\numberwithin{equation}{section}
\let\Re=\undefined\DeclareMathOperator*{\Re}{Re}
\let\Im=\undefined\DeclareMathOperator*{\Im}{Im}
\newcommand{\R}{\mathbb{R}}
\newcommand{\C}{\mathbb{C}}
\newcommand{\Z}{\mathbb{Z}}
\newcommand\LL{\mathcal{L}}
\newcommand\A{\bf A}
\newtheorem{theorem}{Theorem}[section]
\newtheorem{lemma}[theorem]{Lemma}
\newtheorem{proposition}[theorem]{Proposition}
\theoremstyle{definition}
\newtheorem{definition}[theorem]{Definition}
\newtheorem{remark}[theorem]{Remark}
\newcommand{\Extend}[5]{\ext@arrow0099{\arrowfill@#1#2#3}{#4}{#5}}
\begin{document}
\title[Two-particles in Aharonov-Bohm
field]{Dispersive estimates for two-particles Schr\"odinger and wave equations in the Aharonov-Bohm
field }

\author{Xiaofen Gao}
\address{Department of Mathematics, Beijing Institute of Technology, Beijing 100081}
\email{gaoxiaofen@bit.edu.cn}

\author{Junyong Zhang}
\address{Department of Mathematics, Beijing Institute of Technology, Beijing 100081}
\email{zhang\_junyong@bit.edu.cn}

\author{Jiqiang Zheng}
\address{Institute of Applied Physics and Computational Mathematics, Beijing 100088}
\email{zhengjiqiang@gmail.com; zheng\_jiqiang@iapcm.ac.cn}

\begin{abstract}
We study the dispersive behaviors of two-particles Schr\"odinger and wave equations in the Aharonov-Bohm
field. In particular, we
 prove  the Strichartz estimates
for Schr\"odinger and wave equations in this setting. The key point is to construct spectral measure of  Schr\"odinger operator
with an Aharonov-Bohm type potential in $\R^4$.
 As applications, we finally prove a scattering theory for  the nonlinear defocusing subcritical two-particles Schr\"odinger equation with Aharonov-Bohm potential.

\end{abstract}

\maketitle

\begin{center}
 \begin{minipage}{120mm}
   { \small {\bf Key Words:  Aharonov-Bohm potential, Decay estimates,   Strichartz estimate,  Scattering theory.}
      {}
   }\\
    { \small {\bf AMS Classification:}
      { 42B37, 35Q40, 35Q41.}
      }
 \end{minipage}
 \end{center}


\section{Intorduction}
In this paper, we continue our program \cite{CYZ, FZZ, GYZZ} studying the dispersive equations in Aharonov-Bohm field.
 The Aharonov-Bohm magnetic potential reads
\begin{equation}\label{AB}
A_B:\R^2\setminus\{(0,0)\}\to\R^2,
\quad
A_B(x)=\alpha\left(-\frac{x_2}{|x|^2},\frac{x_1}{|x|^2}\right),
\quad
\alpha\in\R,
\quad
x=(x_1,x_2)
\end{equation}
so that the Hamiltonian becomes
\begin{equation}\label{eq:H}
H_A=\left(-i\nabla+\alpha\left(-\frac{x_2}{|x|^2},\frac{x_1}{|x|^2}\right)\right)^2.
\end{equation}
The vector potential $A_B(x)$ generates a $\delta$-type magnetic field
$${ B}=\nabla\times A_B(x)=2\pi \alpha \delta,$$
which was initially studied by Aharonov and Bohm in \cite{AB} to show the significance of electromagnetic vector potentials in quantum mechanics.
The Aharonov-Bohm effect predicted in \cite{AB} is one of the most interesting and intriguing phenomena of quantum physics.
This effect occurs when electrons propagate in a domain with a zero magnetic field but with a nonzero vector potential $A_B$, see \cite{PT89} and the references therein.
In another typical cosmic-string scenarios observed by Alford and Wilczek \cite{AW}, the fermionic charges can be non-integer multiples of the Higgs charges.
As the flux is quantized with respect to the Higgs charge, this will lead to  a non-trivial Aharonov-Bohm scattering of these fermions.
From the mathematical points,
we refer to \cite{FFFP1, FFFP} and references therein for an overview of the spectral theory of this Hamiltonian in Aharonov-Bohm magnetic field. This Hamiltonian model is doubly critical, because of the scaling invariance of the model and the singularities of the potentials, which are not locally integrable.
 In $\R^2$,
a generalization of $H_A$ was considered in \cite{FFFP1, FFFP} in which
the dispersive estimates were proved for Schr\"odinger equation. Recently, in \cite{FZZ,GYZZ}, the authors have
 proved the Strichartz estimates
for wave and Klein-Gordon equations in the Aharonov-Bohm magnetic fields. It worths mentioning that the wave and Klein-Gordon equations are lack of pseudoconformal invariance which plays an important role in \cite{FFFP1} for Schr\"odinger equation, hence we have to use a new method in \cite{FZZ,GYZZ}.
  \vspace{0.2cm}

In the present paper, we consider the Hamiltonian for magnetic many-particles Schr\"odinger operator arising in the Hall effect \cite{J89, L83}. In particular,
we recall  the Hamiltonian for magnetic multi-particle
Dirichlet forms with Aharonov-Bohm type in \cite{HHLT}.
 Let ${\vec x}_j=(x_{j1}, x_{j2})\in\R^2$, $j=1,2,\ldots, N$ and let
\begin{equation}\label{Fj}
{\bf F}_j=\alpha\Big(-\sum_{k\neq j}\frac{{x}_{j2}-{ x}_{k2}}{r_{jk}^2}, \sum_{k\neq j}\frac{{ x}_{j1}-{x}_{k1}}{r_{jk}^2}\Big), \quad
r_{jk}^2=\sum_{i=1}^2 (x_{ji}-x_{ki})^2,
\end{equation}
then the Hamiltonian is given by
\begin{equation}\label{H-N}
H_{A,N}=\sum_{j=1}^N (-i\nabla_j+{\bf F}_j)^2,
\end{equation}
which is singular in the set $$S=\cup_{j\neq k} S_{j,k},$$ where for all $j, k\in\{1,2,\ldots,N\}, j\neq k$,
\begin{equation}\label{sing-set}
S_{j,k}=\{x=(\vec x_1,\vec x_2\ldots, \vec x_N)\in \R^{2N}=\R^{2}\times\R^2\times\ldots\times\R^2:\vec x_j=\vec x_k\}.
\end{equation}
We aim to study the dispersive behaviors of the Schr\"odinger equation
\begin{equation}\label{equ:S}
\begin{cases}
i\partial_t u+H_{A,N} u=0,\qquad &(t,x)\in\R\times\R^{2N}\setminus S,\\
u(0,x)=u_0,\qquad &x\in\R^{2N},
\end{cases}
\end{equation}
and the wave equation
\begin{equation}\label{equ:w}
\begin{cases}
\partial_{tt} u+H_{A,N} u=0,\qquad &(t,x)\in\R\times\R^{2N}\setminus S,\\
u(0,x)=f,\quad \partial_tu(0,x)=g,\qquad &x\in\R^{2N}.
\end{cases}
\end{equation}
The research on the dispersive and Strichartz estimates of the wave and Schr\"odinger propagators with one single potential has a long history, we refer to \cite{RS} and
the survey \cite{S} and the references therein.
From the physical perspective, it is natural to ask whether similar estimates hold in the presence of interaction potentials.
In the $N$-particles interaction case, there is much fewer result than single case. It is known that the local-in-time dispersive and Strichartz estimates
follow from the kernel in  \cite{Fu} with a large class of potential interactions. Recently, Hong \cite{H} has obtained the global-in-time
Strichartz estimates but with ``small" interaction potentials. Chong, Grillakis, Machedon and Zhao \cite{CGMZ} proved the global-in-time Strichartz estimates
for the equation $\big(i\partial_t-\Delta_x-\Delta_y+\frac1{N} V_N(x-y)\big) u(t,x,y)=F$ in mixed coordinates norm $L^q_t(\R;L^r_x L^2_y(\R^{n}\times\R^n))$.
To the best of our knowledge, in the $N$-particles case, there is no result about the decay and Strichartz estimates for \eqref{equ:S} and \eqref{equ:w}.
The  two main obstacles to prove the decay estimates arise from the scaling critical Aharonov-Bohm potential and interactions between the many particles.
Fortunately, in the special two-particles case $N=2$, which is the simplest case, the Hamiltonian $H_{A,2}$ can be reduced to a one-particle Hamiltonian $\LL_{\A}$ in \eqref{LA} below but with more singular
Aharonov-Bohm type potential in higher dimensions. Because of this, we can obtain the decay estimates and Strichartz estimates in
the two-particles case.

\vspace{0.2cm}

Our main results about the Strichartz estimates are as follows.
\begin{theorem}\label{thm:stri-S}
Let $N=2$ and let
$u(t,x)$ be a solution of Schr\"odinger equation \eqref{equ:S}, then there exists a constant $C$ such that
\begin{equation}\label{stri}
\|u(t,x)\|_{L^{q}_{t}(\R;L_x^r(\mathbb{R}^4))}\leq C\|u_0\|_{L^2_x(\mathbb{R}^4)}
\end{equation}
where
\begin{equation}\label{adm}
(q,r)\in\Lambda^S_0:=\Big\{(q,r)\in[2,\infty]\times[2,\infty):\tfrac2q=4\big(\tfrac12-\tfrac1r\big)\Big\}.
\end{equation}

\end{theorem}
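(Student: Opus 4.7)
The strategy is the standard three-step reduction: (i) convert the two-particle Hamiltonian $H_{A,2}$ into a one-particle operator $\LL_{\A}$ on $\R^4$; (ii) prove a pointwise dispersive decay for the associated Schr\"odinger propagator; and (iii) pass to Strichartz norms via the abstract $TT^*$ argument of Keel and Tao, including the endpoint.

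For step (i), I would introduce the orthogonal change of coordinates $u=(\vec x_1-\vec x_2)/\sqrt 2$ and $v=(\vec x_1+\vec x_2)/\sqrt 2$ in $\R^2\times\R^2$. Since ${\bf F}_1=-{\bf F}_2$ and each depends only on $\vec x_1-\vec x_2$, a direct expansion of $(-i\nabla_1+{\bf F}_1)^2+(-i\nabla_2+{\bf F}_2)^2$ in the new variables makes the cross terms collapse and yields
\begin{equation*}
\LL_{\A} \;=\; -\Delta_v + \Bigl(-i\nabla_u + \tfrac{\alpha}{\sqrt 2}\Bigl(-\tfrac{u_2}{|u|^2},\tfrac{u_1}{|u|^2}\Bigr)\Bigr)^2.
\end{equation*}
Equivalently, $\LL_{\A}=(-i\nabla+\A)^2$ on $\R^4=\R^2_u\times\R^2_v$ with a vector potential $\A(u,v)=\bigl(\tfrac{\alpha}{\sqrt 2}\, Ju/|u|^2,\,0\bigr)$ singular along the codimension-two plane $\{u=0\}$, exactly an Aharonov--Bohm type potential in $\R^4$.

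For step (ii), I would construct the spectral measure of $\LL_{\A}$ by exploiting the separation of variables: since $-\Delta_v$ and the pure $\R^2_u$ Aharonov--Bohm operator $L_u$ commute, the Schr\"odinger kernel of $\LL_{\A}$ factorises as the product of the free $\R^2_v$ kernel, which decays like $|t|^{-1}$, with the Aharonov--Bohm kernel on $\R^2_u$. For the latter, polar coordinates $u=\rho(\cos\theta,\sin\theta)$ diagonalise the angular part via the Fourier basis $\{e^{ik\theta}\}_{k\in\Z}$, and the radial problem on each mode reduces to a Bessel operator of shifted order $|k+\alpha|$. The spectral resolution is then a Hankel-type transform, and the resulting oscillatory integral representation of $e^{-itL_u}$ can be analysed by stationary phase to produce the sharp $|t|^{-1}$ decay uniformly in $k$. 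Multiplying the two factor kernels yields
\begin{equation*}
\bigl\|e^{-itH_{A,2}}\bigr\|_{L^1(\R^4)\to L^\infty(\R^4)}\;\lesssim\;|t|^{-2}.
\end{equation*}
For step (iii), I would combine this dispersive estimate with the trivial $L^2$ conservation and apply the Keel--Tao endpoint Strichartz theorem with effective dimension $n=4$. This delivers the full admissible range $\Lambda_0^S$ in \eqref{adm}, endpoint $(q,r)=(2,4)$ included.

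The main obstacle is step (ii): producing the spectral measure of $\LL_{\A}$ in a form explicit enough to extract uniform pointwise bounds for the propagator. The delicate point is that the Bessel order $|k+\alpha|$ is a noninteger shift of $k$ whenever $\alpha\notin\Z$, which forces estimates on Hankel transforms of every order simultaneously, with constants independent of $k$. Summing the resulting contributions over the angular modes to reconstruct the $\R^4$ kernel, and controlling the oscillatory integrals by a stationary-phase argument that respects the codimension-two singular locus of $\A$, is the precise technical content promised in the abstract of the paper.
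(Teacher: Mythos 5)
Your overall architecture --- reduce $H_{A,2}$ to a one--particle Aharonov--Bohm type operator on $\R^4$ via the rotated coordinates, prove the $L^1\to L^\infty$ bound $|t|^{-2}$ for its propagator, and conclude by the Keel--Tao theorem (endpoint included, since the decay exponent $2>1$) --- is exactly the paper's. Two remarks on step (i): the flux constant of the reduced operator is $\alpha$, not $\alpha/\sqrt2$; the $1/\sqrt2$ you picked up from ${\bf F}_1$ written in the variable $u$ is compensated when the overall factor $\tfrac1{\sqrt2}$ from $\nabla_{\vec x_j}=\tfrac1{\sqrt2}(\nabla_u\pm\nabla_v)$ is pulled out of the square. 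This is harmless, since the estimate holds for every real flux. Your tensor--product observation, that $e^{it\LL_{\A}}$ factorises as the free two--dimensional propagator in $v$ times the two--dimensional Aharonov--Bohm propagator in $u$, is correct and is in effect what the paper does: its kernel formula \eqref{kernel:S} is precisely such a product.

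The gap is in step (ii), and it is the heart of the matter. A stationary--phase bound on each angular mode that is merely \emph{uniform in $k$} does not yield a bound on the kernel of the two--dimensional Aharonov--Bohm propagator: that kernel is $\sum_{k\in\Z}e^{-ik(\theta-\bar\theta)}$ times a Hankel--transform piece of order $|k+\alpha|$, and summing infinitely many terms each of size $O(|t|^{-1})$ gives nothing. What is actually needed --- and what the paper does, following its earlier works --- is to sum the series in closed form: after the Weber identity each mode produces a modified Bessel function $I_{|k+\alpha|}$, and inserting its integral representation
\begin{equation*}
I_\nu(z)=\frac1\pi\int_0^\pi e^{z\cos s}\cos(\nu s)\,\mathrm{d}s-\frac{\sin(\nu\pi)}{\pi}\int_0^\infty e^{-z\cosh s}e^{-\nu s}\,\mathrm{d}s
\end{equation*}
allows the sum over $k$ to be evaluated explicitly, yielding the geometric term with the bounded factor $A_\alpha(\theta,\bar\theta)$ and the diffractive term with $B_\alpha(s,\theta,\bar\theta)$ satisfying $\int_0^\infty|B_\alpha|\,\mathrm{d}s\le C$; only then does the $|t|^{-2}$ kernel bound, and hence Keel--Tao, apply. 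Your proposal names this summation as ``the precise technical content'' but supplies no mechanism for it, and the mechanism you hint at (per--mode bounds with constants independent of $k$) would fail as stated. Alternatively, you could close this step by quoting the known $|t|^{-1}$ decay of the planar Aharonov--Bohm Schr\"odinger flow due to Fanelli, Felli, Fontelos and Primo --- whose proof rests on exactly this closed--form kernel --- and then multiply by the free kernel in the $v$ variables; as written, the proposal does neither.
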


\begin{remark} By using dual argument, the endpoint Strichartz estimates \eqref{stri} with $(q,r)=(2,4)$ gives
the inhomogeneous Strichartz estimates
$$\Big\|\int_0^t e^{i(t-s) H_{A,2}}F(s) ds\Big\|_{L^2_t(\R;L^4_x(\R^4))} \leq C\|F\|_{L^2_t(\R;L^{\frac43}_x(\R^4))}.$$
Then by using the argument in \cite[Remark 8.8]{HZ}, we can obtain the uniform resolvent estimate
\begin{equation}
\|(H_{A,2}-z)^{-1}\|_{L^{\frac43}_x(\R^4)\to L^{4}_x(\R^4)}\leq C,
\end{equation}
where the constant $C$ is independent of $z\in\C\setminus\R^+$.
\end{remark}

\begin{remark}
It would be interesting to generalize the result to $N$-particles with $N\geq3$, but the method of this paper is not available.
Compared with the result of \cite{H}, here we consider the scaling critical magnetic interaction potentials and the interaction between potentials may be larger.
In constrast to the Strichartz estimates of \cite{CGMZ}, the dispersive and Strichartz estimates \eqref{stri} are available for the norm of $L^{q}_{t}(\R;L_x^r(\mathbb{R}^4))$ but not  for the mixed norm of $L^{q}_{t}(\R;L^r(\mathbb{R}^2)L^2(\R^2))$.\end{remark}

\begin{theorem}\label{thm:stri-w}
Let $N=2$ and let
$u(t,x)$ be a solution of wave equation \eqref{equ:w}, then there exists a constant $C$ such that
\begin{equation}\label{stri-w}
\|u(t,x)\|_{L^{q}_{t}(\R;L_x^r(\mathbb{R}^4))}\leq C\Big(\|f\|_{\dot{H}^s(\mathbb{R}^4)}+\|g\|_{\dot{H}^{s-1}(\mathbb{R}^4)}\Big)
\end{equation}
where $s\geq0$ and
\begin{equation}\label{adm-w}
(q,r)\in\Lambda^w_s:=\Big\{(q,r)\in[2,\infty]\times[2,\infty):\tfrac2q\leq 3\big(\tfrac12-\tfrac1r\big), s=4\big(\tfrac12-\tfrac1r\big)-\tfrac1q\Big\}.
\end{equation}

\end{theorem}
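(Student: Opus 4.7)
The plan is to deduce Theorem \ref{thm:stri-w} from a microlocalized dispersive estimate for the half-wave propagator $e^{\pm it\sqrt{H_{A,2}}}$, obtained in turn from the spectral measure of $\LL_\A$ constructed elsewhere in this paper, followed by the standard endpoint Strichartz machinery of Keel--Tao.

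First I would reduce the two-particle Hamiltonian $H_{A,2}$ to a one-particle operator on $\R^4$. Introducing center-of-mass and relative coordinates $z = (\vec x_1 + \vec x_2)/\sqrt{2}$ and $y = (\vec x_1 - \vec x_2)/\sqrt{2}$, a direct computation shows that $H_{A,2}$ transforms unitarily to $\LL_\A$ on $\R^4_{(y,z)}$, with singular set $\{y=0\}$. The d'Alembert formula $u(t) = \cos(t\sqrt{H_{A,2}}) f + \sin(t\sqrt{H_{A,2}}) (\sqrt{H_{A,2}})^{-1} g$ then reduces matters to bounds for the half-wave propagator with data in $\dot H^s$.

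Next, using the spectral measure $dE_{\sqrt{H_{A,2}}}(\lambda)$ (whose kernel is analyzed in the body of the paper), I would microlocalize in frequency by a standard Littlewood--Paley partition $\phi\in C_c^\infty((1/2,2))$ and consider
$$
\bigl[e^{it\sqrt{H_{A,2}}} \phi\bigl(2^{-k}\sqrt{H_{A,2}}\bigr)\bigr](x,x') = \int_0^\infty e^{it\lambda} \phi(2^{-k}\lambda)\, dE_{\sqrt{H_{A,2}}}(\lambda)(x,x').
$$
Stationary/non-stationary phase in this oscillatory integral should yield the dyadic dispersive bound
$$
\bigl\|e^{it\sqrt{H_{A,2}}} \phi\bigl(2^{-k}\sqrt{H_{A,2}}\bigr)\bigr\|_{L^1(\R^4) \to L^\infty(\R^4)} \leq C\, 2^{5k/2} \bigl(1 + 2^k|t|\bigr)^{-3/2},
$$
which is the $\R^4$ analogue of the classical half-wave decay at rate $(n-1)/2=3/2$. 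Combined with the trivial $L^2\to L^2$ bound from the spectral theorem and Riesz--Thorin interpolation, this gives frequency-localized $L^{r'}\to L^r$ bounds; feeding these into the abstract Keel--Tao endpoint machinery produces, for every wave-admissible pair $(q,r)$ with $2/q \leq 3(1/2-1/r)$, the dyadic estimate
$$
\bigl\|e^{it\sqrt{H_{A,2}}} \phi\bigl(2^{-k}\sqrt{H_{A,2}}\bigr) f\bigr\|_{L^q_t L^r_x} \leq C\, 2^{ks}\|f\|_{L^2_x}, \qquad s = 4\bigl(\tfrac12-\tfrac1r\bigr) - \tfrac1q.
$$
Summing in $k\in\Z$ via a square-function characterization of $\dot H^s$ for $H_{A,2}$, which is a consequence of a Mikhlin-type spectral multiplier theorem for $\LL_\A$ (itself following from Gaussian-type heat kernel bounds), yields \eqref{stri-w}.

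The hard part will be the microlocalized dispersive estimate. The Aharonov-Bohm vector potential of the two-particle problem is scaling-critical and singular along the codimension-two collision set $\{\vec x_1 = \vec x_2\}\subset\R^4$, so the associated spectral measure lacks the smoothness available in the free or short-range setting, and the phase analysis has to be carried out with some care. Once the detailed kernel representation of $dE_{\sqrt{H_{A,2}}}(\lambda)$ (already constructed in the paper for the Schr\"odinger estimate of Theorem \ref{thm:stri-S}) is in hand, a stationary phase analysis adapted to the half-wave phase $\lambda\mapsto t\lambda$ should deliver the required decay, after which the Keel--Tao and Littlewood--Paley steps are entirely standard.
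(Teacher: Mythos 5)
Your proposal follows essentially the same route as the paper's proof: reduce $H_{A,2}$ to the one-particle operator $\LL_{\A}$ on $\R^4$, use the spectral-measure kernel (Proposition \ref{prop:spect}) and (non)stationary phase to get a frequency-localized dispersive estimate, run Keel--Tao on each dyadic block, and sum using the heat-kernel-based Littlewood--Paley/Sobolev-norm-equivalence theory for $\LL_{\A}$. The only slip is the normalization of the dyadic dispersive bound: the correct estimate is $2^{\frac52 k}(2^{-k}+|t|)^{-\frac32}$, i.e. $2^{4k}(1+2^k|t|)^{-\frac32}$, not $2^{\frac52 k}(1+2^k|t|)^{-\frac32}$ (which is false at small times), but this does not affect the rest of your argument since the dyadic Strichartz exponent $s=4\big(\tfrac12-\tfrac1r\big)-\tfrac1q$ you state corresponds to the correct bound.
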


 By the change of variables
\begin{equation}\label{equ:y12x12cha}
{\vec y}_1=\frac{\vec x_1-\vec x_2}{\sqrt{2}},\quad {\vec y}_2=\frac{\vec x_1+\vec x_2}{\sqrt{2}},
\end{equation}
and denoting $y=(\vec{y}_1,\vec{y}_2)\in\R^4$ and
$$v(\vec{y}_1,\vec{y}_2)=u\big(\tfrac{\vec{y}_1+\vec{y}_2}{\sqrt{2}},-\tfrac{\vec{y}_1-\vec{y}_2}{\sqrt{2}}\big)\quad\text{or}\quad
u(\vec{x}_1,\vec{x}_2)=v(\vec{y}_1,\vec{y}_2),$$
then, we derive
\begin{align}\label{equ:changes-HL}
  H_{A,2} u(\vec x_1, \vec x_2)=&
  \LL_{\A} v(\vec y_1, \vec y_2),
\end{align}
 where
\begin{equation*}
\LL_{{\A}}=\Big(-i\nabla_y+{\A}(y)\Big)^2, \quad {\A}(y)=\alpha\Big(\frac {-y_{12}}{|y_{11}|^2+|y_{12}|^2},\frac {y_{11}}{|y_{11}|^2+|y_{12}|^2},0,0\Big).
\end{equation*}
Hence,
\begin{align}\label{equ:HAB}
  \big\| e^{itH_{A,2}}u_0(x)\big\|_{L^{q}_{t}(\R;L_x^r(\mathbb{R}^4))}\simeq & \big\| e^{it\LL_{{\A}}}v_0(y)\big\|_{L^{q}_{t}(\R;L_y^r(\mathbb{R}^4))}.
\end{align}
Thus, we can reduce to consider one-particle operator $\LL_{\A}$.

As applications, we further
 study the  nonlinear subcritical Schr\"odinger equation
\begin{equation}\label{NLS}
\begin{cases}
i\partial_t u+H_{A,2} u=|u|^{p-1}u,\qquad (t,x)\in\R\times(\R^4\setminus S),\\
u(0,x)=u_0\in H_A^1(\R^4),\qquad x\in\R^4.
\end{cases}
\end{equation}
By changing variables \eqref{equ:y12x12cha}, it is equivalent to consider the nonlinear Schr\"odinger equation
 with Aharonov-Bohm potential in 4D
 \begin{equation}\label{equ:nlsab4}
\begin{cases}
i\partial_t u+\LL_{{\A}} u=|u|^{p-1}u,\qquad (t,x)\in\R\times(\R^4\setminus\{0,0, x_3, x_4\}),\\
u(0,x)=u_0\in H^1_{\bf A}(\R^4),\qquad x\in\R^4,
\end{cases}
\end{equation}
where $u:\;\R\times\R^4\to\C,\;2<p<3,$ and $\LL_{{\A}}=\big(-i\nabla_x+{\A}(x)\big)^2.$\vspace{0.2cm}

 We aim to establish the scattering theory for the nonlinear Schr\"odinger equation \eqref{equ:nlsab4}. In particular, when ${\A}=0$, there are rich results about scattering theory for the nonlinear Schr\"odinger equation. For the defocusing and subcritical nonlinear Schr\"odinger cases, Ginibre-Velo \cite{GV} proved the scattering theory  in  energy space $H^1(\R^n)$ with $n\geq3$ by using the classical Morawetz estimate. Later,
in \cite{CKSTT}, the authors developed a powerful interaction Morawetz estimate and studied the scattering theory for the cubic defocusing Schr\"odinger equation in a space with regularity less than energy space. The  interaction Morawetz estimate reads
\begin{equation}\label{int-Mor}
\big\||\nabla|^{-\frac{d-3}{4}}u\big\|_{L_{t,x}^4}^4\leq C \|u(0)\|^2_{L^2_x}\|u(t)\|^2_{L_t^\infty\dot{H}^{\frac12}},
\end{equation}
if $u$ solves a defocusing nonlinear Schr\"odinger equation without potentials. The interaction Morawetz estimate \eqref{int-Mor} is so powerful
that the authors of \cite{CKSTT07} proved the scattering theory for energy-critical defocusing Schr\"odinger and the authors of \cite{TVZ} presented a different proof of the scattering for energy subcritical cases which simplified the argument of \cite{GV}. \vspace{0.1cm}

In the proof of the scattering theory of nonlinear Schr\"odinger equation,  the Strichartz estimates and interaction Morawetz  estimates are two fundamental tools. For our purpose of studying the long time behavior of solution to \eqref{equ:nlsab4}, in spirit of the papers mentioned above, it is natural to establish the Strichartz and the interaction Morawetz estimates for the linear and nonlinear Schr\"odinger equations \eqref{equ:nlsab4} respectively.
However, the Aharonov-Bohm magnetic potential is scaling-critical and the perturbation is non-trivial, so
the situation becomes more complicated. In $\R^4$,  the magnetic field is $B=\text{curl}{\A}$ and the trapping component $B_\tau=\frac{x}{|x|}\wedge B$ is an obstruction to the dispersion which was observed by \cite{AFVV, FV}. Fortunately, the trapping component $B_\tau=0$ which is generated by the special potential ${\A}$ in \eqref{p:A}. Hence we can obtain the classical Morawetz estimates from the virial identities
derived by \cite{FV}. While for interaction Morawetz estimates, one needs a positivity of the quantity like $B(x)\frac{x-y}{|x-y|}$
which seems impossible for any $y$.
Colliander et.al in \cite{CCL} proved the interaction Morawetz estimates for the defocusing nonlinear Schr\"odinger with magnetic potentials. A key step is to use the method of \cite{FV} to bound the quantity involving $B(x)\frac{x-y}{|x-y|}$ when the decay of the magnetic potential ${\A}$
is fast enough. However, for ${\A}$ considered here, the decay is not enough to satisfy their assumption
\cite[(1.38)]{CCL,FV}, therefore the method breaks down.  The authors in \cite{ZZ18} obtained the interaction Morawetz estimate for Schr\"odinger with  Aharonov-Bohm potential in 2D,
but assumed that the solution is radial. In 2D, ${\A}\cdot\nabla f=0$  if $f$ is a radial function.
 Hence, at present, we have to study the scattering theory via the classical Morawetz estimate by following the method in Ginibre-Velo \cite{GV} .

 \vspace{0.2cm}

Now we state our main theorems.
\begin{definition}
Let $\dot{H}^1_{{\A}}(\R^4)$ be the completion of $C_c^\infty(\R^4\setminus\{0\})$ with respect to the norm
\begin{equation}\label{norm1}
\|f\|_{\dot{H}^1_{{\A}}(\R^4)}:=\Big(\int_{\R^4}|\nabla_{{\A}} f(x)|^2\mathrm{d}x\Big)^{\frac12},
\end{equation}
where $\nabla_{\A}:=-i\nabla+{\A}(x)$. Then the inhomogenous space is
\begin{equation}\label{norm2}
H^1_{{\A}}(\R^4)=\dot{H}^1_{{\A}}(\R^4)\cap L^2(\R^4).
\end{equation}
\end{definition}

\begin{theorem}
\label{thm:scatter}
Let $u_0(x)\in H^1_{{\A}}(\R^4)$. Then, there exists a unique global solution $u\in C(\R,H^1_{{\A}}(\R^4))$
to \eqref{equ:nlsab4}. Moreover, the solution
$u$ scatters in the sense that there exists $u_\pm\in H^1_{{\A}}(\R^4)$ such that
\begin{equation}\label{equ:sctt}
  \lim_{t\to\pm\infty}\big\|u(t,\cdot)-e^{it\mathcal{L}_{{\A}}}u_\pm\big\|_{H^1_{{\A}}(\R^4)}=0.
\end{equation}
\end{theorem}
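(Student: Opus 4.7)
The plan is to follow the classical Ginibre--Velo scheme \cite{GV}, adapted to the magnetic setting, using the Strichartz estimates of Theorem~\ref{thm:stri-S} together with a classical Morawetz estimate made available by the vanishing of the trapping component $B_\tau$. Throughout, one passes freely between $H_{A,2}$ on $\R^4$ and $\LL_{\A}$ on $\R^4$ via the change of variables \eqref{equ:y12x12cha}, and works directly on \eqref{equ:nlsab4}.

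First, I would establish local well-posedness in $H^1_{{\A}}(\R^4)$. The admissible pair $(q,r)=(2,4)$ from Theorem~\ref{thm:stri-S} is the natural endpoint in dimension four, and since $\LL_{{\A}}$ commutes with its own functional calculus, the Strichartz inequality \eqref{stri} upgrades to
\begin{equation*}
\big\|e^{it\LL_{{\A}}}u_0\big\|_{L^q_t(\R;W^{s,r}_{{\A}}(\R^4))}\leq C\|u_0\|_{H^s_{{\A}}(\R^4)},
\end{equation*}
where $W^{s,r}_{{\A}}$ is defined via $\LL_{{\A}}^{s/2}$. Combined with the diamagnetic inequality $|\nabla|u|\,|\leq|\nabla_{{\A}}u|$ (which is available since $\A\in L^2_{\mathrm{loc}}$ away from the singular set) and the Sobolev embedding $\dot{H}^1(\R^4)\hookrightarrow L^4(\R^4)$, one obtains $\|u\|_{L^4}\leq C\|u\|_{\dot H^1_{\A}}$. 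A standard contraction mapping argument on
\begin{equation*}
\Phi(u)(t)=e^{it\LL_{{\A}}}u_0-i\int_0^t e^{i(t-s)\LL_{{\A}}}\bigl(|u|^{p-1}u\bigr)(s)\,ds
\end{equation*}
in a ball of a suitable Strichartz-type space then gives a unique local solution for $2<p<3$ (the problem is energy-subcritical in $\R^4$).

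Second, I would verify conservation of the mass $M(u)=\|u\|_{L^2}^2$ and the energy
\begin{equation*}
E(u)=\tfrac12\int_{\R^4}|\nabla_{{\A}}u|^2\,dx+\tfrac{1}{p+1}\int_{\R^4}|u|^{p+1}\,dx.
\end{equation*}
Both quantities are controlled a priori by $\|u_0\|_{H^1_{{\A}}}$ thanks to the Sobolev embedding above, so $\|u(t)\|_{H^1_{{\A}}}$ stays bounded uniformly in $t$, and the local theory extends to a global-in-time solution $u\in C(\R;H^1_{{\A}}(\R^4))$.

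Third comes the decisive step, the classical Morawetz inequality. Since ${\A}$ in \eqref{equ:nlsab4} has the very special form in which the trapping component satisfies $B_\tau=\tfrac{x}{|x|}\wedge B=0$ (as already noted in the paragraph above the theorem), the virial computation of Fanelli--Vega \cite{FV} applied with weight $a(x)=|x|$ yields, after multiplication by the conservation laws, a bound of the form
\begin{equation*}
\int_\R\!\!\int_{\R^4}\frac{|u(t,x)|^{p+1}}{|x|}\,dx\,dt\leq C\bigl(\|u_0\|_{L^2},\,\|u_0\|_{H^1_{{\A}}}\bigr).
\end{equation*}
From this Morawetz bound I would deduce, by the standard Ginibre--Velo decomposition of $\R$ into dyadic time intervals and the non-negativity of the magnetic potential term, that $u\in L^{\tilde q}_tL^{\tilde r}_x$ for some admissible pair with $\tilde r>p+1$. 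Feeding this back into the Strichartz estimate \eqref{stri} on $[T,\infty)$ with $T$ large gives a smallness of $\|u\|_{L^q_tL^r_x([T,\infty)\times\R^4)}$ for every admissible pair, which is exactly the finite-spacetime condition needed to prove that $e^{-it\LL_{{\A}}}u(t)$ is Cauchy in $H^1_{{\A}}$, yielding the scattering states $u_\pm$ in \eqref{equ:sctt}.

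The main obstacle I anticipate is the Morawetz step: although $B_\tau=0$ removes the main obstruction identified by Fanelli--Vega, the scaling-critical singularity of ${\A}$ at $\{y_{11}=y_{12}=0\}\subset\R^4$ means one cannot directly quote the results of \cite{CCL,FV}; one has to redo the virial computation carefully, handling the boundary terms coming from the singular set (approximation by $C_c^\infty(\R^4\setminus S)$ functions, as in the definition \eqref{norm1}) and verifying that the magnetic cross terms have a favorable sign or can be absorbed. The subcritical Sobolev embedding for $\dot H^1_{\A}$ and the passage from classical Morawetz to global Strichartz smallness are by now routine, so the heart of the argument really lies in pushing the Fanelli--Vega virial identity through for our specific $\A$.
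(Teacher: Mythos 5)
Your first three steps (local well-posedness by contraction with the Strichartz estimates and the equivalence of magnetic and ordinary Sobolev norms, conservation of mass and energy giving globality, and the classical Morawetz bound \eqref{equ:morest} obtained from the Fanelli--Vega virial identity with $a(x)=|x|$, exploiting $B_\tau=0$) coincide with the paper's route (Propositions \ref{localwell}, \ref{prop:morest}, Theorem \ref{thm:viride}). The genuine gap is in the final step, where you assert that the Morawetz bound, ``by the standard Ginibre--Velo decomposition of $\R$ into dyadic time intervals,'' directly yields $u\in L^{\tilde q}_tL^{\tilde r}_x$ for an admissible pair with $\tilde r>p+1$. This does not work as stated: because of the weight $1/|x|$, the estimate $\int_\R\int_{\R^4}|u|^{p+1}|x|^{-1}\,dx\,dt<\infty$ gives no information in the region where $|x|$ is large compared with $t$, which is exactly where the solution may carry its potential energy, and no dyadic splitting in time alone can recover an unweighted spacetime norm from it. (The phrase ``non-negativity of the magnetic potential term'' does not supply the missing control either.)

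What is actually needed -- and what the paper proves in Proposition \ref{prop:pedecay} -- is the decay of the potential energy, $\|u(t)\|_{L^{p+1}_x}\to0$ as $t\to\pm\infty$, and this requires several ingredients beyond the Morawetz bound: (i) a truncated-mass (almost finite speed of propagation) argument with the cutoff $\theta_M$, $M=t\log t$, showing that the mass in $\{|x|>t\log t\}$ tends to zero, so that Morawetz, which only controls $\{|x|\le s\log s\}$, becomes relevant; (ii) the $L^1\to L^\infty$ dispersive estimate of Proposition \ref{Dispersive} used through a three-term Duhamel splitting $u=v+w+z$ to convert the time-averaged smallness from Morawetz into pointwise-in-time smallness of $\|u(t)\|_{L^{p+1}}$ on some interval; and (iii) a bootstrap/continuity argument propagating this smallness to all large times. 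Only after this decay is established does the continuity argument with Strichartz (Proposition \ref{prop:glostb}) give global spacetime bounds in $L^\gamma_tH^{1,\rho}_{\A}$ and hence the Cauchy property of $e^{-it\LL_{\A}}u(t)$ in $H^1_{\A}$. Your sketch omits this entire mechanism, so as written the passage from \eqref{equ:morest} to scattering is unjustified; by contrast, your worry about redoing the virial computation near the singular set is comparatively minor, since the paper handles it by quoting the commutator identity of \cite{FV} and computing only the nonlinear term.
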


As mentioned above, in order to establish a scattering theory, we have to prove Strichartz estimates for our model. Fanelli-Vega \cite{FV} obtained the Strichartz estimates  for the wave equation with a non-trapping electromagnetic potential with almost Coulomb decay by establishing a family of virial-type identities for $n\geq3$. However, the method is not applicable to magnetic Schr\"odinger equation because they only proved the weak dispersive estimate hold in $\dot{\mathcal{H}}^{\frac12}$ instead of $L^2$.
Later in \cite{AFVV}, they proved the Strichartz estimates for Schr\"odinger when the potentials are almost critical which do not
include the potential \eqref{p:A}. Their method can prove the endpoint Strichartz estimates with derivatives,
but can not cover the usual endpoint Strichartz estimates without derivatives, see \cite[(1.13), (1.14)]{AFVV}.

To achieve the purpose of scattering theory in Theorem \ref{thm:scatter}, we need the chain rules associated with $\LL_{\A}$ for
differential operators of non-integer order, hence we can derive the desired nonlinear estimates. It is well known that
 the traditional Littlewood-Paley theory gives the proof of Leibniz (=product) and chain rules for
differential operators of non-integer order.  For example, if $1<p<\infty$ and $s>0$, then
$$
\| f g \|_{H^{s,p}(\R^n)} \lesssim \| f \|_{H^{s,p_1}(\R^n)} \| g \|_{L^{p_2}(\R^n)} + \| f \|_{L^{p_3}(\R^n)}\| g \|_{H^{s,p_4}(\R^n)}
$$
whenever $\frac1p=\frac1{p_1}+\frac1{p_2}=\frac1{p_3}+\frac1{p_4}$.  For a textbook presentation of these theorems and original references, see \cite{Taylor:Tools}. Our strategy is to prove a  boundedness of generalized Riesz transform in $L^p$
\begin{equation}\label{riesz1}
  \big\|(-\Delta)^\frac{s}{2}\mathcal{L}_{\A}^{-\frac{s}{2}}f\big\|_{L^p(\R^4)}\leq C\|f\|_{L^p(\R^4)}
\end{equation}
and reversed Riesz transform
\begin{equation}\label{riesz1'}
  \big\|\mathcal{L}_{\A}^{\frac{s}{2}}(-\Delta)^{-\frac{s}{2}}f\big\|_{L^p(\R^4)}\leq C\|f\|_{L^p(\R^4)}.
\end{equation}
Hence we obtain
\begin{equation}\label{equ:esonorm1-1}
  \big\|(-\Delta)^\frac{s}{2}f\big\|_{L^p(\R^4)}\simeq \big\|\mathcal{L}_{\A}^\frac{s}{2}f\big\|_{L^p(\R^4)}.
\end{equation}
To prove these, we need a boundedness of heat kernel.
\begin{theorem}\label{thm:heat}
Let $\mathcal{L}_{{\A}}$ be in \eqref{LA}. Then there exists a constant $C$ such that
\begin{equation}\label{equ:heatlA}
  \big|e^{-t\mathcal{L}_{\A}}(x,y)\big|\leq C t^{-2}e^{-\frac{|x-y|^2}{4t}},\quad t>0.
\end{equation}
\end{theorem}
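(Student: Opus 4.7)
The plan is to exploit the tensor-product structure of the operator. Observe that the vector potential $\A(y)$ acts nontrivially only in the first two coordinates $\vec y_1=(y_{11},y_{12})$ and vanishes in $\vec y_2=(y_{13},y_{14})$. Moreover, in the $\vec y_1$-variables it coincides exactly with the two-dimensional Aharonov-Bohm potential $A_B$ in \eqref{AB}, while in $\vec y_2$ the operator is just $-\Delta_{\vec y_2}$. Consequently
\begin{equation*}
\mathcal{L}_{\A}=H_A\otimes I+I\otimes(-\Delta_{\vec y_2}),
\end{equation*}
where $H_A$ is the 2D Aharonov-Bohm Hamiltonian from \eqref{eq:H}, and the two summands act on disjoint groups of variables and hence commute on a joint core. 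Therefore the heat semigroup factorizes,
\begin{equation*}
e^{-t\mathcal{L}_{\A}}=e^{-tH_A}\otimes e^{t\Delta_{\vec y_2}},
\end{equation*}
and the integral kernel on $\R^4=\R^2_{\vec y_1}\times\R^2_{\vec y_2}$ is the product
\begin{equation*}
e^{-t\mathcal{L}_{\A}}(y,y')=e^{-tH_A}(\vec y_1,\vec y_1')\cdot e^{t\Delta_{\vec y_2}}(\vec y_2,\vec y_2').
\end{equation*}

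Next I would bound each factor separately. For the free Euclidean factor the bound is explicit:
\begin{equation*}
e^{t\Delta_{\vec y_2}}(\vec y_2,\vec y_2')=\frac{1}{4\pi t}\exp\!\Bigl(-\frac{|\vec y_2-\vec y_2'|^2}{4t}\Bigr).
\end{equation*}
For the 2D Aharonov-Bohm factor I would invoke the diamagnetic (Kato--Simon) inequality: since the magnetic field associated to $A_B$ satisfies the conditions needed for the classical pointwise Feynman--Kac domination, one has
\begin{equation*}
\bigl|e^{-tH_A}(\vec y_1,\vec y_1')\bigr|\le e^{t\Delta_{\vec y_1}}(\vec y_1,\vec y_1')=\frac{1}{4\pi t}\exp\!\Bigl(-\frac{|\vec y_1-\vec y_1'|^2}{4t}\Bigr).
\end{equation*}
Multiplying and using $|\vec y_1-\vec y_1'|^2+|\vec y_2-\vec y_2'|^2=|y-y'|^2$ yields
\begin{equation*}
\bigl|e^{-t\mathcal{L}_{\A}}(y,y')\bigr|\le\frac{1}{(4\pi t)^2}\exp\!\Bigl(-\frac{|y-y'|^2}{4t}\Bigr),
\end{equation*}
which is \eqref{equ:heatlA}.

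The only delicate ingredient is the diamagnetic bound for the genuinely singular Aharonov-Bohm potential, since $A_B$ is not locally $L^2$ and the standard Feynman--Kac / Simon proof needs to be verified in this setting. I expect this to be the main obstacle; I would either cite the explicit closed-form expression for $e^{-tH_A}(\vec y_1,\vec y_1')$ available in the literature (which can be read off from the spectral decomposition of $H_A$ in polar coordinates in terms of Bessel functions $J_{|m+\alpha|}$) and check the pointwise domination by inspection, or justify the diamagnetic inequality through approximation of $A_B$ by smooth, compactly supported potentials and a limiting argument that uses the strong resolvent convergence of the regularizations to $H_A$ on the appropriate Friedrichs domain. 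Once either route is secured, the tensorization step above is essentially automatic and completes the proof.
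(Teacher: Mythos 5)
Your tensorization argument is correct in outline, but it follows a genuinely different route from the paper. The paper does not factor the operator: it obtains \eqref{equ:heatlA} by analytic continuation of the explicit Schr\"odinger kernel already constructed in Proposition \ref{prop:ker-S} (replace $t$ by $t/i$ in \eqref{kernel:S}), after which the Gaussian bound follows from $|A_\alpha|\leq C$, $\int_0^\infty|B_\alpha(s,\theta,\bar\theta)|\,ds\leq C$ (i.e.\ \eqref{in-B}) and $\cosh s\geq 1$, which turns the exponential factors in the second term into $e^{-|x-y|^2/4t}$. That route is self-contained once the 4D kernel formula is in hand, and the same formula is reused for the spectral measure and the wave estimates, which is why the paper takes it. Your route instead writes $\LL_{\A}=H_A\otimes I+I\otimes(-\Delta_{\vec y_2})$ and multiplies the 2D Aharonov--Bohm heat kernel by the free 2D Gaussian; this is more elementary in that it needs no 4D construction and reduces everything to the well-studied planar operator, but it carries two points you should make explicit: (i) the self-adjoint realization of $\LL_{\A}$ used in the paper (Friedrichs extension from $C_c^\infty$ away from the singular set, encoded in the expansion in $J_{|k+\alpha|}$) must be identified with the tensor-sum operator, which requires checking that finite sums of products form a form core for both --- true, but worth a sentence; and (ii) the diamagnetic domination $|e^{-tH_A}(\vec y_1,\vec y_1')|\leq (4\pi t)^{-1}e^{-|\vec y_1-\vec y_1'|^2/4t}$ for the genuinely singular $A_B\notin L^2_{\mathrm{loc}}(\R^2)$, which you correctly flag as the delicate step. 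It does hold for the Friedrichs realization (and can also be read off from the classical closed-form 2D Aharonov--Bohm heat kernel, which is exactly the 2D analogue of \eqref{kernel:S} with $|A_\alpha|\leq 1$ and $\int_0^\infty|B_\alpha|\,ds$ bounded), so your second fallback --- citing the explicit kernel --- is the safer one; the regularization-plus-strong-resolvent-convergence route needs extra care because careless smoothing of $A_B$ near the origin can converge to a different self-adjoint extension. With either of these justifications supplied, your proof is complete and yields even a sharper constant than the paper states.
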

From the heat kernel estimates and our previous work \cite{KMVZZ} with Killip, Visan and Miao,  one can obtain the equivalence of Sobolev norms
\begin{equation}\label{equ:esonorm1}
  \big\|(-\Delta)^\frac{s}{2}f\big\|_{L^p(\R^4)}\simeq \big\|\mathcal{L}_{\bf A}^\frac{s}{2}f\big\|_{L^p(\R^4)},\quad 1< p<\frac{4}{s}
\end{equation}
therefore, there holds chain rules
\begin{equation}\label{chain:LA}
\| f g \|_{H^{s,p}_{\bf A}(\R^4)} \lesssim \| f \|_{H^{s,p_1}_{\bf A}(\R^4)} \| g \|_{L^{p_2}(\R^4)} + \| f \|_{L^{p_3}(\R^4)}\| g \|_{H^{s,p_4}_{\bf A}(\R^4)},
\end{equation}
for $s>0$, $1<p,p_j<\tfrac{4}{s}(j=1,2,3,4)$
and
$\frac1p=\frac1{p_1}+\frac1{p_2}=\frac1{p_3}+\frac1{p_4}$.\vspace{0.2cm}

Our paper is organized as follows. Section 2 presents the representation of the spectral measure associated with $\LL_{\A}$. Section 3 is devoted to the proofs of the Strichartz estimates. In Section 4, we establish a scattering theory for the defocusing nonlinear subcritical Schr\"odinger equation \eqref{equ:nlsab4}.

\section{The spectral measure}

In this section, we first reduce the operator $H_{A,2}$ to an operator $\LL_{\A}$ given in \eqref{LA} below by making change of
variables. By modifying the method of \cite{FZZ, GYZZ}, we prove the representations of Schr\"odinger and wave propagators, spectral measure associated with the operator $\LL_{\A}$.
\subsection{The operator} From \eqref{Fj} and \eqref{H-N} with $N=2$,
we see
\begin{equation}\label{H-2}
H_{A,2}=\sum_{j=1}^2 (-i\nabla_j+{\bf F}_j)^2,
\end{equation}
where $\vec x_1=(x_{11}, x_{12}), \vec x_2=(x_{21},x_{22})\in\R^2$ and
\begin{align*}
&\nabla_{\vec x_1}=(\partial_{x_{11}},\partial_{x_{12}}),\quad {\bf F}_1=\alpha\Big(-\frac{x_{12}-x_{22}}{|\vec x_1-\vec x_2|},\frac{x_{11}-x_{21}}{|\vec x_1-\vec x_2|}\Big), \\
&\nabla_{\vec x_2}=(\partial_{x_{21}},\partial_{x_{22}}), \quad {\bf F}_2=\alpha\Big(-\frac{x_{22}-x_{12}}{|\vec x_1-\vec x_2|},\frac{x_{21}-x_{11}}{|\vec x_1-\vec x_2|}\Big).
\end{align*}
Let
\begin{equation*}
{\vec y}_1=\frac{\vec x_1-\vec x_2}{\sqrt{2}},\quad {\vec y}_2=\frac{\vec x_1+\vec x_2}{\sqrt{2}},
\end{equation*}
then
\begin{equation*}
\nabla_{{\vec x}_1}=\frac1{\sqrt{2}}(\nabla_{{\vec y}_1}+\nabla_{{\vec y}_2}),\quad\nabla_{\vec x_2}=\frac1{\sqrt{2}}(\nabla_{{\vec y}_2}-\nabla_{{\vec y}_1}),
\end{equation*}
and
\begin{align}\label{equ:ha2la}
H_{A,2}=&\Big(-i\nabla_{\vec y_1}+\alpha\big(-\frac{y_{12}}{|\vec y_1|^2},\frac{y_{11}}{|\vec y_1|^2}\big)\Big)^2-\Delta_{\vec y_2}\\\nonumber
=&\Big(-i\big(\nabla_{\vec y_1},\nabla_{\vec y_2}\big)+\alpha\big(-\frac{y_{12}}{|\vec y_1|^2},\frac{y_{11}}{|\vec y_1|^2},0,0\big)\Big)^2.
\end{align}
In fact
\begin{align*}
H_{A,2}=& (-i\nabla_{\vec{x}_1}+{\bf F}_1)^2+(-i\nabla_{\vec{x}_2}+{\bf F}_2)^2\\
=&\tfrac12\big(-i\nabla_{\vec{y}_1}-i\nabla_{\vec{y}_2}+\alpha\big(-\tfrac{y_{12}}{|\vec{y}_1|^2},\tfrac{y_{11}}{|\vec{y}_1|^2}\big)\big)^2+
\tfrac12\big(i\nabla_{\vec{y}_1}-i\nabla_{\vec{y}_2}-\alpha\big(-\tfrac{y_{12}}{|\vec{y}_1|^2},\tfrac{y_{11}}{|\vec{y}_1|^2}\big)\big)^2\\
=&\tfrac12\big(-i\nabla_{\vec{y}_1}+\alpha\big(-\tfrac{y_{12}}{|\vec{y}_1|^2},\tfrac{y_{11}}{|\vec{y}_1|^2}\big)-i\nabla_{\vec{y}_2}\big)^2+
\tfrac12\big(-i\nabla_{\vec{y}_1}+\alpha\big(-\tfrac{y_{12}}{|\vec{y}_1|^2},\tfrac{y_{11}}{|\vec{y}_1|^2}\big)+i\nabla_{\vec{y}_2}\big)^2\\
=&\tfrac12\big(-i\nabla_{\vec{y}_1}+\alpha\big(-\tfrac{y_{12}}{|\vec{y}_1|^2},\tfrac{y_{11}}{|\vec{y}_1|^2}\big)\big)^2
-\big(-i\nabla_{\vec{y}_1}+\alpha\big(-\tfrac{y_{12}}{|\vec{y}_1|^2},\tfrac{y_{11}}{|\vec{y}_1|^2}\big)\big)\cdot i\nabla_{\vec{y}_2}
-\tfrac12\Delta_{\vec{y}_2}\\
&+\tfrac12\big(-i\nabla_{\vec{y}_1}+\alpha\big(-\tfrac{y_{12}}{|\vec{y}_1|^2},\tfrac{y_{11}}{|\vec{y}_1|^2}\big)\big)^2
+\big(-i\nabla_{\vec{y}_1}+\alpha\big(-\tfrac{y_{12}}{|\vec{y}_1|^2},\tfrac{y_{11}}{|\vec{y}_1|^2}\big)\big)\cdot i\nabla_{\vec{y}_2}
-\tfrac12\Delta_{\vec{y}_2}\\
=&\Big(-i\nabla_{\vec y_1}+\alpha\big(-\frac{y_{12}}{|\vec y_1|^2},\frac{y_{11}}{|\vec y_1|^2}\big)\Big)^2-\Delta_{\vec y_2}\\
=&\Big(-i\nabla_{\vec y_1}+\alpha\big(-\frac{y_{12}}{|\vec y_1|^2},\frac{y_{11}}{|\vec y_1|^2}\big),-i\nabla_{\vec y_2}\Big)^2\\
=&\Big(-i\big(\nabla_{\vec y_1},\nabla_{\vec y_2}\big)+\alpha\big(-\frac{y_{12}}{|\vec y_1|^2},\frac{y_{11}}{|\vec y_1|^2},0,0\big)\Big)^2,
\end{align*}
which implies \eqref{equ:ha2la}. Hence, if we denote $y=(\vec{y}_1,\vec{y}_2)\in\R^4$ and
$$v(\vec{y}_1,\vec{y}_2)=u\big(\tfrac{\vec{y}_1+\vec{y}_2}{\sqrt{2}},-\tfrac{\vec{y}_1-\vec{y}_2}{\sqrt{2}}\big)\quad\text{or}\quad
u(\vec{x}_1,\vec{x}_2)=v(\vec{y}_1,\vec{y}_2),$$
then,
\begin{align}\label{changes}
  H_{A,2} u(\vec x_1, \vec x_2)=&
  \LL_{\A} v(\vec y_1, \vec y_2),
\end{align}
 with
\begin{equation*}
\LL_{{\A}}=\Big(-i\nabla_y+{\A}(y)\Big)^2, \quad {\A}(y)=\alpha\Big(\frac {-y_{12}}{|y_{11}|^2+|y_{12}|^2},\frac {y_{11}}{|y_{11}|^2+|y_{12}|^2},0,0\Big)
\end{equation*}
 which is analogue of \eqref{AB}. Both of them are effected in the first two variables by the magnetic potential.
 Without confusing, from now on, we briefly write $\LL_{\A}$ to consist with \eqref{AB} as follows:
 \begin{equation}\label{LA}
\LL_{{\A}}=\Big(-i\nabla_x+{\A}(x)\Big)^2, \quad x=(x_{1}, x_{2}, x_{3}, x_{4})\in\R^4\setminus\{0,0, x_3, x_4\},
\end{equation}
with
\begin{equation}\label{p:A}
{\A}(x)=\alpha\left(-\frac{x_2}{x_1^2+x_2^2},\frac{x_1}{x_1^2+x_2^2}, 0, 0\right),\quad \alpha\in\R.
\end{equation}
This operator can be regraded as a one-particle model but with more singular
Aharonov-Bohm type potential in $\R^4$.\vspace{0.2cm}

We remark that the operator \eqref{LA} with magnetic potential \eqref{p:A} was studied in \cite{FKLV} from the Hardy inequality viewpoint and in \cite{FV} from the virial identities, see the special model in \cite[(1.19)]{FV} and
\cite[(3)]{FKLV}.
However,  the Strichartz estimates for dispersive equations with potential \eqref{p:A} have not been proved yet even though the Strichartz estimates obtained in \cite{FV} for a bit faster decaying ${\A}$.
It is known that the magnetic field $B=\text{curl} {\A}=(0,0,0,\delta)$ with  $\delta$ denoting the Dirac delta function.
Heuristic, since the dispersion obstruction $B_\tau=\frac{x}{|x|}\wedge B$ (the trapping component of magnetic field arising from ${\A}$) vanishes,
the Strichartz estimate should hold. Therefore we can modify the method of \cite{FZZ, GYZZ} to study the operator \eqref{LA}.

\subsection{The Schr\"odinger  kernel}
In this subsection, we modify the argument of \cite{GYZZ}, in which we construct the spectral measure for a similar model in $\R^2$,  to construct the kernel of Schr\"odinger  operator $e^{it\mathcal{L}_{{\A}}}$.
\begin{proposition}\label{prop:ker-S} Let $x=(r\cos\theta,r\sin\theta,x_3, x_4)$, $y=(\bar{r}\cos\bar{\theta},\bar{r}\sin\bar{\theta},y_3, y_4)$, and let $K(t;r,\theta,x_3, x_4;\bar{r},\bar{\theta},y_3, y_4)$ be the kernel of $e^{it\mathcal{L}_{{\A}}}$ with $\LL_{{\A}}$ being as in \eqref{LA}, then there holds
\begin{align}\label{kernel:S}
K(t;r,\theta,x_3,x_4;\bar{r},\bar{\theta},y_3,y_4)=&\frac1{4\pi}\frac1{(it)^{2}}e^{-\frac{|x-y|^2}{4it}}A_\alpha(\theta,\bar{\theta})
+\frac1{4\pi}\frac1{(it)^{2}}\\\nonumber
&\times e^{-\frac{r^2+\bar{r}^2+|x_3-y_3|^2+|x_4-y_4|^2}{4it}}\int^\infty_0
e^{-\frac{r\bar{r}\cosh s}{2it}}B_\alpha(s,\theta,\bar{\theta})\mathrm{d}s,\\\nonumber
:=&G(t;r,\theta,x_3,x_4;\bar{r},\bar{\theta},y_3,y_4)+D(t;r,\theta,x_3;\bar{r},\bar{\theta},y_3,y_4),
\end{align}
where $A_\alpha(\theta,\bar{\theta})$ and $B_\alpha(s,\theta,\bar{\theta})$ are respectively
\begin{align}\label{A-al}
A_\alpha(\theta,\bar{\theta})&=e^{i\alpha(\theta-\bar{\theta})}\big[1_{[0,\pi]}(|\theta-\bar{\theta}|)
+e^{-2\pi i\alpha}1_{[\pi,2\pi]}(|\theta-\bar{\theta}|)\big],
\end{align}
and
\begin{align}\label{B-al}
B_\alpha(s,\theta,\bar{\theta})=&-\Big[\sin(|\alpha|\pi)e^{-|\alpha|s}+\sin(\alpha\pi)\nonumber\\
&\times\frac{(e^{-s}-\cos(\theta-\bar{\theta}+\pi))\sinh\alpha s-i\sin(\theta-\bar{\theta}+\pi)\cosh\alpha s}{\cosh s-\cos(\theta-\bar{\theta}+\pi)}\Big].
\end{align}
\end{proposition}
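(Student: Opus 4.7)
The plan is to exploit the tensor-product structure hidden in $\mathcal{L}_{\A}$. Since $\A(x)$ has zero components in the $x_3, x_4$ directions and its nonzero components depend only on $(x_1, x_2)$, write $x' = (x_1, x_2)$ and $x'' = (x_3, x_4)$; then the operator splits as a commuting tensor sum
\begin{equation*}
\mathcal{L}_{\A} = H_A^{(2)} \otimes I_{x''} + I_{x'} \otimes (-\Delta_{x''}),
\end{equation*}
where $H_A^{(2)} = \big(-i\nabla_{x'} + \alpha(-x_2/|x'|^2,\, x_1/|x'|^2)\big)^2$ is the 2D Aharonov--Bohm Hamiltonian from \eqref{eq:H}. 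The two summands act on independent variables, hence they commute, so
\begin{equation*}
e^{it\mathcal{L}_{\A}} = e^{itH_A^{(2)}} \otimes e^{-it\Delta_{x''}},
\end{equation*}
and the integral kernel factorizes as the product of the 2D Aharonov--Bohm Schr\"odinger kernel in the $x'$ variables and the 2D free Schr\"odinger kernel in the $x''$ variables.

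For the free factor I use the classical expression $(4\pi it)^{-1} e^{-|x''-y''|^2/(4it)}$. For the Aharonov--Bohm factor I invoke the explicit representation derived by the authors in \cite{GYZZ}: in polar coordinates $(r,\theta)$ on $\R^2$, the 2D kernel decomposes into a geometric-optics piece with angular factor $A_\alpha(\theta,\bar\theta)$ (carrying the $e^{-2\pi i \alpha}$ monodromy when $|\theta-\bar\theta|$ crosses $\pi$) plus a diffractive piece given by the integral over $s\in(0,\infty)$ of $e^{-r\bar r \cosh s/(2it)}$ against $B_\alpha(s,\theta,\bar\theta)$. Multiplying the two kernels and using $|x-y|^2 = r^2 + \bar r^2 - 2r\bar r \cos(\theta - \bar\theta) + |x''-y''|^2$ to combine the exponents produces the claimed formula \eqref{kernel:S} (with the two summands of \eqref{kernel:S} coming respectively from the geometric and diffractive pieces of the 2D factor).

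The 2D Aharonov--Bohm kernel itself is produced by separation of variables: the angular part is diagonalized by $\{e^{in\theta}\}_{n\in\Z}$ with eigenvalues $(n+\alpha)^2$, and the radial part by Hankel transforms with shifted Bessel indices $|n+\alpha|$. Weber's second exponential integral evaluates the resulting radial spectral integrals in closed form in terms of $I_{|n+\alpha|}\big(r\bar r/(2it)\big)$. The main technical obstacle, to be imported essentially verbatim from \cite{GYZZ}, is the explicit evaluation of the divergent Bessel series $\sum_{n\in\Z} e^{in(\theta-\bar\theta)}\, I_{|n+\alpha|}(\cdot)$: one replaces each $I_{|n+\alpha|}$ by a Schl\"afli-type contour integral and then deforms contours, whereupon the sum cleaves into a distributional geometric contribution (producing the piecewise phase $A_\alpha$) and an absolutely convergent remainder (producing the hyperbolic integrand $B_\alpha$ via the branch cut arising from the fractional Bessel index). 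Once this 2D summation is in hand, the 4D statement follows from the product structure above with no new analytic input.
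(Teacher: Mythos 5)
Your proposal is correct and amounts to essentially the same argument as the paper: the paper's proof is itself a separation of variables in which the $(x_3,x_4)$-variables contribute the free Gaussian factor (via the Fourier transform of $e^{it|\xi|^2}$) and the $(x_1,x_2)$-variables are treated by the angular decomposition with eigenvalues $(k+\alpha)^2$, the Hankel transform, Weber's identity, and the summation of the modified-Bessel series into the $A_\alpha$ and $B_\alpha$ pieces imported from \cite{FZZ,GYZZ} --- precisely the content of the 2D Aharonov--Bohm kernel you invoke. Your explicit tensor-product factorization $e^{it\mathcal{L}_{\A}}=e^{itH_A^{(2)}}\otimes e^{-it\Delta_{x''}}$ is just an operator-theoretic repackaging of that computation, so it requires no input beyond what the paper already uses.
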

\begin{proof}
We prove \eqref{kernel:S} by following the idea in \cite{GYZZ}. First, we obtain the fundamental solution of Schr\"odinger  operator $e^{it\mathcal{L}_{{\A}}}$.
 In the cylindrical coordinates
\begin{equation}\label{CY}
x_1=r\cos\theta,\quad x_2=r\sin\theta,\quad x_3=x_3,\quad x_4=x_4,
\end{equation}
from \eqref{p:A}, we write
\begin{equation}\label{LA'}
\begin{split}
\mathcal{L}_{{\A}}&=-\Delta +2i{\A}(x)\cdot\nabla +|{\A}(x)|^2\\
&=-\partial_r^2-\frac1r\partial_r+\frac{(i\partial_\theta+\alpha)^2}
{r^2}-\partial_{x_3}^2-\partial_{x_4}^2.
\end{split}
\end{equation}
For each $k\in \Z$, $Y_k(\theta):=(2\pi)^{-1/2}e^{-ik\theta}$ satisfies
\begin{align}\label{eig-Y}
\begin{cases}
(i\partial_\theta+\alpha)^2Y_k(\theta)=(k+\alpha)^2Y_k(\theta),\quad on  \ \ \mathbb{S}^1,\\
\int_{\mathbb{S}^1}|Y_k(\theta)|^2\mathrm{d}\theta=1,
\end{cases}
\end{align}
hence, the eigenfunctions $\{Y_k(\theta)\}_{k\in\Z}$ construct a complete orthonormal basis in $L^2(\mathbb{S}^1)$. Moreover, let $x'=(x_3,x_4)\in\R^2$, we can write $f(x)\in L^2(\mathbb{R}^4)$ in the form of
\begin{equation}\label{exp-f}
f(x)=f(r,\theta,x')=\sum_{k\in\Z} a_k(r,x')Y_k(\theta),
\end{equation}
where
$$a_k(r,x')=\int^{2\pi}_0 f(r,\theta,x')Y_k(\theta).$$
Therefore, from \eqref{LA'}, \eqref{eig-Y} and \eqref{exp-f}, we know
\begin{equation}\label{2.4}
\begin{split}
\mathcal{L}_{{\A}}f
&=\sum_{k\in\Z}\Big(A_{k,\alpha}a_k(r,x')-\partial_{x_3}^2a_k(r,x')-\partial_{x_4}^2a_k(r,x')\Big)Y_k(\theta)
\end{split}
\end{equation}
where
\begin{equation}\label{Ak-al}
\begin{split}
A_{k,\alpha}=-\partial_r^2-\frac1r\partial_r+\frac{(k+\alpha)^2}
{r^2}.
\end{split}
\end{equation}
Let $\nu=\nu(k)=|k+\alpha|$, and recall  Hankel transform  (e.g. see \cite{BPSS, Taylor}) of $\nu$ order defined by
\begin{equation}\label{HT}
(H_\nu f)(\rho,\theta,x')=\int^\infty_0J_\nu(r\rho)f(r,\theta,x')r
\mathrm{d}r,\quad\forall f\in L^2(\mathbb{R}^4),
\end{equation}
where the Bessel function
\begin{equation}\label{bess}
J_\nu(r)=\frac{(r/2)^\nu}{\Gamma(\nu+\frac{1}{2})\Gamma(1/2)}\int_{-1}^1e^{isr}(1-s^2)^{(2\nu-1)/2}\mathrm{d}s,
\quad \nu>-\frac12, r>0.
\end{equation}
For our purpose, we recall some properties of the Hankel transform, see \cite{BPSS, Taylor}.
\begin{lemma}[Hankel transform]\label{lem:hankel}
Let $\mathcal{H}_\nu$ be the Hankel transform in \eqref{HT} and $A_\nu:=-\partial^2_r-\frac{1}{r}\partial_r+\frac{\nu^2}{r^2}$. Then
\begin{item}
\item $\mathrm{(1)}$ $\mathcal{H}_\nu=\mathcal{H}^{-1}_\nu,$

\item $\mathrm{(2)}$ $\mathcal{H}_\nu$ is self-adjoint, i.e.$\quad \mathcal{H}_\nu=\mathcal{H}^{*}_\nu$,

\item $\mathrm{(3)}$ $\mathcal{H}_\nu$ is an $L^2$ isometry, i.e. $\|\mathcal{H}_\nu f\|_{L^2(\R^4)}=\|f\|_{L^2(\R^4)},$

\item $\mathrm{(4)}$ $\mathcal{H}_\nu(A_\nu f)(\rho,\theta)=\rho^2(\mathcal{H}_\nu f)(\rho,\theta),$ for $f\in L^2.$

\end{item}

\end{lemma}

In the cylindrical coordinates \eqref{CY}, if the initial data
$$u_0(x)=f(r,\theta,x')=\sum_{k\in\Z} a_k(r,x')Y_k(\theta),$$
then $u(t,x)=e^{it\LL_{{\A}}}f$ satisfies
$$u(t,x)=u(t;r,\theta,x')=\sum_{k\in\Z} u_k(t;r,x')Y_k(\theta),$$
where $u_k(t;r,x')$ solves
\begin{equation}\label{Sch}
\begin{cases}
i\partial_t u_k+A_{k,\alpha} u_k-\partial_{x_{3}}^2u_k-\partial_{x_4}^2u_k=0,\quad (t,r,x')\in\R\times (0,\infty)\times \R^2,\\
u_k(0,r, x')= a_k(r,x') .
\end{cases}
\end{equation}

By taking Fourier transform in variables $x'$ and Hankel transform in variable $r$ and using Lemma \ref{lem:hankel}, and let $\xi=(\xi_3,\xi_4)\in\R^2$, we obtain
\begin{equation}\label{ord}
\begin{cases}
i\partial_t\tilde{u}_k(t;\rho,\xi)+\rho^2\tilde{u}_k(t;\rho,\xi)+
|\xi|^2\tilde{u}_k(t;\rho,\xi)=0,\\
\tilde{u}_k(0;\rho,\xi)=\tilde{a}_k(\rho,\xi).
\end{cases}
\end{equation}
where
$$\tilde{u}_k(t;\rho,\xi)=[H_{\nu(k)}\hat{u}_k](t;\rho,\xi),\qquad\tilde{a}_k(\rho,\xi)
=[H_{\nu(k)}\hat{a}_k](\rho,\xi)$$
and $\hat{a}(\xi)=\int_{\R^2} e^{-ix'\cdot\xi} a(x') dx'$.
The solution for ordinary differential equation \eqref{ord} is
$$\tilde{u}_k(t;\rho,\xi)=\tilde{a}_k(\rho,\xi)e^{it(\rho^2+|\xi|^2)}.$$
By using Lemma \ref{lem:hankel} again, we obtain
$$u(t;r,\theta,x')=\int_{\R^2}\int_0^\infty\int_0^{2\pi}K(t;r,\theta,x';\bar{r},\bar{\theta},y')f(\bar{r},\bar{\theta},y')\,\bar{r}
\mathrm{d}\bar{r}\mathrm{d}\bar{\theta} dy',$$
where
\begin{equation}
\begin{split}
&K(t;r,\theta,x';\bar{r},\bar{\theta},y')=\sum_{k\in\Z}Y_k(\theta)\overline{Y_k(\bar{\theta})}
\\ &\times\int_{\R^2} \int_0^\infty e^{i(x'-y')\cdot\xi} e^{it(\rho^2+|\xi|^2)}
J_{\nu(k)}(r\rho)J_{\nu(k)}(\bar{r}\rho)\rho\mathrm{d}\rho \mathrm{d}\xi .
\end{split}
\end{equation}
 Noting that
$$\int_{\R^2} e^{-it|\xi|^2}e^{-ix\cdot\xi}\mathrm{d}\xi=\frac \pi {it}e^{-\frac {|x|^2}{4it}}$$
and $Y_k(\theta)=(2\pi)^{-1/2}e^{-ik\theta}$, we rewrite $K(t;r,\theta,x';\bar{r},\bar{\theta},y')$ as
\begin{align}\label{3.7'}
&K(t;r,\theta,x';\bar{r},\bar{\theta},y')\nonumber\\
=&\frac 1 {2it}e^{-\frac {|x'-y'|^2}{4it}}
\sum_{k\in \mathbb{Z}}e^{-ik(\theta-\bar{\theta})}\int_0^\infty J_{\nu(k)}(r\rho)J_{\nu(k)}(\bar{r}\rho)e^{it\rho^2}\rho\mathrm{d}\rho,
\nonumber\\
=&\frac1{2it}e^{-\frac {|x'-y'|^2}{4it}}\sum_{k\in \mathbb{Z}}e^{-ik(\theta-\bar{\theta})}\times\lim_{\varepsilon\searrow0}\frac{e^{-\frac{r^2+\bar{r}^2}{4(\varepsilon+it)}}}{2(\varepsilon+it)}
I_{\nu(k)}\left(\frac{r\bar{r}}{2(\varepsilon+it)}\right),
\end{align}
where the Weber identity \cite{Taylor} is used in the last equality and the term $I_{\nu(k)}(z)$ (\cite{Watson}) is given by
$$I_{\nu(k)}(z)=\frac 1\pi\int^\pi _0e^{z\cos s}\cos \nu s\mathrm{d}s-\frac {\sin \nu\pi}\pi\int^\infty _0e^{-z\cosh s}e^{-s\nu}\mathrm{d}s.$$
Recall $\nu=|k+\alpha|$, similarly as in \cite{FZZ,GYZZ}, we get
\begin{align}\label{3.8'}
&\frac 1\pi\sum_{k\in \mathbb{Z}}e^{-ik(\theta-\bar{\theta})}\int^\pi _0e^{z\cos s}\cos \nu s\mathrm{d}s\nonumber\\
=&\frac 1\pi\times
\begin{cases}
e^{z\cos(\theta-\bar{\theta})}e^{i\alpha(\theta-\bar{\theta})},\quad|\theta-\bar{\theta}|<\pi,\\
e^{z\cos(\theta-\bar{\theta})}e^{i\alpha(\theta-\bar{\theta}-2\pi)},\quad\pi<|\theta-\bar{\theta}|<2\pi,
\end{cases}
\end{align}
and
\begin{align}\label{3.9'}
&\frac 1\pi\sum_{k\in \mathbb{Z}}e^{-ik(\theta-\bar{\theta})}\sin\nu\pi\int^\infty _0e^{-z\cosh s}e^{-s\nu}\mathrm{d}s\nonumber\\
=&\frac 1\pi\int_0^\infty e^{-z\cosh s}\Big[\sin(|\alpha|\pi)e^{-|\alpha|s}+\sin(\alpha\pi)\nonumber\\&\times
\frac{(e^{-s}-\cos(\theta-\bar{\theta}+\pi))\sinh\alpha s-i\sin(\theta-\bar{\theta}+\pi)\cosh\alpha s}{\cosh s-\cos(\theta-\bar{\theta}+\pi)}\Big]\mathrm{d}s.
\end{align}
By noticing that
$$(x_1-y_1)^2+(x_2-y_2)^2=r^2+\bar{r}^2-2r\bar{r}\cos(\theta-\bar{\theta})$$
and collecting \eqref{3.7'}-\eqref{3.9'} together, we finally prove Proposition \ref{prop:ker-S}.
\end{proof}
Decay estimates follow from the representation of fundamental solution \eqref{kernel:S}.
\begin{proposition}\label{prop:disper}
There exists a constant $C$ such that
\begin{equation}\label{DS}
|e^{it\mathcal{L}_{{\A}}}(x,y)|\leq C |t|^{-2},\quad \forall t\in\R\setminus\{0\}.
\end{equation}
\end{proposition}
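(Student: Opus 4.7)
The plan is to apply the kernel representation from Proposition \ref{prop:ker-S}, writing $K=G+D$, and bound each piece by a constant times $|t|^{-2}$. For the ``geometric'' piece $G$, the amplitude $A_\alpha(\theta,\bar\theta)$ in \eqref{A-al} clearly satisfies $|A_\alpha|\leq 1$, since the indicator functions are bounded by $1$ and $|e^{i\alpha(\theta-\bar\theta)}|=|e^{-2\pi i\alpha}|=1$. The Gaussian phase $e^{-|x-y|^2/(4it)}$ has modulus $1$ because $t\in\R$. Combined with the prefactor $(4\pi)^{-1}(it)^{-2}$, one gets $|G|\leq (4\pi)^{-1}|t|^{-2}$ without further analysis.

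For the ``diffractive'' piece $D$, the two factors $e^{-(r^2+\bar r^2+|x_3-y_3|^2+|x_4-y_4|^2)/(4it)}$ and $e^{-r\bar r\cosh s/(2it)}$ also have modulus $1$ for real $t$. Thus the matter reduces to establishing the uniform estimate
\[
\sup_{\theta,\bar\theta\in[0,2\pi)}\int_0^\infty |B_\alpha(s,\theta,\bar\theta)|\,ds\leq C,
\]
which would immediately give $|D|\leq C|t|^{-2}$ and finish the proof.

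To prove this uniform integrability, I would analyze the two summands in \eqref{B-al} separately. The first, $\sin(|\alpha|\pi)e^{-|\alpha|s}$, is trivially integrable with $L^1$-norm $|\sin(|\alpha|\pi)|/|\alpha|$, and vanishes if $\alpha\in\Z$ (in which case $\mathcal{L}_{\A}$ is gauge equivalent to $-\Delta$ and the bound is classical, so one may assume $|\alpha|<1$). The second summand is a ratio $R(s,\phi)$ with $\phi:=\theta-\bar\theta+\pi$, and one must control it both as $s\to\infty$ and at the resonant angles $\phi\in\{0,2\pi\}$. For large $s$, $\cosh s\sim\tfrac12 e^s$ while $\sinh(\alpha s),\cosh(\alpha s)=O(e^{|\alpha|s})$, yielding the uniform bound $|R(s,\phi)|=O(e^{-(1-|\alpha|)s})$, which is integrable on $[1,\infty)$. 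For small $s$ and $\cos\phi\neq 1$, the denominator is bounded below by $1-\cos\phi>0$ and everything is trivial.

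The main obstacle is the resonant case $\cos\phi=1$ (i.e.\ $\theta-\bar\theta=\pm\pi$), where both numerator and denominator vanish at $s=0$: a naive pointwise-in-$\phi$ bound blows up. The key cancellation is revealed by Taylor expansion: in this case $\sin\phi=0$, $\cosh s-1\sim s^2/2$, and $(e^{-s}-1)\sinh(\alpha s)\sim -\alpha s^2$, so $R(s,0)\to -2\alpha$. A uniform-in-$\phi$ version of this argument (treating $|\phi|\leq s$ and $|\phi|\geq s$ separately, for instance) delivers a bound on $R$ that is uniformly bounded near $s=0$ and exponentially decaying as $s\to\infty$, hence integrable with a constant independent of $\theta,\bar\theta$. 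This completes the plan.
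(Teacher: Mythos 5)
Your proposal follows the paper's route exactly in its reduction: decompose $K=G+D$ via Proposition \ref{prop:ker-S}, observe that all Gaussian/oscillatory factors are unimodular for real $t$, and reduce \eqref{DS} to the bound $|A_\alpha(\theta,\bar\theta)|+\int_0^\infty|B_\alpha(s,\theta,\bar\theta)|\,ds\leq C$. The difference is that the paper disposes of this bound in one line by citing \cite{FZZ} (this is \eqref{in-B}), whereas you prove it directly; your self-contained argument (large-$s$ decay of order $e^{-(1-|\alpha|)s}$, smallness of the denominator only at the resonant angles, Taylor cancellation there) is essentially the content of the cited estimate, so nothing is lost and the proof becomes independent of \cite{FZZ}. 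Two small points to tighten. First, the reduction to $|\alpha|<1$ should be stated as a gauge (unitary) equivalence shifting $\alpha$ by an integer, valid for every real $\alpha$ — excluding integer $\alpha$ alone does not give $|\alpha|<1$, and the exponential decay for large $s$ genuinely needs it. Second, near $s=0$ the second summand of \eqref{B-al} is \emph{not} pointwise bounded uniformly in $\phi:=\theta-\bar\theta+\pi$: its imaginary part behaves like $\sin\phi/(\cosh s-\cos\phi)$, which is of size $1/s$ when the distance of $\phi$ to the resonant values is comparable to $s$. What is true — and all that is needed — is uniform integrability in $s$: since $\cosh s-\cos\phi\gtrsim\max(s^2,\mathrm{dist}(\phi,2\pi\Z)^2)$ for small $s$, the term is dominated by $\min\big(1/\mathrm{dist}(\phi,2\pi\Z),\,\mathrm{dist}(\phi,2\pi\Z)/s^2\big)$, and integrating over $s\leq\mathrm{dist}(\phi,2\pi\Z)$ and $s\geq\mathrm{dist}(\phi,2\pi\Z)$ separately gives an $O(1)$ bound — which is exactly what your proposed splitting delivers. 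So the conclusion stands; just phrase the resonant-angle step as a uniform bound on the integral rather than a uniform pointwise bound on $B_\alpha$.
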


\begin{proof}
Indeed,  from \cite{FZZ}, we know
\begin{equation}\label{in-B}
|A_\alpha(\theta,\bar{\theta})|+\int^\infty_0|B_\alpha(s,\theta,\bar{\theta})|\leq C,
\end{equation}
then combining with \eqref{kernel:S}, dispersive estimates \eqref{DS} hold.
\end{proof}

\subsection{The representation of spectral measure} To prove the dispersive estimates for wave equation, we need the representation of the spectral measure associated with $\LL_{\A}$.
\begin{proposition}\label{prop:spect} Let $x=(r\cos\theta,r\sin\theta,x')$, $y=(\bar{r}\cos\bar{\theta},\bar{r}\sin\bar{\theta},y')$ where $x'=(x_3,x_4)\in\R^2$ and $y'=(y_3,y_4)\in\R^2$. Let $d E_{\sqrt{\mathcal{L}_{\A}}}(\lambda;x,y)$ be the spectral measure kernel associated with Schr\"odinger operator $\mathcal{L}_{\A}$ given by \eqref{LA}. Then
\begin{align}\label{kernel:sp}
d E_{\sqrt{\mathcal{L}_{\A}}}(\lambda;x,y)=&\frac{\lambda^3}{\pi^2}\sum_{\pm}a_{\pm}(\lambda|x-y|)e^{\pm i\lambda|x-y|}\times A_\alpha(\theta,\bar{\theta})\nonumber\\
&+\frac{\lambda^3}{\pi^2}\int^\infty_0\sum_{\pm}a_{\pm}(\lambda|\vec{n}|)e^{\pm i\lambda|\vec{n}|}\times B_\alpha(s,\theta,\bar{\theta})\mathrm{d}s,
\end{align}
where $a_{\pm}(r)$ satisfies
\begin{equation}\label{bean:a}
|\partial_r^k a_{\pm}(r)|\leq C_k(1+r)^{-\frac32-k},\quad\forall k\geq0,
\end{equation}
 and $A_\alpha(\theta,\bar{\theta})$, $B_\alpha(s,\theta,\bar{\theta})$ are as in \eqref{A-al} and \eqref{B-al} respectively, and
  \begin{equation}\label{equ:n}
 \vec{n}=\vec{n}_s=(r+\bar{r},\sqrt{2r\bar{r}(\cosh s-1)},x_3-y_3, x_4-y_4).
 \end{equation}

\end{proposition}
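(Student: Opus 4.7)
My plan is to derive the spectral density by Fourier-inverting the Schr\"odinger kernel \eqref{kernel:S} in the time variable. Because $\mathcal{L}_{\A}\ge 0$ is self-adjoint, the functional calculus gives
\[
  e^{it\mathcal{L}_{\A}}(x,y) \;=\; \int_0^\infty e^{it\lambda^2}\, dE_{\sqrt{\mathcal{L}_{\A}}}(\lambda;x,y),
\]
and the change of variable $\mu=\lambda^2$ together with Fourier inversion yields
\[
  \frac{dE_{\sqrt{\mathcal{L}_{\A}}}}{d\lambda}(\lambda;x,y) \;=\; \frac{\lambda}{\pi}\int_{\R} e^{-it\lambda^2}\, e^{it\mathcal{L}_{\A}}(x,y)\,dt,\qquad \lambda>0.
\]
I would plug in \eqref{kernel:S} and treat the two pieces $G$ and $D$ separately.

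For the $G$-term, after pulling out $A_\alpha(\theta,\bar\theta)$ and setting $R=|x-y|$, the required oscillatory integral
$\int_{\R}(it)^{-2}\,e^{-it\lambda^2 - R^2/(4it)}\,dt$
is precisely the one identifying the free Schr\"odinger kernel on $\R^4$ with the free spectral density; up to a dimensional constant it equals $\lambda^{2} J_1(\lambda R)/R$. I would then write $J_1(z)/z = \sum_\pm a_\pm(z)\,e^{\pm iz}$ via the Hankel splitting $J_1 = \tfrac12(H_1^{(1)}+H_1^{(2)})$ combined with a smooth cutoff that absorbs the singularity of $H_1^{(\pm)}$ at the origin against the smoothness of $J_1(z)/z$ near $z=0$, and verify \eqref{bean:a} by induction on $k$ using the Bessel recurrence $zJ_\nu'(z)+\nu J_\nu(z)=zJ_{\nu-1}(z)$. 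Collecting constants into $a_\pm$ produces the first line of \eqref{kernel:sp}.

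For the $D$-term, completion of the square shows that
\[
  r^2+\bar r^2+|x_3-y_3|^2+|x_4-y_4|^2+2r\bar r\cosh s \;=\; |\vec n_s|^2
\]
with $\vec n_s$ as in \eqref{equ:n}, so $D$ is literally an $s$-integral of the free $\R^4$ Schr\"odinger kernel at distance $|\vec n_s|$ weighted by $B_\alpha(s,\theta,\bar\theta)$. The integrability estimate \eqref{in-B} and the uniform boundedness of the $G$-oscillatory integral in the parameter $R$ permit Fubini to exchange the $t$-inversion with the $s$-integration, so repeating the $G$-argument with $R\mapsto|\vec n_s|$ reproduces the second line of \eqref{kernel:sp} with the same profiles $a_\pm$.

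I expect the main technical difficulty to be the \emph{uniform} derivative bound \eqref{bean:a} on $r\in[0,\infty)$: the naive Hankel splitting is singular at $0$ while $J_1(z)/z$ is smooth there, and one must therefore carefully redistribute a smooth cutoff $\chi$ supported near the origin between $a_+$ and $a_-$ so that each resulting profile is smooth on $[0,\infty)$ and satisfies the polynomial decay with every derivative. I would follow the cutoff-based bookkeeping carried out for the two-dimensional analogue in \cite{GYZZ}, adapted here to the Bessel order $\nu=1$ dictated by the ambient dimension four; a secondary point is to justify the Fubini exchange in the $D$-term, which can be done by an $\eta\to 0^+$ regularization $\lambda\mapsto\lambda+i\eta$ using \eqref{in-B}.
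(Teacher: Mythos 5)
Your proposal is correct and follows essentially the same route as the paper: both Fourier-invert the time variable of the kernel from Proposition \ref{prop:ker-S}, use the completion of the square to recognize the $D$-term as the free $\R^4$ kernel at distance $|\vec n_s|$ as in \eqref{equ:n}, and reduce to the half-wave decomposition of the free four-dimensional spectral density with amplitudes obeying \eqref{bean:a}. The only difference is organizational: the paper packages the time inversion through the boundary values $(\mathcal{L}_{\A}-(\lambda^2\pm i0))^{-1}$, Stone's formula and the Poisson-kernel identity, then cites \cite[Theorem 1.2.1]{sogge} for the Fourier transform of the sphere measure, whereas you invert directly and construct $a_\pm$ by the Hankel splitting of $J_1(z)/z$ with a cutoff at the origin --- the same standard fact, proved by hand.
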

\begin{proof}
Note that for each $z\in\mathbb{C}$ satisfying $\Im z>0$, there holds
\begin{equation}\label{4.1}
\frac1{s-z}=i\int^\infty_0e^{-ist}e^{izt}\mathrm{d}t,\quad \forall s\in\mathbb{R}.
\end{equation}
This together with \eqref{kernel:S} shows
\begin{align}\label{4.2}
(\mathcal{L}_{\A}-(\lambda^2+i0))^{-1}=&i\lim_{\varepsilon\searrow0}\int^\infty_0e^{-it\mathcal{L}_{\A}}
e^{it(\lambda^2+i\varepsilon)}\mathrm{d}t\nonumber\\
=&i\lim_{\varepsilon\searrow0}\int^\infty_0G(t;r,\theta,x_3,x_4;\bar{r},\bar{\theta},y_3,y_4)
e^{it(\lambda^2+i\varepsilon)}\mathrm{d}t\nonumber\\
&+i\lim_{\varepsilon\searrow0}\int^\infty_0D(t;r,\theta,x_3,x_4;\bar{r},\bar{\theta},y_3,y_4)
e^{it(\lambda^2+i\varepsilon)}\mathrm{d}t.
\end{align}
As did in \cite{GYZZ}, noting that
$$\int_{\R^4}e^{-ix\cdot\xi} e^{-it|\xi|^2}\;d\xi=\frac{\pi}{(it)^2}e^{-\frac{|x|^2}{4it}},$$
we get for $z=\lambda^2+i\epsilon$ with $\epsilon>0$
\begin{align*}
 & \int_0^\infty \frac{e^{-\frac{|x-y|^2}{4it}} }{(it)^2}e^{itz}\;dt
  =\frac{1}{\pi}\int_0^\infty\int_{\R^4}e^{-i(x-y)\cdot\xi} e^{-it|\xi|^2}\;d\xi \,e^{itz}\;dt\\
  =&\frac{1}{\pi}\int_{\R^4}e^{-i(x-y)\cdot\xi} \int_0^\infty e^{-it(|\xi|^2-z)}\;dt\;d\xi
  =\frac{1}{i\pi}\int_{\R^4}\frac{e^{-i(x-y)\cdot\xi}}{|\xi|^2-z}\;d\xi,
\end{align*}
and
\begin{equation}\label{equ:intr1r223}
  \int_0^\infty  \frac{e^{-\frac{r_1^2+r_2^2+|x'-y'|^2}{4it}} }{(it)^2} e^{-\frac{r_1r_2}{2it}\cosh s} e^{itz}\;dt=\frac{1}{i\pi}\int_{\R^4}\frac{e^{-i{\vec{n}}\cdot\xi}}{|\xi|^2-z}\;d\xi,
\end{equation}
where ${\vec {n}}=(r_1+r_2, \sqrt{2r_1r_2(\cosh s-1)}, x'-y')$.
Then from \eqref{kernel:S}, we have
\begin{equation}
i\lim_{\varepsilon\searrow0}\int^\infty_0G(t;r,\theta,x';\bar{r},\bar{\theta},y')
e^{it(\lambda^2+i\varepsilon)}\mathrm{d}t=\frac1{4\pi^2}\int_{\mathbb{R}^4}\frac{e^{-i(x-y)\cdot\xi}}
{|\xi|^2-(\lambda^2+i0)}\mathrm{d}\xi\times A_\alpha(\theta,\bar{\theta}),
\end{equation}
and
\begin{align}\label{4.4}
&i\lim_{\varepsilon\searrow0}\int^\infty_0 D(t;r,\theta,x';\bar{r},\bar{\theta},y')
e^{it(\lambda^2+i\varepsilon)}\mathrm{d}t\nonumber\\
=&\frac1{4\pi^2}\int^\infty_0\int_{\mathbb{R}^4}\frac{e^
{-i(r+\bar{r},\sqrt{2r\bar{r}(\cosh s-1)},x_3-y_3,x_4-y_4)\cdot\xi}}
{|\xi|^2-(\lambda^2+i0)}\mathrm{d}\xi\times B_\alpha(s,\theta,\bar{\theta})\mathrm{d}s.
\end{align}
Therefore, collecting \eqref{4.2}-\eqref{4.4} together and recalling \eqref{equ:n}, we get
\begin{align}\label{inc-res}
(\mathcal{L}_{\A}-(\lambda^2+i0))^{-1}=&\frac1{4\pi^2}\int_{\mathbb{R}^4}\frac{e^{-i(x-y)\cdot\xi}}
{|\xi|^2-(\lambda^2+i0)}\mathrm{d}\xi\times A_\alpha(\theta,\bar{\theta})\nonumber\\
&+\frac1{4\pi^2}\int^\infty_0\int_{\mathbb{R}^4}\frac{e^
{-i\vec{n}\cdot\xi}}
{|\xi|^2-(\lambda^2+i0)}\mathrm{d}\xi\times B_\alpha(s,\theta,\bar{\theta})\mathrm{d}s.
\end{align}
Observing that $\overline{A_{-\alpha}}=A_{\alpha}$ and $\overline{B_{-\alpha}}=B_{\alpha}$, then we obtain
\begin{align}\label{out-res}
(\mathcal{L}_{\A}-(\lambda^2-i0))^{-1}=&\frac1{4\pi^2}\int_{\mathbb{R}^4}\frac{e^{-i(x-y)\cdot\xi}}
{|\xi|^2-(\lambda^2+i0)}\mathrm{d}\xi\times A_\alpha(\theta,\bar{\theta})\nonumber\\
&+\frac1{4\pi^2}\int^\infty_0\int_{\mathbb{R}^4}\frac{e^
{-i\vec{n}\cdot\xi}}
{|\xi|^2-(\lambda^2-i0)}\mathrm{d}\xi\times B_\alpha(s,\theta,\bar{\theta})\mathrm{d}s.
\end{align}
By using the Stone's formula, we deduce that
\begin{equation}\label{4.7}
\mathrm{d}E_{\sqrt{\mathcal{L}_{\A}}}(\lambda;x,y)=\frac\lambda{i\pi}
\big[(\mathcal{L}_{\A}-(\lambda^2+i0))^{-1}-(\mathcal{L}_{\A}-(\lambda^2-i0))^{-1}\big]\mathrm{d}\lambda.
\end{equation}
Combining \eqref{inc-res}-\eqref{4.7}, we further write
\begin{align*}
&\mathrm{d}E_{\sqrt{\mathcal{L}_{\A}}}(\lambda;x,y)\\
=&\frac1i\frac\lambda{4\pi^3}\int_{\mathbb{R}^4}
e^{-i(x-y)\cdot\xi}(\frac1{|\xi|^2-(\lambda^2-i0)}-\frac1{|\xi|^2-(\lambda^2+i0)})\mathrm{d}\xi
\times A_\alpha(\theta,\bar{\theta})\\
&+\frac1i\frac\lambda{4\pi^3}\int^\infty_0\left[\int_{\mathbb{R}^4}
e^{-i\vec{n}\cdot\xi}(\frac1{|\xi|^2-(\lambda^2-i0)}-\frac1{|\xi|^2-(\lambda^2+i0)})\mathrm{d}\xi\right]
\times B_\alpha(s,\theta,\bar{\theta})\mathrm{d}s.
\end{align*}
Similarly to \cite{GYZZ} again, we observe
the fact that
 the  Poisson kernel is an approximation to the identity which implies that, for any reasonable function $m(x)$
\begin{equation}
\begin{split}
m(x)&=\lim_{\epsilon\to 0^+}\frac1\pi \int_{\R} {\rm Im}\Big(\frac{1}{x-(y+i\epsilon)}\Big) m(y)dy
\\&=\lim_{\epsilon\to 0^+}\frac1\pi \int_{\R} \frac{\epsilon}{(x-y)^2+\epsilon^2} m(y)dy,
\end{split}
\end{equation}
then we have
\begin{equation}\label{id-spect}
\begin{split}
&\lim_{\epsilon\to 0^+}\frac{\lambda}{i\pi }\int_{\R^4} e^{-ix\cdot\xi}\Big(\frac{1}{|\xi|^2-(\lambda^2+i\epsilon)}-\frac{1}{|\xi|^2-(\lambda^2-i\epsilon)}\Big) d\xi\\
=&\lim_{\epsilon\to 0^+} \frac{\lambda}{\pi }\int_{\R^4} e^{-ix\cdot\xi}\Im\Big(\frac{1}{|\xi|^2-(\lambda^2+i\epsilon)}\Big)d\xi\\
=&\lim_{\epsilon\to 0^+} \frac{\lambda}{\pi }\int_{0}^\infty \frac{\epsilon}{(\rho^2-\lambda^2)^2+\epsilon^2} \int_{|\omega|=1} e^{-i\rho x\cdot\omega} d\sigma_\omega  \, \rho^{3} d\rho\\
=& \lambda^3 \int_{|\omega|=1} e^{-i\lambda x\cdot\omega} d\sigma_\omega.
\end{split}
\end{equation}
On the other hand, from \cite[Theorem 1.2.1]{sogge}, we also note that
\begin{equation}
\begin{split}
\int_{\mathbb{S}^{3}} e^{-i x\cdot\omega} d\sigma(\omega)=\sum_{\pm}  a_\pm(|x|) e^{\pm i|x|},
\end{split}
\end{equation}
where
\begin{equation}
\begin{split}
| \partial_r^k a_\pm(r)|\leq C_k(1+r)^{-\frac32-k},\quad k\geq 0.
\end{split}
\end{equation}
Therefore we finally obtain representation \eqref{kernel:sp}.

\end{proof}

\section{Strichartz estimates}
In this section, we prove Theorem \ref{thm:stri-S} and Theorem \ref{thm:stri-w} about the Strichartz estimates.

\subsection{The proof of Theorem \ref{thm:stri-S}}
From the dispersive estimates in Proposition \ref{prop:disper} and the $L^2$-estimate (obtained from the mass conservation law for Schr\"odinger equation or the unitary property of $e^{it\LL_{\A}}$),  the abstract method of Keel-Tao \cite{KT}
directly shows
\begin{theorem}\label{thm:stri0}
Let $\mathcal{L}_{{\A}}$ be in \eqref{LA} and let
$u(t,x)$ be a solution of Schr\"odinger equation
\begin{equation}\label{equ:S'}
\begin{cases}
i\partial_t u+\mathcal{L}_{{\A}} u=0,\qquad (t,x)\in\R\times\R^4\setminus\{0,0, x_3, x_4\}\\
u(0,x)=u_0(x),\quad x\in\R^4.
\end{cases}
\end{equation}
 Then there exists a constant $C$ such that
\begin{equation}\label{stri-LA}
\|u(t,x)\|_{L^{q}_{t}(\R;L_x^{r}(\mathbb{R}^4))}\leq C\|u_0\|_{L^2(\mathbb{R}^4)},
\end{equation}
where
 $(q,r)\in\Lambda_0^S$ defined in \eqref{adm}.

\end{theorem}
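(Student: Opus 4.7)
My plan is to obtain Theorem \ref{thm:stri0} by a direct application of the abstract Strichartz machinery of Keel--Tao \cite{KT} to the unitary group $U(t):=e^{it\mathcal{L}_{\A}}$ acting on $L^2(\R^4)$. The two hypotheses to be verified are the energy estimate and the untruncated dispersive estimate; both are already in hand from the preceding results, so the proof reduces to assembling them and reading off the admissible range.

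First, I will note that $\mathcal{L}_{\A}$, defined via the closure of its quadratic form on $C_c^\infty(\R^4\setminus\{x_1=x_2=0\})$, is a non-negative self-adjoint operator on $L^2(\R^4)$. Consequently, by Stone's theorem, $U(t)$ is a strongly continuous one-parameter group of unitary operators, which gives the energy bound
\begin{equation*}
\|U(t)f\|_{L^2_x(\R^4)}=\|f\|_{L^2_x(\R^4)},\qquad t\in\R.
\end{equation*}
Second, writing the Schwartz kernel of $U(t)U(s)^*$ as $e^{i(t-s)\mathcal{L}_{\A}}(x,y)$, Proposition \ref{prop:disper} delivers the untruncated dispersive estimate
\begin{equation*}
\|U(t)U(s)^*g\|_{L^\infty_x(\R^4)}\leq C\,|t-s|^{-2}\|g\|_{L^1_x(\R^4)},\qquad t\neq s.
\end{equation*}

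With these two inputs, the abstract theorem of Keel--Tao applies with decay parameter $\sigma=2$, yielding the Strichartz estimate \eqref{stri-LA} for every pair $(q,r)$ with $q,r\in[2,\infty]$, $(q,r,\sigma)\neq(2,\infty,1)$, and $\tfrac{2}{q}+\tfrac{2\sigma}{r}=\sigma$, i.e. $\tfrac{2}{q}=4\bigl(\tfrac{1}{2}-\tfrac{1}{r}\bigr)$. Since $\sigma=2>1$, the sharp endpoint $(q,r)=(2,4)$ is covered, and the condition $r<\infty$ built into $\Lambda_0^S$ removes the forbidden $(q,r)=(\infty,\infty)$ (which in any case is not admissible in dimension $4$). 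This exactly matches the admissible set $\Lambda_0^S$ defined in \eqref{adm}, completing the proof.

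I expect no serious obstacle: the heavy lifting, namely the global-in-time $|t|^{-2}$ kernel bound, has already been carried out in Proposition \ref{prop:disper} via the explicit representation \eqref{kernel:S}. The only subtlety worth flagging is ensuring self-adjointness of $\mathcal{L}_{\A}$ so that $e^{it\mathcal{L}_{\A}}$ is genuinely unitary; this is standard for the Friedrichs extension of the magnetic form $\int|(-i\nabla+\A)u|^2\,dx$ and is also implicit in the spectral-measure calculation of Proposition \ref{prop:spect}, so no extra work is required here.
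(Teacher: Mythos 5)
Your proposal is correct and follows exactly the paper's own route: the $L^2$ bound from unitarity of $e^{it\mathcal{L}_{\A}}$ together with the $|t|^{-2}$ dispersive estimate of Proposition \ref{prop:disper}, fed into the Keel--Tao abstract theorem with $\sigma=2$, which covers the endpoint $(2,4)$ and yields \eqref{stri-LA} for all $(q,r)\in\Lambda_0^S$. Your added remarks on self-adjointness and the admissible exponent bookkeeping are fine and introduce no gap.
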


\begin{remark} The Strichartz estimates \eqref{stri-LA} can be extended to the solution of
\begin{equation}\label{homo}
\begin{cases}
i\partial_t u+\mathcal{L}_{{\A},a} u=0,\qquad (t,x)\in\R\times\R^4\setminus\{0\},\\
u(0,x)=u_0(x),\quad x\in\R^4,
\end{cases}
\end{equation}
where the electromagnetic operator
$\LL_{{\A},a}=\Big(-i\nabla+{\A}(x)\Big)^2+a(\hat{x})|x|^{-2}$ and $x\in\R^4\setminus\{0\},\, \hat{x}\in \mathbb{S}^{3}$ and
$\min_{\hat{x}\in \mathbb{S}^3} a(\hat{x})>-1$. Indeed, this is a consequence of  \eqref{stri-LA} and the local smoothing estimates
\begin{equation}
\||x|^{-1}u(t,x)\|_{L^{2}_{t}(\R;L_x^{2}(\mathbb{R}^4))}\leq C\|u_0\|_{L^2(\mathbb{R}^4)},
\end{equation}
which is implied by
the weighted resolvent estimates in Barcel\'o-Vega-Zubeldia\cite{BVZ}
\begin{equation}
\sup_{\sigma\in\C\setminus[0,\infty)}\big\||x|^{-1}(\LL_{{\A},a}-\sigma)^{-1}|x|^{-1}\big\|_{L^{2}_{t}(\R;L_x^{2}(\mathbb{R}^4))}\leq C.
\end{equation}

\end{remark}

\begin{remark}  The method also works for dimension three in which
\begin{equation*}
{\A}(x)=\alpha\left(-\frac{x_2}{|x_1|^2+|x_2|^2},\frac{x_1}{|x_1|^2+|x_2|^2}, 0\right),\quad \alpha\in\R.
\end{equation*}
In this case, the endpoint Strichartz estimate \eqref{stri-LA} with $(q,r)=(2,6)$ is a generalization of \cite[(1.14)]{AFVV}.
\end{remark}

Now we prove Theorem \ref{thm:stri-S} by using \eqref{stri-LA}. Let $u(t,\vec x_1, \vec x_2)$ solve \eqref{equ:S}, from \eqref{changes}, then
\begin{equation}
v(t;\vec y_1, \vec y_2)= u\Big(t; \frac{\vec y_1+\vec y_2}{\sqrt{2}},\frac{\vec y_2-\vec y_1}{\sqrt{2}}\Big)
\end{equation}
solves \eqref{equ:S'}.
Therefore, by \eqref{equ:HAB} and \eqref{stri-LA}, we obtain for $(q,r)\in\Lambda_0^S$ defined in \eqref{adm}
\begin{equation}\label{u-v}
\begin{split}
\|u(t,\vec x_1,\vec x_2)\|_{L^q_t(\R;L^r_x(\R^4))}&\leq C\|v(t;\vec y_1, \vec y_2)\|_{L^q_t(\R;L^r_x(\R^4))}
\\&\leq C\|v_0(\vec y_1, \vec y_2)\|_{L^2(\R^4)}
\leq C\|u_0(\vec x_1, \vec x_2)\|_{L^2(\R^4)}.
\end{split}
\end{equation}
Thus, we complete the proof of Theorem \ref{thm:stri-S}.

\subsection{The proof of Theorem \ref{thm:stri-w}} By the same argument of \eqref{u-v}, to prove Theorem \ref{thm:stri-w}, it suffices to establish the Strichartz estimates
for the wave equation
\begin{equation}\label{equ:w'}
\begin{cases}
\partial_{tt} u+\mathcal{L}_{{\A}} u=0,\qquad (t,x)\in\R\times\R^4\setminus\{0,0, x_3, x_4\}\\
u(0,x)=f(x),\qquad \partial_t u(0,x)= g(x)\qquad x\in\R^4.
\end{cases}
\end{equation}

To this goal, we first prove the localized dispersive estimates for wave \eqref{equ:w'}.
\begin{proposition}[Dispersive estimate for wave]\label{prop:dis-w} Let $\LL_{\A}$ be given in \eqref{LA} and
let $\phi\in C_c^\infty([1/2, 2])$ take value in $[0,1]$ such that $1=\sum\limits_{k\in\Z}\phi(2^{-k}\lambda)$. Assume $f=\phi(2^{-k}\sqrt{\LL_{\A}})f$ with $k\in \Z$, then there exists a constant $C$ independent of $t$ and $k\in \Z$ such that
\begin{equation}\label{dispersive}
\begin{split}
\|e^{it\sqrt{\LL_{\A}}} f\|_{L^\infty(\R^4)}\leq C 2^{\frac52 k}(2^{-k}+|t|)^{-\frac32}\|f\|_{L^1(\R^4)}.
\end{split}
\end{equation}
\end{proposition}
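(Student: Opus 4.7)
The plan is to use the spectral measure representation in Proposition \ref{prop:spect} to write the kernel of the frequency-localized wave propagator, and then reduce the problem to a classical oscillatory integral estimate; this essentially recovers the dispersive bound for the free wave equation on $\R^4$ weighted by the angular factors $A_\alpha$ and $B_\alpha$.

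Setting $\phi_k(\lambda) = \phi(2^{-k}\lambda)$ and using $f = \phi_k(\sqrt{\LL_{\A}}) f$, the action of $e^{it\sqrt{\LL_{\A}}}$ on $f$ has kernel
\begin{equation*}
K_k(t;x,y) = \int_0^\infty e^{it\lambda} \phi_k(\lambda)\, dE_{\sqrt{\LL_{\A}}}(\lambda;x,y).
\end{equation*}
Plugging in \eqref{kernel:sp} decomposes $K_k = K_k^{(1)} + K_k^{(2)}$, where $K_k^{(1)}$ carries the factor $A_\alpha(\theta,\bar\theta)$ and a scalar oscillatory integral in $\lambda$ evaluated at $r = |x-y|$, while $K_k^{(2)}$ carries the $s$-integral against $B_\alpha(s,\theta,\bar\theta)$ with the same scalar integral evaluated at $r = |\vec{n}_s|$. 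Both pieces thus reduce to controlling the single scalar function
\begin{equation*}
W_k(t,r) := \sum_{\pm} \int_0^\infty e^{i\lambda(t \pm r)} a_\pm(\lambda r)\, \lambda^3 \phi_k(\lambda)\, d\lambda, \qquad r \geq 0.
\end{equation*}

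The heart of the argument is to prove the pointwise bound
\begin{equation*}
|W_k(t,r)| \leq C_N\, 2^{4k}(1 + 2^k r)^{-3/2}(1 + 2^k ||t| - r|)^{-N}, \quad N = 0, 1, 2, \dots.
\end{equation*}
I would rescale $\lambda = 2^k \mu$ so that $\mu$ lives on $\mathrm{supp}\,\phi \subset [1/2,2]$, and integrate by parts $N$ times in $\mu$ against $e^{i 2^k \mu(t \pm r)}$. The derivative estimates \eqref{bean:a} give $|\partial_\mu^j[a_\pm(2^k\mu r)]| \leq C_j \min\{1, (2^k r)^{-3/2}\}$ uniformly in $\mu \in \mathrm{supp}\,\phi$, so the Leibniz expansion of the resulting integrand is bounded by a constant multiple of $(1 + 2^k r)^{-3/2}$. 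A short case split (comparing $r$ with $|t|/2$, and using many derivatives in the regime $r \ll |t|$ versus extracting the $(1+2^k r)^{-3/2}$ factor directly in the regime $r \gtrsim |t|$) then converts this into the uniform-in-$r$ bound
\begin{equation*}
|W_k(t,r)| \leq C\, 2^{5k/2}(2^{-k} + |t|)^{-3/2},
\end{equation*}
which is the classical frequency-localized dispersive estimate for the free wave equation on $\R^4$.

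Granted this, the remaining step is purely algebraic. From \eqref{in-B} we have the uniform bounds
\begin{equation*}
|A_\alpha(\theta,\bar\theta)| + \int_0^\infty |B_\alpha(s,\theta,\bar\theta)|\, ds \leq C,
\end{equation*}
and hence
\begin{equation*}
\sup_{x,y \in \R^4} |K_k(t;x,y)| \leq C\, 2^{5k/2}(2^{-k} + |t|)^{-3/2},
\end{equation*}
which by the $L^1 \to L^\infty$ kernel estimate yields \eqref{dispersive}. The main technical obstacle is the oscillatory integral bound on $W_k$: one must carefully separate the regime $2^k r \lesssim 1$, where $a_\pm$ has no built-in decay but is smooth and uniformly bounded, from $2^k r \gtrsim 1$, where the $(1+r)^{-3/2}$ decay of $a_\pm$ is used to absorb the possible near-cancellation of the phase $t \pm r$ on the light cone. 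The contribution of $K_k^{(2)}$ introduces no extra difficulty, because $B_\alpha$ is absolutely integrable in $s$ and the bound on $W_k$ is already uniform in $r$; in particular, no further stationary-phase analysis in the $s$-variable is needed.
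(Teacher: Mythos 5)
Your proposal is correct and follows essentially the same route as the paper: representing the frequency-localized propagator via the spectral measure of Proposition \ref{prop:spect}, reducing both the $A_\alpha$ and $B_\alpha$ pieces to the scalar oscillatory integral in $\lambda$, bounding it by integration by parts together with the symbol estimates \eqref{bean:a}, splitting cases according to whether $r$ is comparable to $|t|$, and finishing with the uniform bounds \eqref{in-B} on $A_\alpha$ and $\int_0^\infty|B_\alpha|\,ds$. The only cosmetic difference is your rescaling $\lambda=2^k\mu$ before integrating by parts, which is equivalent to the paper's direct argument.
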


\begin{proof} With Proposition \ref{prop:spect} in hand, we use stationary phase argument to prove Proposition \ref{prop:dis-w}.
We write
 \begin{equation*}
e^{it\sqrt{\LL_{\A}}}f=\int_{\R^4} \int_0^\infty e^{it\lambda} \phi(2^{-k}\lambda) dE_{\sqrt{\LL_{\A}}}(\lambda; x, y) f(y) dy.
  \end{equation*}
Now we aim  to show kernel estimate
  \begin{equation}\label{est:k}
\Big|  \int_0^\infty e^{it\lambda} \phi(2^{-k}\lambda) dE_{\sqrt{\LL_{\A}}}(\lambda; x, y) \Big|\leq C 2^{\frac52 k}(2^{-k}+|t|)^{-\frac32}.
\end{equation}
For this purpose, from Proposition \ref{prop:spect}, it suffices to show
  \begin{equation}\label{est:k1}
\Big|  \int_0^\infty e^{it\lambda} \phi(2^{-k}\lambda)\lambda^3 a_\pm(\lambda |x-y|)e^{\pm i\lambda |x-y|} d\lambda
A_{\alpha}(\theta,\bar{\theta}) \Big|\leq C 2^{\frac52 k}(2^{-k}+|t|)^{-\frac32},
\end{equation}
and
  \begin{equation}\label{est:k2}
  \begin{split}
\Big|  \int_0^\infty e^{it\lambda} \phi(2^{-k}\lambda)\lambda^3 \int_0^\infty a_\pm(\lambda |\vec{n}_s|)e^{\pm i\lambda |\vec{n}_s|}
&B_{\alpha}(s,\theta,\bar{\theta}) ds \,d\lambda\Big|\\&\leq C 2^{\frac52 k}(2^{-k}+|t|)^{-\frac32},
\end{split}
\end{equation}
where $a_\pm$ satisfies \eqref{bean:a},
 $\vec{n}_s$ is in \eqref{equ:n}, and $A_\alpha(\theta,\bar{\theta})$, $B_\alpha(s,\theta,\bar{\theta})$ are in \eqref{A-al} and \eqref{B-al}.
Since $a_\pm$ satisfies \eqref{bean:a}, let $r=|x-y|$ or $|\vec{n}_s|$, hence
\begin{equation}\label{bean'}
 \big|\partial_\lambda^N [a_\pm(\lambda r)]\big|\leq C_N \lambda^{-N}(1+\lambda r)^{-\frac{3}2},\quad N\geq 0.
\end{equation}
We first prove \eqref{est:k1}. If $2^k|t\pm |x-y||\leq 1$, by \eqref{bean'}, we directly estimate
\begin{equation*}
\begin{split}
&\Big|  \int_0^\infty e^{it\lambda} \phi(2^{-k}\lambda)\lambda^3 a_\pm(\lambda |x-y|)e^{\pm i\lambda |x-y|} d\lambda\Big|\\ \leq&
C\int_{2^{k-1}}^{2^{k+1}}\lambda^3(1+\lambda
|x-y|)^{-3/2}d\lambda\\\leq&
C2^{4k}(1+2^k|x-y|)^{-3/2}.
\end{split}
\end{equation*}
 If $2^k|t\pm |x-y||\geq 1$, we use \eqref{bean'} again and perform $N$-times integration by parts to obtain
\begin{equation*}
\begin{split}
&\Big|  \int_0^\infty e^{it\lambda} \phi(2^{-k}\lambda)\lambda^3 a_\pm(\lambda |x-y|)e^{\pm i\lambda |x-y|} d\lambda\Big|\\
\leq& \Big|\int_0^\infty \left(\frac1{
i(t\pm |x-y|)}\frac\partial{\partial\lambda}\right)^{N}\big(e^{i(t\pm |x-y|)\lambda}\big)
\phi(2^{-k}\lambda)\lambda^3 a_\pm(\lambda |x-y|) d\lambda\Big|\\
 \leq&
C_N|t\pm |x-y||^{-N}\int_{2^{k-1}}^{2^{k+1}}\lambda^{3-N}(1+\lambda
|x-y|)^{-3/2}d\lambda\\
\leq&
C_N2^{k(4-N)}|t\pm |x-y||^{-N}(1+2^k|x-y|)^{-3/2}.
\end{split}
\end{equation*}
It follows that
\begin{equation}\label{dispersive2}
\begin{split}
&\Big|  \int_0^\infty e^{it\lambda} \phi(2^{-k}\lambda)\lambda^3 a_\pm(\lambda |x-y|)e^{\pm i\lambda |x-y|} d\lambda\Big|\\
\leq&
C_N2^{4k}\big(1+2^k|t\pm |x-y||\big)^{-N}(1+2^k |x-y|)^{-3/2}.
\end{split}
\end{equation}
If $|t|\sim |x-y|$, we see \eqref{est:k1}.
Otherwise, we have $|t\pm |x-y||\geq c|t|$ for some small constant
$c$, choose $N=1$ and $N=0$,  and then use geometric mean argument to obtain \eqref{est:k1}.\vspace{0.2cm}

We next prove \eqref{est:k2}. We follow the same lines to obtain
  \begin{equation}\label{est:k2-1}
  \begin{split}
\Big|  \int_0^\infty e^{it\lambda} \phi(2^{-k}\lambda)\lambda^3 &\int_0^\infty a_\pm(\lambda |\vec{n}_s|)e^{\pm i\lambda |\vec{n}_s|}
B_{\alpha}(s,\theta,\bar{\theta}) ds \,d\lambda\Big|\\&\leq C 2^{\frac52 k}(2^{-k}+|t|)^{-\frac32} \int_0^\infty |B_{\alpha}(s,\theta,\bar{\theta})| ds.
\end{split}
\end{equation}
From \eqref{in-B}, we have
\begin{equation}\label{bound-A}
\int_0^\infty |B_{\alpha}(s,\theta,\bar{\theta})| ds\leq C,
\end{equation}
which implies \eqref{est:k2}. Therefore we prove \eqref{est:k}, hence \eqref{dispersive} holds.

\end{proof}

We also need the following proposition to achieve our goal.
\begin{proposition}[A kernel estimate]\label{prop:L2} Let $\LL_{\A}$ be given in \eqref{LA} and
let $\psi\in C_c^\infty([1/2, 2])$ and take value in
$[0,1]$. For $k\in \Z$ and any $K\geq 0$, there exists a constant $C$ independent of $k\in \Z$ such that
\begin{equation}\label{est:L2-ker}
\begin{split}
\Big|\int^\infty_0 \psi(2^{-k}\lambda) dE_{\sqrt{\LL_{\A}}}(\lambda;x,y)\Big|\lesssim \frac{2^{4k}}{(1+2^k|x-y|)^{K}}.
\end{split}
\end{equation}
\end{proposition}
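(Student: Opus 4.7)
The plan is to substitute the explicit representation of the spectral measure from Proposition \ref{prop:spect} into the integral and repeat the integration-by-parts scheme used in Proposition \ref{prop:dis-w}, now without the time-dependent factor $e^{it\lambda}$; only the oscillations $e^{\pm i\lambda|x-y|}$ and $e^{\pm i\lambda|\vec{n}_s|}$ drive the decay in $|x-y|$. Split the kernel as
\begin{align*}
\int_0^\infty \psi(2^{-k}\lambda)\,dE_{\sqrt{\LL_{\A}}}(\lambda;x,y) = I_1 + I_2,
\end{align*}
where $I_1$ collects the $A_\alpha(\theta,\bar\theta)$ piece and $I_2$ collects the $s$-integrated $B_\alpha$ piece. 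After rescaling $\lambda = 2^k\mu$, both integrands are reduced to oscillatory integrals of the form
\begin{align*}
J(R) := \int_0^\infty \psi(\mu)\,\mu^3\,a_\pm(\mu R)\,e^{\pm i\mu R}\,d\mu,
\end{align*}
with $R=2^k|x-y|$ for $I_1$ and $R=2^k|\vec{n}_s|$ for $I_2$, so it suffices to show $|J(R)| \lesssim_K (1+R)^{-K}$ for every $K\ge 0$.

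For $J(R)$, distinguish the two regimes. If $R \le 1$, estimate trivially, using $|a_\pm(\mu R)|\le C$ on the support of $\psi$, to get $|J(R)|\le C$. If $R \ge 1$, integrate by parts $N$ times against the oscillation, gaining $R^{-N}$; the derivative bound \eqref{bean:a} gives $|\partial_\mu^N a_\pm(\mu R)| = R^N|a_\pm^{(N)}(\mu R)| \le C_N R^N(1+\mu R)^{-3/2-N}$, which on the support $\mu\sim 1$ (with $R\ge 1$) simplifies to $C_N R^{-3/2}$. After distributing derivatives by Leibniz over $\psi(\mu)\mu^3 a_\pm(\mu R)$, one finds $|J(R)| \lesssim_N R^{-N-3/2}$, and choosing $N$ large enough yields the desired polynomial decay. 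Together with the rescaling $d\lambda = 2^k d\mu$ and $\lambda^3 = 2^{3k}\mu^3$, this gives the prefactor $2^{4k}$ in the final estimate.

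The treatment of $I_2$ then hinges on the single geometric comparison
\begin{align*}
|\vec{n}_s|^2 - |x-y|^2 = 2r\bar{r}\bigl(\cosh s + \cos(\theta-\bar\theta)\bigr) \ge 0,
\end{align*}
which is immediate since $\cosh s \ge 1$ and $\cos(\theta-\bar\theta)\ge -1$. Hence $|\vec{n}_s|\ge |x-y|$ uniformly in $s\ge 0$, so the inner $\lambda$-integral for $I_2$ inherits the bound $2^{4k}(1+2^k|\vec{n}_s|)^{-K} \le 2^{4k}(1+2^k|x-y|)^{-K}$ from the estimate above. Integrating against $B_\alpha(s,\theta,\bar\theta)$ and using the uniform $s$-integrability \eqref{bound-A}, $\int_0^\infty |B_\alpha(s,\theta,\bar\theta)|\,ds \le C$, closes the argument. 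The only subtle step is precisely this comparison $|\vec{n}_s|\ge |x-y|$, which is what lets the diffraction contribution inherit the same decay rate in $|x-y|$ as the geometric contribution; everything else is the same stationary-phase bookkeeping as in Proposition \ref{prop:dis-w}, minus the $e^{it\lambda}$ factor.
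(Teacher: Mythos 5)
Your argument is correct and follows essentially the same route as the paper: reduce to unit frequency (your change of variables $\lambda=2^k\mu$ is equivalent to the paper's scaling identity for the spectral measure), integrate by parts in the frequency variable using the symbol bounds \eqref{bean:a} on $a_\pm$, and control the diffractive term via $|\vec{n}_s|\ge|x-y|$ together with the $s$-integrability \eqref{bound-A} of $B_\alpha$. The only cosmetic difference is that you verify $|\vec{n}_s|\ge|x-y|$ through the exact identity $|\vec{n}_s|^2-|x-y|^2=2r\bar{r}(\cosh s+\cos(\theta-\bar\theta))\ge0$, whereas the paper uses the cruder observation $|\vec{n}_s|^2\ge(r+\bar{r})^2+|x_3-y_3|^2+|x_4-y_4|^2\ge|x-y|^2$; both suffice.
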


\begin{proof}

On one hand, by  using scaling, we see
\begin{equation}\label{scaling-L2}
\begin{split}
\Big|\int^\infty_0 \psi(2^{-k}\lambda) dE_{\sqrt{\LL_{\A}}}(\lambda;x,y)\Big|=2^{4k}\Big|\int^\infty_0 \psi(\lambda) dE_{\sqrt{\LL_{\A}}}(\lambda;2^kx,2^ky)\Big|.
\end{split}
\end{equation}
On the other hand,  by  using Proposition \ref{prop:spect} and integration by parts K times, we have
\begin{equation*}
\begin{split}
\Big|\int^\infty_0 \psi(\lambda) dE_{\sqrt{\LL_{\A}}}(\lambda;x,y)\Big|\lesssim  \frac{1}{(1+|x-y|)^{K}}+\int_0^\infty \frac{1}{(1+|\vec{n}_s|)^{K}} |B_{\alpha}(s,\theta,\bar{\theta})| ds.
\end{split}
\end{equation*}
From \eqref{equ:n}, we observe that $|\vec{n}_s|^2\geq (r_1+r_2)^2+|x_3-y_3|^2+|x_4-y_4|^2\geq |x-y|^2$.
Thus it follows from \eqref{bound-A} that
\begin{equation*}
\begin{split}
\Big|\int^\infty_0 \psi(\lambda) dE_{\sqrt{\LL_{\A}}}(\lambda;x,y)\Big|\lesssim  \frac{1}{(1+|x-y|)^{K}}.
\end{split}
\end{equation*}
This together with \eqref{scaling-L2} gives \eqref{est:L2-ker}.

\end{proof}

Next we need the Littlewood-Paley square function inequality associated with the operator $\LL_{\A}$.
This can be done by following the proof of \cite[Proposition 2.2]{FZZ}, once we show the heat kernel estimate in Theorem \ref{thm:heat}.
Even $\LL_{\A}$ is slightly different from the operator $\LL_{{\A},0}$ in \cite{FZZ},
we can use the argument to prove \eqref{equ:heatlA}. Indeed, if one replaces the variable $t$ by $t/i$, by the analytic continuous argument, we can obtain \eqref{equ:heatlA} from Proposition \ref{prop:ker-S}. We omit the detail of the proof of Theorem \ref{thm:heat}.

Therefore, by the standard argument deriving Littlewood-Paley theory from the Gaussian boundedness of heat kernel (e.g. see \cite{FZZ}), we have
\begin{proposition}[LP square function inequality]\label{prop:squarefun} Let $\{\phi_k\}_{k\in\mathbb Z}$ be in Proposition \ref{prop:dis-w} and let $\LL_{\A}$ be given in \eqref{LA}.
Then for $1<p<\infty$,
there exist constants $c_p$ and $C_p$ depending on $p$ such that
\begin{equation}\label{square}
c_p\|f\|_{L^p(\R^4)}\leq
\Big\|\Big(\sum_{k\in\Z}|\phi_k(\sqrt{\LL_{{\A}}})f|^2\Big)^{\frac12}\Big\|_{L^p(\R^4)}\leq
C_p\|f\|_{L^p(\R^4)}.
\end{equation}

\end{proposition}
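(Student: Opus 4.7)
The plan is to deduce the square function inequality from the Gaussian heat kernel bound in Theorem \ref{thm:heat} by invoking the abstract framework that upgrades a Gaussian semigroup estimate to a H\"ormander-type spectral multiplier theorem. Since $\LL_{\A}$ is self-adjoint and non-negative on $L^2(\R^4)$, and satisfies
\begin{equation*}
\bigl|e^{-t\LL_{\A}}(x,y)\bigr| \leq C\, t^{-2} e^{-\frac{|x-y|^2}{4t}}, \qquad t>0,
\end{equation*}
the operator $\LL_{\A}$ falls within the scope of the Duong--Ouhabaz--Sikora (also Alexopoulos, Hebisch) multiplier theorem: any bounded Borel function $m:[0,\infty)\to\C$ with $\sup_{\tau>0}\|\eta(\cdot)\,m(\tau\cdot)\|_{W^{s,2}}<\infty$ for some $s>n/2=2$ and a fixed cutoff $\eta\in C_c^\infty((0,\infty))$ defines an operator $m(\sqrt{\LL_{\A}})$ bounded on $L^p(\R^4)$ for all $1<p<\infty$, with norm controlled by the multiplier seminorm. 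This is the single non-trivial input; everything else is formal.

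Given this, the upper bound in \eqref{square} follows by a Khintchine-randomization argument. Let $\{r_k(\tau)\}_{k\in\Z}$ be the Rademacher sequence on $[0,1]$ and set
\begin{equation*}
m_\tau(\lambda) := \sum_{k\in\Z} r_k(\tau)\,\phi_k(\lambda).
\end{equation*}
Because $\{\phi_k\}$ is a dyadic partition of unity on $(0,\infty)$, the function $m_\tau$ takes values in $\{-1,0,1\}$ on each dyadic annulus and satisfies the H\"ormander condition uniformly in $\tau$. Hence $m_\tau(\sqrt{\LL_{\A}})$ is bounded on $L^p(\R^4)$ with a bound independent of $\tau$. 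Integrating Khintchine's inequality pointwise in $x$ and then in $L^p$,
\begin{equation*}
\Bigl\|\Bigl(\sum_{k\in\Z}|\phi_k(\sqrt{\LL_{\A}})f|^2\Bigr)^{\frac12}\Bigr\|_{L^p}
\simeq_p \Bigl(\int_0^1 \bigl\|m_\tau(\sqrt{\LL_{\A}})f\bigr\|_{L^p}^p\,d\tau\Bigr)^{1/p}
\lesssim \|f\|_{L^p},
\end{equation*}
which is the right-hand inequality in \eqref{square}.

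For the reverse inequality, I would use the standard polarization/duality trick. Pick a slightly fattened partition $\{\tilde\phi_k\}$ with $\tilde\phi_k\phi_k=\phi_k$ and write $f=\sum_k \tilde\phi_k(\sqrt{\LL_{\A}})\phi_k(\sqrt{\LL_{\A}})f$. For any test function $g\in L^{p'}$,
\begin{equation*}
|\langle f,g\rangle| \leq \int_{\R^4}\Bigl(\sum_k|\phi_k(\sqrt{\LL_{\A}})f|^2\Bigr)^{\frac12}\Bigl(\sum_k|\tilde\phi_k(\sqrt{\LL_{\A}})g|^2\Bigr)^{\frac12}dx,
\end{equation*}
and H\"older together with the already proved upper bound applied to $\{\tilde\phi_k\}$ in $L^{p'}$ yields $\|f\|_{L^p}\lesssim \|(\sum_k|\phi_k(\sqrt{\LL_{\A}})f|^2)^{1/2}\|_{L^p}$.

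The only step that demands any work tied to the Aharonov--Bohm geometry is Theorem \ref{thm:heat}, which the authors obtain from Proposition \ref{prop:ker-S} via analytic continuation $t\mapsto t/i$ and the explicit control of $A_\alpha$ and $B_\alpha$; once that pointwise Gaussian bound is in place, the remaining argument is insensitive to the singular magnetic potential and transplants verbatim from the Euclidean theory, exactly as in \cite{FZZ}. I therefore expect no genuine obstacle beyond citing the multiplier theorem correctly.
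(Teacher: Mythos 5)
Your proposal is correct and follows essentially the same route as the paper, which simply invokes the standard argument (as in \cite{FZZ}) that upgrades the Gaussian heat kernel bound of Theorem \ref{thm:heat} to a H\"ormander-type spectral multiplier theorem and then derives \eqref{square} by Khintchine randomization and duality. You have merely written out explicitly the steps the paper cites, and no genuine gap is introduced in doing so.
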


For each $k\in \mathbb{Z}$, let $\phi_k(\lambda)=\phi(2^{-k}\lambda)$, we define
\begin{align}\label{propa}
U_k(t)=\int^\infty_0e^{it\lambda} \phi_k(\lambda) dE_{\sqrt{\LL_{\A}}}(\lambda;x,y),
\end{align}
where $dE_{\sqrt{\LL_{\A}}}(\lambda;x,y)$ is the spectral measure in Proposition \ref{prop:spect}.
Hence by spectral property (e.g. \cite[Lemma 5.3]{HZ}), we see
$$U_k(t)U_k^\ast(s)=\int^\infty_0e^{i(t-s)\lambda} \big(\phi_k(\lambda)\big)^2 dE_{\sqrt{\LL_{\A}}}(\lambda;x,y).$$
On one hand, by Proposition \ref{prop:dis-w}, we have
\begin{equation}
\|U_k(t)U_k^\ast(s)\|_{L^1\to L^\infty}\lesssim 2^{5k}(2^{-k}+|t-s|)^{-3}.
\end{equation}
On the other hand, by duality argument, we have
\begin{equation}
\|U(t)\|^2_{L^2\to L^2}=\|U_k(t)U_k^\ast(t)\|_{L^2\to L^2}\lesssim 1.
\end{equation}
Indeed, this is implied by
$$\Big|\int^\infty_0\big(\phi_k(\lambda)\big)^2 dE_{\sqrt{\LL_{\A}}}(\lambda;x,y)\Big|\lesssim \frac{2^{4k}}{(1+2^k|x-y|)^{N}}\in L^1(\R^4),$$
which follows from Proposition \eqref{prop:L2}.

Then using Keel-Tao's argument (also see \cite{KT}), for $(q,r)\in \Lambda^w_s$, we get
\begin{equation}
\|U_k(t)f\|_{L_{t}^{q}(\mathbb{R}L^r(\mathbb{R}^4))}\lesssim 2^{ks}\|f\|_{L^2(\mathbb{R}^4)}.
\end{equation}
 Let $f_j(x)=\phi(2^{-j}\sqrt{\LL_{\A}})f$, then
\begin{equation}\label{d-c}
e^{it\sqrt{\mathcal{L}_{\A}}}f(x)=\sum_{k\in\Z} U_k(t)f(x)=\sum_{j\in\Z}\sum_{k\in\Z} U_k(t)f_j(x).
\end{equation}
By using \eqref{square} and Minkowski's inequality, we show that
\begin{align}\label{4.21}
\|e^{it\sqrt{\mathcal{L}_{\A}}}f\|_{L_{t}^{q}(\mathbb{R}L^r(\mathbb{R}^4))}
\lesssim&\Big\|\big(\sum_{j\in\mathbb{Z}}|\sum_{k\in\mathbb{Z}}U_k(t)f_j|^2\big)^{1/2}\Big\|_{L_{t}^{q}(\mathbb{R}L^r(\mathbb{R}^4))}
\nonumber\\
\lesssim&\Big(\sum_{j\in\mathbb{Z}}\big\|\sum_{k\in\mathbb{Z}}U_k(t)f_j\big\|_{L_{t}^{q}(\mathbb{R}L^r(\mathbb{R}^4))}
^2\Big)^{1/2}.
\end{align}\label{4.22}
We observe that $\phi_k(\sqrt{\mathcal{L}_{\A}})f_j$ vanishes when $|j-k|\geq 10$, thus
\begin{align}\label{4.23}
\Big(\sum_{j\in\mathbb{Z}}\big\|\sum_{k\in\mathbb{Z}}U_k(t)f_j\big\|_{L_{t}^{q}(\mathbb{R}L^r(\mathbb{R}^4))}
^2\Big)^{1/2}\lesssim&\Big(\sum_{j\in\mathbb{Z}}\sum_{|j-k|\leq10}\|U_k(t)f_j\|_{L_{t}^{q}(\mathbb{R}L^r(\mathbb{R}^4))}
^2\Big)^{1/2}\nonumber\\
\lesssim&\Big(\sum_{j\in\mathbb{Z}}2^{js}\|f_j\|_{L^{2}(\mathbb{R}^4)}
^2\Big)^{1/2}=\|f\|_{\dot{H}_{\A}^s(\mathbb{R}^4)}.
\end{align}
Since
\begin{equation}\label{4.19}
u(t,x)=\frac{e^{it\sqrt{\mathcal{L}_{\A}}}+e^{-it\sqrt{\mathcal{L}_{\A}}}}2
f(x)+\frac{e^{it\sqrt{\mathcal{L}_{\A}}}-e^{-it\sqrt{\mathcal{L}_{\A}}}}{2i\sqrt{
\mathcal{L}_{\A}}}g(x),
\end{equation}
 we finally prove the Strichartz estimates
\begin{equation}
\|u(t,x)\|_{L_{t}^{q}(\mathbb{R}L^r(\mathbb{R}^4))}\lesssim\|f\|_{\dot{H}_{\A}^s(\mathbb{R}^4)}+\|g\|_{\dot{H}_{\A}^{s-1}(\mathbb{R}^4)}.
\end{equation}
By the equivalence of Sobolev norms \eqref{equ:esonorm1}, we finally obtain \eqref{stri-w}. The proof of Theorem \ref{thm:stri-w} is now complete.

\section{Scattering theory}
 We divide this section into three parts. The first part proves the global well-posedness for the solution to the equation \eqref{equ:nlsab4}. Then we show the classical Morawetz estimate by using the virial identity in \cite[Theorem1.2]{FV}. Finally, we prove Theorem \ref{thm:scatter} by following the idea in \cite{GV}.

\subsection{Global well-posedness}
By interpolating \eqref{DS} and mass conservation, we obtain the following dispersive estimates.
\begin{proposition}[Dispersive Estimate]\label{Dispersive}
Let $\alpha\in \R$ and $2\leq p\leq+\infty$. Then we have the following estimate
\begin{equation}\label{DS'}
\|e^{it\mathcal{L}_{{\A}}}f\|_{L^p(\R^4)}\leq C |t|^{-4(\frac12-\frac1p)}\|f\|_{L^{p'}(\R^4)},\quad \forall t\in\R\setminus\{0\}.
\end{equation}
for some constant $C=C(\alpha,p)>0$ which does not depend on $t$ and $f$.
\end{proposition}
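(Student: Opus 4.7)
The plan is to obtain the bound by Riesz--Thorin interpolation between two endpoint cases, which is precisely the phrasing used in the statement ``interpolating \eqref{DS} and mass conservation.''

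First I would record the two endpoint estimates. The $L^\infty$ endpoint is exactly Proposition \ref{prop:disper}: since $e^{it\LL_{\A}}f(x)=\int e^{it\LL_{\A}}(x,y)f(y)\,dy$, the kernel bound $|e^{it\LL_{\A}}(x,y)|\le C|t|^{-2}$ together with Young's inequality yields
\begin{equation*}
\|e^{it\LL_{\A}}f\|_{L^{\infty}(\R^4)}\le C|t|^{-2}\|f\|_{L^{1}(\R^4)}.
\end{equation*}
The $L^2$ endpoint follows from self-adjointness of $\LL_{\A}$ (together with the spectral theorem, and as reflected in conservation of mass for the associated Schr\"odinger flow): $e^{it\LL_{\A}}$ is a one-parameter unitary group, so
\begin{equation*}
\|e^{it\LL_{\A}}f\|_{L^{2}(\R^4)}=\|f\|_{L^{2}(\R^4)}.
\end{equation*}

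Next I would apply the Riesz--Thorin complex interpolation theorem to the linear operator $T_t:=e^{it\LL_{\A}}$, viewed as a bounded map $L^{p_j'}\to L^{p_j}$ with $(p_0,p_0')=(2,2)$ and $(p_1,p_1')=(\infty,1)$. For $\theta\in[0,1]$ the interpolated exponents are determined by $\frac{1}{p}=\frac{1-\theta}{2}+\frac{\theta}{\infty}$ and $\frac{1}{p'}=\frac{1-\theta}{2}+\theta$, which gives $\theta=2\bigl(\tfrac12-\tfrac1p\bigr)$. The corresponding operator-norm bound is $M_0^{1-\theta}M_1^{\theta}$ with $M_0=1$ and $M_1=C|t|^{-2}$, producing
\begin{equation*}
\|e^{it\LL_{\A}}f\|_{L^p(\R^4)}\le C^{\theta}\,|t|^{-2\theta}\,\|f\|_{L^{p'}(\R^4)}=C\,|t|^{-4(\frac12-\frac1p)}\|f\|_{L^{p'}(\R^4)},
\end{equation*}
uniformly in $t\ne0$ and $f$, which is exactly \eqref{DS'}.

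There is essentially no obstacle here: both endpoints are already in hand (Proposition \ref{prop:disper} provides the hard dispersive endpoint, and unitarity is immediate), and Riesz--Thorin applies directly because $e^{it\LL_{\A}}$ acts on functions via a measurable kernel, so it is well defined on the dense subspace $L^1\cap L^{\infty}$ on which one verifies the hypotheses of the interpolation theorem. The only thing worth being careful about is tracking the dependence of the constant $C=C(\alpha,p)$: it depends on $\alpha$ only through the constant in Proposition \ref{prop:disper}, and on $p$ only through the $\theta$ exponent of that constant, so the statement $C=C(\alpha,p)$ is justified.
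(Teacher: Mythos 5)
Your proposal is correct and is exactly the paper's argument: the paper proves this proposition in one line by interpolating the kernel bound \eqref{DS} of Proposition \ref{prop:disper} (giving the $L^1\to L^\infty$ endpoint with constant $C|t|^{-2}$) against the $L^2$ unitarity coming from mass conservation, which is precisely your Riesz--Thorin step with $\theta=2(\tfrac12-\tfrac1p)$. Your bookkeeping of the exponent and of the dependence $C=C(\alpha,p)$ is accurate, so there is nothing to add.
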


Then Strichartz estimates follow from a direct application of Keel-Tao's argument \cite{KT}.

\begin{proposition}[Strichartz estimate including inhomogeneous term]\label{Stri:inhomo}
Let $\mathcal{L}_{{\A}}$ be in \eqref{LA},
$u(t,x)$ be a solution of nonlinear Schr\"odinger equation
\begin{equation}\label{inhomo}
\begin{cases}
i\partial_t u+\mathcal{L}_{\A} u=F(t,x),\qquad (t,x)\in I\times\R^4,\\
u(t_0,x)=u_0,\quad x\in\R^4,
\end{cases}
\end{equation}
then for admissible pairs $(q,r),(\bar{q},\bar{r})\in\Lambda_0^S$,
there exists a constant $C$ such that
\begin{equation}\label{str1}
\|u(t,x)\|_{L^{q}_{t}(I\times L_x^r(\mathbb{R}^4))}\leq C\|u_0\|_{L^2(\mathbb{R}^4)}+\big\|
    F\big\|_{L_t^{\bar q'}L_x^{\bar r'}(I\times\R^4)}.
\end{equation}
\end{proposition}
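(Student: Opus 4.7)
The plan is to deduce this proposition as a direct application of the abstract machinery of Keel--Tao once the two basic ingredients are in place, namely (i) the $L^2$ unitarity of $e^{it\mathcal{L}_{\A}}$ and (ii) the dispersive estimate of Proposition \ref{Dispersive}. The $L^2$ bound is immediate from the self-adjointness of $\mathcal{L}_{\A}$ (equivalently, the mass conservation law for \eqref{equ:S'}), while the dispersive estimate in $L^p\to L^{p'}$ for $2\leq p\leq\infty$ is obtained by interpolating the pointwise kernel bound \eqref{DS} from Proposition \ref{prop:disper} with the trivial $L^2\to L^2$ bound, as already stated in \eqref{DS'}.

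Next, I would apply the abstract Keel--Tao theorem to the operator-valued one-parameter family $U(t)=e^{it\mathcal{L}_{\A}}$ with decay exponent $\sigma=n/2=2$ in ambient dimension $n=4$. Since $\sigma=2>1$, the full admissible range $\Lambda_0^S$ defined in \eqref{adm}, \emph{including} the endpoint $(q,r)=(2,4)$, is covered by Keel--Tao. This already yields the homogeneous bound
\begin{equation*}
\big\|e^{i(t-t_0)\mathcal{L}_{\A}}u_0\big\|_{L^q_t(I;L^r_x(\R^4))}\leq C\|u_0\|_{L^2(\R^4)}
\end{equation*}
for every $(q,r)\in\Lambda_0^S$ (this is precisely Theorem \ref{thm:stri0}).

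For the inhomogeneous part, I would start from the Duhamel representation
\begin{equation*}
u(t)=e^{i(t-t_0)\mathcal{L}_{\A}}u_0-i\int_{t_0}^t e^{i(t-s)\mathcal{L}_{\A}}F(s)\,ds,
\end{equation*}
and apply the bilinear $TT^\ast$-type estimate in the proof of Keel--Tao. Concretely, the dispersive estimate \eqref{DS'} combined with the $L^2$ isometry gives the hypothesis of the abstract bilinear lemma, from which one concludes
\begin{equation*}
\Big\|\int_{t_0}^t e^{i(t-s)\mathcal{L}_{\A}}F(s)\,ds\Big\|_{L^q_tL^r_x}\leq C\|F\|_{L^{\bar q'}_tL^{\bar r'}_x}
\end{equation*}
for all $(q,r),(\bar q,\bar r)\in\Lambda_0^S$. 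For non-endpoint pairs this can be proved in the standard way from the untruncated bilinear estimate together with the Christ--Kiselev lemma; for the double-endpoint case the argument of Keel--Tao applies directly thanks to $\sigma>1$. Adding the homogeneous and inhomogeneous contributions yields \eqref{str1}.

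The proof is essentially mechanical given our previous results, and the only mildly subtle point is the double-endpoint inhomogeneous estimate, which is handled by Keel--Tao's bilinear interpolation rather than by Christ--Kiselev; however, since the decay rate $2$ strictly exceeds $1$ in the dimension under consideration, the hypotheses of the abstract theorem are satisfied and no new obstruction appears. Hence I do not anticipate any substantive difficulty beyond invoking the Keel--Tao framework in the form already used to derive Theorem \ref{thm:stri0}.
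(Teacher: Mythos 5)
Your proposal is correct and follows essentially the same route as the paper: the paper also obtains \eqref{str1} by combining the dispersive estimate \eqref{DS'} (interpolation of the kernel bound \eqref{DS} with $L^2$ conservation) with the unitarity of $e^{it\mathcal{L}_{\A}}$ and then invoking the abstract Keel--Tao theorem, which with decay exponent $\sigma=2>1$ covers the full admissible set $\Lambda_0^S$, endpoints and retarded estimates included. The extra detail you give on the Duhamel term (bilinear $TT^\ast$/Christ--Kiselev versus the double endpoint) is a correct elaboration of what the paper leaves implicit.
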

\begin{remark}
From \eqref{str1} and \eqref{equ:esonorm1},  we obtain
\begin{equation}\label{instr}
\big\|\nabla_{\A}
u\big\|_{L_t^qL_x^r(I\times\R^4)}\lesssim\|u_0\|_{\dot
H_{\A}^1(\R^4)}+
    \big\|\nabla_{\A}
    F\big\|_{L_t^{\bar q'}L_x^{\bar r'}(I\times\R^4)},
\end{equation}
and
\begin{equation}\label{instr1}
\big\|\nabla
u\big\|_{L_t^qL_x^r(I\times\R^4)}\lesssim\|u_0\|_{\dot
H^1(\R^4)}+
    \big\|\nabla
    F\big\|_{L_t^{\bar q'}L_x^{\bar r'}(I\times\R^4)},\quad r<4.
\end{equation}
\end{remark}

With dispersive estimates \eqref{Dispersive}, Strichartz estimates \eqref{str1}-\eqref{instr1} and chain rules of non-integer order for operator $\LL_{\bf A}$  \eqref{chain:LA} in hand, we derive the following local well-posedness theory.
\begin{proposition}[Local well-posedness theory]\label{localwell}
Let $u_0\in H^1_{\A}(\R^4)$. Then there exists
$T=T(\|u_0\|_{H^1_{\A}})>0$ such that the equation
\eqref{equ:nlsab4} with initial data $u_0$ has a unique solution $u$ with
\begin{equation}\label{small}
u,~\nabla_{\bf A} u\in C(I; L^2(\R^4))\cap L_t^{q}(I;L^r(\R^4)),\quad
I=[0,T),
\end{equation}
with $(q,r)\in\Lambda_0^S $.
\end{proposition}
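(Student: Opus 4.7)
The plan is to prove Proposition \ref{localwell} by a standard contraction-mapping argument on the Duhamel reformulation of \eqref{equ:nlsab4}:
\begin{equation*}
u(t)=\Phi(u)(t):=e^{it\LL_{\A}}u_0-i\int_0^t e^{i(t-s)\LL_{\A}}\bigl(|u|^{p-1}u\bigr)(s)\,ds.
\end{equation*}
First, I would fix admissible pairs $(q,r),(\bar q,\bar r)\in\Lambda_0^S$ chosen so that the nonlinear terms close by Sobolev embedding and Hölder in time. A natural choice is $r=\tfrac{4p}{p+2}$, $q=\tfrac{2p}{p-2}$, which lies strictly inside the admissible range for $2<p<3$ (in particular $r<4$, so both \eqref{instr} and \eqref{instr1} apply) and is precisely the Sobolev conjugate of $L^{2p}$ in $\R^4$. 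For the inhomogeneous side one can couple this with $(\bar q,\bar r)=(\infty,2)$, so that $(\bar q',\bar r')=(1,2)$.

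Second, I would set up the complete metric space
\begin{equation*}
X_{T,M}=\Bigl\{u:\,\|u\|_{X}:=\|u\|_{L^\infty_t([0,T];H^1_{\A})\cap L^q_t([0,T];W^{1,r}_{\A})}\leq M\Bigr\},
\end{equation*}
with metric $d(u,v)=\|u-v\|_{L^\infty_t L^2_x\cap L^q_t L^r_x}$, and prove that $\Phi$ sends $X_{T,M}$ into itself and is a contraction when $T$ is small. Applying \eqref{str1}, \eqref{instr}, \eqref{instr1} and the equivalence of Sobolev norms \eqref{equ:esonorm1} reduces the bound on $\|\Phi(u)\|_X$ to controlling
\begin{equation*}
\||u|^{p-1}u\|_{L^1_t L^2_x}+\bigl\|\nabla\bigl(|u|^{p-1}u\bigr)\bigr\|_{L^1_t L^2_x}\lesssim T^{\theta}\|u\|_X^p
\end{equation*}
for some $\theta=\theta(p)>0$. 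This uses the pointwise bound $|\nabla(|u|^{p-1}u)|\lesssim|u|^{p-1}|\nabla u|$, Hölder in space together with the Sobolev embedding $W^{1,r}(\R^4)\hookrightarrow L^{2p}(\R^4)$ (exactly matching the scaling by the choice of $r$), and then Hölder in time, which extracts the positive power $T^{\theta}=T^{(4-p)/2}$ thanks to the energy-subcritical hypothesis $p<3$. A parallel difference estimate based on the pointwise inequality $\bigl||u|^{p-1}u-|v|^{p-1}v\bigr|\lesssim(|u|^{p-1}+|v|^{p-1})|u-v|$, combined with the chain rule \eqref{chain:LA} for the derivative version, gives contraction in $d$.

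Third, setting $M=2C\|u_0\|_{H^1_{\A}}$ and then choosing $T=T(\|u_0\|_{H^1_{\A}})>0$ small enough to force $CT^{\theta}M^{p-1}\leq\tfrac12$, the Banach fixed-point theorem supplies the unique solution $u\in X_{T,M}$ obeying \eqref{small}. Continuity of $u$ in $H^1_{\A}$ in the time variable then follows from the Duhamel representation together with dominated convergence, upgrading $u\in L^\infty_t H^1_{\A}$ to $u\in C_t H^1_{\A}$.

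The main obstacle I anticipate is the careful calibration of exponents under the twin constraints imposed by the magnetic structure. The inhomogeneous Strichartz estimate \eqref{instr} naturally uses the magnetic gradient $\nabla_{\A}$, whereas the chain rule \eqref{chain:LA} is transported through the flat derivative via \eqref{equ:esonorm1} and is only available in the range $1<p_i<4/s=4$; this forces $r<4$ in all $L^r$-Strichartz norms involved, ruling out the endpoint. In parallel, the Sobolev embeddings used to bound $|u|^{p-1}$ in $L^a_x$ must not cross the critical exponent $L^4=\dot H^1_{\A}(\R^4)$. The restriction $2<p<3$ ensures simultaneously that all Hölder/Sobolev exponents remain in the admissible ranges, and that the Hölder-in-time step yields a \emph{strictly positive} power of $T$; the latter is what permits the choice of $T=T(\|u_0\|_{H^1_{\A}})$ with no smallness assumption on $u_0$, and it is precisely this gain that disappears at the energy-critical threshold $p=3$.
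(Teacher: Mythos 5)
Your overall strategy (Duhamel plus Strichartz plus the equivalence \eqref{equ:esonorm1} and a Banach fixed point) is exactly the paper's, but your concrete exponent scheme does not close: the choice of the inhomogeneous dual pair $(\bar q',\bar r')=(1,2)$ is where the argument breaks. For the gradient piece you claim
$\bigl\|\nabla(|u|^{p-1}u)\bigr\|_{L^1_tL^2_x}\lesssim T^{\theta}\|u\|_X^p$
with $X=L^\infty_tH^1_{\A}\cap L^q_tW^{1,r}_{\A}$, $r=\tfrac{4p}{p+2}$. Any H\"older split $\bigl\||u|^{p-1}\nabla u\bigr\|_{L^2_x}\le\|u\|_{L^m_x}^{p-1}\|\nabla u\|_{L^b_x}$ requires $\tfrac{p-1}{m}+\tfrac1b=\tfrac12$, and since your norms only control $\nabla u$ in $L^b_x$ for $2\le b\le r<4$, you are forced to take $m\ge\tfrac{4p(p-1)}{p-2}$, which for $2<p<3$ is far beyond the integrability available ($H^1(\R^4)\hookrightarrow L^4$ and $W^{1,r}\hookrightarrow L^{2p}$ with $2p<6$). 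Even with the most favorable exponents you control, $u\in L^{2p}_x$ and $\nabla u\in L^r_x$, the product only lands in $L^{4/3}_x$ because $\tfrac{p-1}{2p}+\tfrac{p+2}{4p}=\tfrac34>\tfrac12$; so the $L^2_x$ bound on $\nabla(|u|^{p-1}u)$ is simply not attainable at the $H^1$ level, and no reshuffling of the H\"older/Sobolev steps rescues it. The same defect appears in your contraction step: with $\bar r'=2$ you need $\|u-v\|_{L^{2p}_x}$, which is not controlled by your metric $d(u,v)=\|u-v\|_{L^\infty_tL^2_x\cap L^q_tL^r_x}$ (and cannot be recovered without a derivative of the difference).

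The repair is the paper's calibration: work with the single admissible pair $(q_0,r_0)=\bigl(\tfrac{p+1}{p-1},p+1\bigr)\in\Lambda^S_0$ and estimate the nonlinearity in the dual norm $L^{q_0'}_tL^{r_0'}_x$ with $r_0'=\tfrac{p+1}{p}$, so that all spatial exponents sit at $p+1$: $\bigl\||u|^{p-1}\nabla_{\A}u\bigr\|_{L^{r_0'}_x}\le\|u\|_{L^{p+1}_x}^{p-1}\|\nabla_{\A}u\|_{L^{p+1}_x}$, with $\|u\|_{L^{p+1}_x}\lesssim\|u\|_{H^1_{\A}}$ available precisely because $p+1\le 4$; H\"older in time then produces the factor $T^{1-\frac{2}{q_0}}$, which is positive exactly when $p<3$ (your claimed gain $T^{(4-p)/2}$ from the pair $(1,2)$ is not obtainable). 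With this choice the contraction estimate also closes in the metric $\|u-v\|_{L^{q_0}_tL^{r_0}_x}$, since the $|u|^{p-1}$ factor is absorbed by $\|\nabla_{\A}u\|_{L^\infty_tL^2_x}$ via Sobolev. The rest of your outline (ball $B$ of radius $2C\|u_0\|_{H^1_{\A}}$, smallness of $T$ depending only on $\|u_0\|_{H^1_{\A}}$, upgrade to $C_tH^1_{\A}$) is fine once the exponents are fixed in this way.
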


\begin{proof}
We argue by the standard Banach fixed point argument. To this end, let
$$X(I)=C(I; L^2(\R^4))\cap L_t^{q_0}(I;L^{r_0}(\R^4)),\quad (q_0,r_0)=\big(\tfrac{p+1}{p-1},p+1\big),$$
we consider the map $$\Phi:u \mapsto  e^{it\mathcal{L}_{\A}} u_0  - i \int_0^t e^{i(t-s)\mathcal{L}_{\A}}
 |u(s)|^{p-1}
 u(s)\mathrm{d}s$$
 on the complete metric space
\begin{align*}
B:=\Bigl\{ u,~\nabla_A u\in C(I; L^2(\R^4))\cap L_t^{q_0}(I;L^{r_0}(\R^4)): \,
\|u\|_{X(I)},\|\nabla_{\A}u\|_{X(I)}
\leq 2C \|u_0\|_{H^1_{\A}} \Bigr\}
\end{align*}
and the metric
$$
d(u,v):= \| u -v \|_{L_t^{q_0}(I; L^{r_0}(\R^4))}.
$$
The constant $C$ depends only on the dimension and $\alpha$, and it
reflects implicit constants in the Strichartz and Sobolev embedding
inequalities.  We need to prove that the operator $\Phi$ is
well-defined on $B$ and is a contraction map under the metric $d$
for $I$.

Throughout the proof, all spacetime norms will be on $I\times\R^4$.
To see that $\Phi$ maps the ball $B$ to itself, we use the
Strichartz inequality in Theorem \ref{Stri:inhomo} and Sobolev embedding theorem and equivalent norms \eqref{equ:esonorm1} to arrive at
\begin{align*}
\|\Phi(u)\|_{X(I)}\leq&C\|u_0\|_{L_x^2}+C\big\||u|^{p-1}u\big\|_{L_t^{q'_0}L_x^{r'_0}}\\
\leq&C\|u_0\|_{L_x^2}+CT^{1-\frac2{q_0}}\|u\|_{L^\infty(I,L^{r_0})}^{p-1}\|u\|_{L^{q_0}(I,L^{r_0})}
\\ \leq&C\|u_0\|_{L_x^2}+CT^{1-\frac2{q_0}}\||\nabla|u\|_{L^\infty(I,L^2)}^{p-1}\|u\|_{L^{q_0}(I,L^{r_0})}\\ \leq&C\|u_0\|_{L_x^2}+CT^{1-\frac2{q_0}}\|\nabla_{\bf A}u\|_{L^\infty(I,L^2)}^{p-1}\|u\|_{L^{q_0}(I,L^{r_0})}\\
\leq&C\|u_0\|_{L_x^2}+CT^{1-\frac2{q_0}}\|\nabla_{\bf A}u\|_{X(I)}^{p-1}\|u\|_{X(I)}.
\end{align*}
 Noting that $\|u\|_{X(I)},~\|\nabla_{\A}u\|_{X(I)}\leq
 2C\|u_0\|_{H^1_{\A}}$, we see that for $u\in B$,
 $$\|\Phi(u)\|_{X(I)}\leq
 C\|u_0\|_{L_x^{2}}+CT^{1-\frac2{q_0}}\big(2C\|u_0\|_{H^1_{\A}}\big)^p.$$
 Since $1<p<3$, then
 \begin{equation}\label{range:p}
 1-\frac2{q_0}=1-\frac{2(p-1)}{p+1}>0.
 \end{equation}
Therefore, we take $T$ sufficiently small such that
$$T^{1-\frac2{q_0}}\big(2C\|u_0\|_{H^1_{\A}}\big)^p\leq \|u_0\|_{H^1_{\A}},$$
we have $\|\Phi(u)\|_{X(I)}\leq 2C\|u_0\|_{H^1_{\A}}$ for
$u\in B$. Similarly, by Strichartz estimate \eqref{instr}, chain rules \eqref{chain:LA}, equivalent norms \eqref{equ:esonorm1} and inequality \eqref{range:p}, we get
\begin{align*}
\big\|\nabla_{\A}
\Phi(u)\big\|_{X(I)}\leq&C\|\nabla_{\A}u_0\|_{L_x^2}+C\big\|\nabla_{\A}(|u|^{p-1}u)\big\|_{L^{q'_0}(I,L^{r'_0})}\\
\leq&C\|\nabla_{\A}u_0\|_{L_x^2}+CT^{1-\frac2{q_0}}\|u\|_{L^\infty(I,L^{r_0})}^{p-1}\|\nabla_{\bf A}u\|_{L^{q_0}(I,L^{r_0})}\\
\leq&C\|u_0\|_{\dot{H_{\bf A}^1}}+CT^{1-\frac2{q_0}}\||\nabla|u\|_{L^\infty(I,L^{2})}^{p-1}\|\nabla_{\bf A}u\|_{L^{q_0}(I,L^{r_0})}\\
\leq&C\|u_0\|_{L_x^2}+CT^{1-\frac2{q_0}}\|\nabla_{\bf A}u\|_{X(I)}^{p}.
\end{align*}
 This shows that $\Phi$ maps
the ball $B$ to itself.

 Finally, to prove that $\Phi$ is a
contraction, for $u,v\in B$, we argue as above
\begin{align*}
d\big(\Phi(u),\Phi(v)\big)\leq&C\big\||u|^{p-1}u-|v|^{p-1}v\big\|_{L_{t}^{q'_0}L_x^{r'_0}}\\
\leq&CT^{1-\frac2{q_0}}\|u-v\|_{L_t^{q_0} L_x^{r_0}}\big(\|\nabla_{\bf A}u\|_{L_t^\infty
L_x^{2}}^{p-1}+\|\nabla_{\bf A}v\|_{L_t^\infty L_x^{2}}^{p-1}\big)\\
\leq&2CT^{1-\frac2{q_0}}\big(2C\|u_0\|_{H^1_{\A}}\big)^{p-1}d(u,v)\\
\leq&\frac12d(u,v)
\end{align*}
by taking $T$ sufficiently small such that
$$2CT^{\frac12}\big(2C\|u_0\|_{H^1_{\A}}\big)^{p-1}\leq\frac12.$$

The standard fixed point argument gives a unique solution $u$ of
\eqref{equ:nlsab4} on $I\times\R^4$. By Strichartz estimate, we also get
the bound
$$\|u\|_{L_t^qL_x^r(I\times\R^4)}+\|\nabla_{\A}u\|_{L_t^qL_x^r(I\times\R^4)}\leq2C\|u_0\|_{H^1_{\A}(\R^4)},
\quad \forall~(q,r)\in\Lambda_0^S,\;r<4$$
\end{proof}

On the other hand, conservation laws for equation \eqref{equ:nlsab4} hold.

\begin{lemma}
[Conservation laws] If the solution $u$ for equation \eqref{equ:nlsab4} has sufficient decay at infinity and smoothness, it conserves the mass
\begin{equation}\label{Mass}
M(u)=\int_{\R^4}|u(t,x)|^2\mathrm{d}x=M(u_0)
\end{equation}
and energy
\begin{equation}\label{Energy}
E(u(t))=\frac12\int_{\R^4}|\nabla_{\A}u(t)|^2\mathrm{d}x+\frac1{p+1}\int_{\R^4}|u(t)|^{p+1}\mathrm{d}x=E(u_0).
\end{equation}
\end{lemma}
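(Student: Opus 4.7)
My plan is to derive both identities via the standard multiplier method: I would pair equation \eqref{equ:nlsab4} with $\bar u$ for the mass, and with $\partial_t\bar u$ for the energy. The key structural tools are the self-adjointness of $\mathcal{L}_{\A}=(-i\nabla+\A)^2$, which provides the integration-by-parts identity $\int \bar v\,\mathcal{L}_{\A}u\,dx=\int\overline{\nabla_{\A}v}\cdot\nabla_{\A}u\,dx$, and the smoothness/decay hypothesis on $u$, which eliminates all boundary contributions at infinity.

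For the mass, I would multiply the equation by $\bar u$ and integrate over $\R^4$. The term $\int\bar u\,\mathcal{L}_{\A}u\,dx=\int|\nabla_{\A}u|^2\,dx$ is real, and so is $\int|u|^{p+1}\,dx$. Taking imaginary parts leaves $\Re\int u_t\bar u\,dx=\tfrac12\partial_t\|u(t)\|_{L^2}^2=0$, which is \eqref{Mass}.

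For the energy, I would pair with $\partial_t\bar u$ and take real parts. The term $i\int|u_t|^2\,dx$ is purely imaginary and drops. Since $\A$ is time-independent, $\nabla_{\A}u_t=\partial_t\nabla_{\A}u$, and an integration by parts gives
\begin{equation*}
\Re\int_{\R^4}\bar u_t\,\mathcal{L}_{\A}u\,dx=\Re\int_{\R^4}\overline{\partial_t\nabla_{\A}u}\cdot\nabla_{\A}u\,dx=\tfrac12\partial_t\int_{\R^4}|\nabla_{\A}u|^2\,dx.
\end{equation*}
On the nonlinear side, the pointwise identity $\partial_t|u|^{p+1}=(p+1)|u|^{p-1}\Re(\bar u\,u_t)$ produces $\tfrac{1}{p+1}\partial_t\int|u|^{p+1}\,dx$. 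Assembling the two contributions yields $\partial_t E(u(t))=0$, i.e., \eqref{Energy}.

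The main obstacle will be justifying the above integrations by parts near the singular set $S_0=\{x_1=x_2=0\}\subset\R^4$, where $\A$ fails to be locally integrable; here one cannot \emph{a priori} discard boundary contributions. I plan a standard density/cutoff argument: carry out the identities first for $u\in C_c^\infty(\R^4\setminus S_0)$, where no such boundary terms appear, and then, using that $H^1_{\A}(\R^4)$ is by definition the completion of such functions in the $\nabla_{\A}$-norm \eqref{norm1}, together with continuity of the flow $u_0\mapsto u(t)$ from Proposition \ref{localwell}, pass to the limit to extend both identities to general finite-energy data. An equivalent route is to truncate by a smooth cutoff $\chi_\varepsilon$ vanishing in an $\varepsilon$-neighborhood of $S_0$, run the computations for $\chi_\varepsilon u$, and absorb the commutator $[\nabla_{\A},\chi_\varepsilon]u$ using $\int|\nabla_{\A}u|^2\,dx<\infty$ as $\varepsilon\downarrow 0$.
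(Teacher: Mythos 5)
Your proposal is correct and follows essentially the same route as the paper: the paper likewise computes $\tfrac{d}{dt}M$ by pairing the equation with $\bar u$ and taking real/imaginary parts, and $\tfrac{d}{dt}E$ by pairing with $\partial_t\bar u$ and integrating by parts via the self-adjointness of $\mathcal{L}_{\A}$. Your additional density/cutoff justification near the set $\{x_1=x_2=0\}$ goes slightly beyond the paper, which simply assumes ``sufficient decay at infinity and smoothness'' and argues formally.
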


\begin{proof}
From equation \eqref{equ:nlsab4} and using the integration by parts, we see
\begin{align*}
\frac{\mathrm{d}}{\mathrm{d}t}&M(u(t))=2\Re\int_{\R^4}u_t\bar{u}\mathrm{d}x=2\Re\int_{\R^4}i(\LL_{\A}u-|u|^{p-1}u)\bar{u}\mathrm{d}x\\
&=-2\Im\int_{\R^4}(\LL_{\A}u-|u|^{p-1}u)\bar{u}\mathrm{d}x=2\Im\int_{\R^4}|\nabla_{\A}u|^2\mathrm{d}x+2\Im\int_{\R^4}|u|^{p+1}\mathrm{d}x=0,
\end{align*}
which implies \eqref{Mass}.

A similar computation yields
\begin{equation*}
\frac{\mathrm{d}}{\mathrm{d}t}E(u(t))=\Re\int_{\R^4}\nabla_{\A}u_t\overline{\nabla_{\A}u}+|u|^{p-1}\bar{u}u_t\mathrm{d}x=0,
\end{equation*}
which implies \eqref{Energy}.
\end{proof}

Using the mass and energy conservation laws, we obtain
\begin{equation}\label{mecl}
\|u(t)\|_{H_{\A}^1(\R^4)}^2\leq M(u_0)+2E(u_0).
\end{equation}
Therefore, the global well-posedness follows from the local well-posedness theory and mass and energy conservation.



\subsection{Morawetz estimate}\label{mor} In this subsection, we derive the classical Morawetz estimate from the virial identity in Theorem \ref{thm:viride}.

\begin{proposition}[Morawetz estimate]\label{prop:morest}
Assume that $u:\;I\times\R^4\to\C$ solves  the equation \eqref{equ:nlsab4}, then there holds
\begin{equation}\label{equ:morest}
  \int_I\int_{\R^4}\frac{|u(t,x)|^{p+1}}{|x|}\;dx\;dt\leq C\big(M(u_0), E(u_0)\big).
\end{equation}
\end{proposition}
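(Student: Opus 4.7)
The plan is to apply the virial identity of Fanelli--Vega (\cite[Theorem 1.2]{FV}) with the radial weight $a(x) = |x|$ to the solution $u$ of \eqref{equ:nlsab4} and then integrate in time. Define the Morawetz action
\begin{equation*}
  M(t) := 2\,\Im \int_{\R^4} \bar{u}(t,x)\, \nabla_{{\A}} u(t,x) \cdot \frac{x}{|x|}\, dx.
\end{equation*}
By Cauchy--Schwarz together with the uniform $H^1_{{\A}}$ bound \eqref{mecl} coming from mass and energy conservation, one obtains the a priori control
\begin{equation*}
  \sup_{t \in I}|M(t)| \leq 2\,\|u(t)\|_{L^2}\, \|\nabla_{{\A}} u(t)\|_{L^2} \leq C\big(M(u_0), E(u_0)\big).
\end{equation*}

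Next I would compute $\tfrac{d}{dt}M(t)$ by inserting the equation \eqref{equ:nlsab4} and invoking the virial identity of \cite{FV}. For the weight $a(x)=|x|$ in $\R^4$, the relevant differential quantities are $\nabla a = x/|x|$, the non-negative Hessian $D^2 a = \frac{1}{|x|}(I - \omega\otimes\omega)$ with $\omega=x/|x|$, the Laplacian $\Delta a = 3/|x|$, and the bi-Laplacian $-\Delta^2 a = 3/|x|^3 \geq 0$. Substituting these into the Fanelli--Vega identity yields
\begin{align*}
  \frac{d}{dt}M(t) = &\; 4 \int_{\R^4} D^2 a\,\big(\overline{\nabla_{{\A}} u}, \nabla_{{\A}} u\big)\,dx + \int_{\R^4}(-\Delta^2 a)\,|u|^2\,dx \\
  &\; + \frac{2(p-1)}{p+1}\int_{\R^4}\Delta a\,|u|^{p+1}\,dx + \mathcal{M}(u,t),
\end{align*}
where $\mathcal{M}(u,t)$ is the magnetic contribution, which by \cite{FV} is controlled by the trapping component $B_\tau = \frac{x}{|x|}\wedge B$ of the magnetic field.

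The decisive structural input is the fact, already emphasized in the introduction, that for the potential ${\A}$ in \eqref{p:A} one has $B_\tau \equiv 0$; hence $\mathcal{M}(u,t)$ drops out. The first three terms on the right are then manifestly non-negative in the defocusing setting, so in particular
\begin{equation*}
  \frac{d}{dt}M(t) \geq \frac{6(p-1)}{p+1}\int_{\R^4}\frac{|u(t,x)|^{p+1}}{|x|}\,dx.
\end{equation*}
Integrating in $t$ over $I$ and combining with the uniform bound $\sup_{t\in I} |M(t)| \leq C(M(u_0),E(u_0))$ produces \eqref{equ:morest}.

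The main obstacle I anticipate is a rigorous justification of the virial identity at the singular set $\{x_1=x_2=0\}$, where ${\A}$ blows up, together with the non-smoothness of $|x|$ at the origin. This would be dealt with by a standard double regularization: replace $|x|$ by $\sqrt{|x|^2+\eps^2}$, approximate $u$ by smooth functions in the form domain of $\LL_{{\A}}$, derive the identity for the regularized objects, and then pass to the limit using the structural identity $B_\tau=0$ precisely to absorb the magnetic commutator terms produced by $-i\nabla+{\A}$ paired against $x/|x|$.
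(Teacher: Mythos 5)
Your argument is correct and is essentially the paper's own proof: the paper likewise deduces \eqref{equ:morest} from the Fanelli--Vega virial identity with the weight $a(x)=|x|$ (Theorem \ref{thm:viride}), using the positivity of $D^2a$, of $-\Delta^2 a=3/|x|^3$ and of $\Delta a=3/|x|$ in $\R^4$, the vanishing of the trapping/magnetic contribution for the potential \eqref{p:A}, and the uniform $H^1_{\A}$ bound \eqref{mecl} to control the Morawetz action before integrating in time. The only minor point is a convention slip: with the paper's convention $\nabla_{\A}=-i\nabla+\A$ the action should be $2\Re\int_{\R^4}\bar u\,\tfrac{x}{|x|}\cdot\nabla_{\A}u\,\mathrm{d}x$ rather than the imaginary part (your $\Im$ corresponds to the convention $\nabla+i\A$), which does not affect the argument.
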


Proposition \ref{prop:morest}  follows from Theorem \ref{thm:viride} with $a(x)=|x|$ below.
\begin{theorem}[Virial identity]\label{thm:viride}
Let $a:\mathbb{R}^4\rightarrow\R$ be a radial, real-valued multiplier, $a(x)=a(|x|)$and let
$$\Phi_a(t)=\int_{\mathbb{R}^4}a(x)|u|^2\mathrm{d}x.$$
Then for any solution $u$ of magnetic Schr\"odinger equation \eqref{equ:nlsab4} and initial datum $u_0\in L^2$, $\nabla_{\bf A}u_0\in L^2$, the following virial-type identity holds:
\begin{equation}\label{vir1}
\frac{\mathrm{d}^2}{\mathrm{d}t^2}\Phi_a(t)=4\int_{\mathbb{R}^4}\nabla_{\bf A}uD^2a\overline{\nabla_{\bf A}u}\mathrm{d}x-\int_{\mathbb{R}^4}|u|^2\Delta^2a\mathrm{d}x+\frac{2(p-1)}{p+1}\int_{\R^4}|u|^{p+1}\Delta  a\mathrm{d}x,
\end{equation}
where $$(D^2a)_{j,k}=\frac{\partial^2}{\partial x_j\partial x_k}a,\quad\Delta^2a=\Delta(\Delta a),$$
for $j,k=1,\cdots,4$ are respectively the Hessian matrix and the bi-Laplaican of $a$.
\end{theorem}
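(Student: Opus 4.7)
The plan is to run a direct time-differentiation of $\Phi_a(t)$ twice, using the equation \eqref{equ:nlsab4}, and to repeatedly integrate by parts so that all derivatives of $u$ are organized into the gauge-covariant gradient $\nabla_{\A}$ and all derivatives of $a$ collect into the Hessian $D^2 a$, the biharmonic $\Delta^2 a$, and the Laplacian $\Delta a$. The whole calculation rests on the covariant Leibniz rule
\[
\nabla_{\A}(au) = -i(\nabla a)\,u + a\,\nabla_{\A}u,
\]
valid because $a$ is real, together with the formal self-adjointness of each component $-i\partial_j + {\A}_j$. Since the Aharonov--Bohm potential \eqref{p:A} satisfies $\nabla\cdot{\A}=0$, no extra first-order terms are generated during integration by parts.

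For the first derivative, note that $|u|^{p-1}u\cdot\bar u = |u|^{p+1}$ is real, so the nonlinearity contributes nothing to $\partial_t|u|^2$ and we obtain $\partial_t|u|^2 = -2\Im(\bar u\,\LL_{\A}u)$ from the equation. Using the covariant Leibniz rule and self-adjointness,
\[
\int a\,\bar u\,\LL_{\A}u\,dx = \int a\,|\nabla_{\A}u|^2\,dx + i\int (\nabla a)\cdot\bar u\,\nabla_{\A}u\,dx,
\]
whose imaginary part is $\Re\int(\nabla a)\cdot\bar u\,\nabla_{\A}u\,dx$. Therefore
\[
\Phi_a'(t) = 2\int (\nabla a)\cdot \Re\bigl(\bar u\,\nabla_{\A}u\bigr)\,dx.
\]

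For the second derivative, I differentiate the above identity in time and substitute $\partial_t u$, $\partial_t\bar u$ from the equation. Two types of contributions appear. The kinetic part, coming from the $\LL_{\A}u$ piece, yields after several integrations by parts the two familiar free-space terms $4\int \nabla_{\A}u\,D^2 a\,\overline{\nabla_{\A}u}\,dx$ and $-\int |u|^2\Delta^2 a\,dx$, precisely because the covariant Leibniz rule lets us replace each bare $\nabla$ that acts on $u$ by $\nabla_{\A}$ at the cost of lower-order pieces that combine telescopically. The nonlinear part, generated by substituting $i|u|^{p-1}u$, produces an expression of the form $-\tfrac{2}{p+1}\int (\nabla a)\cdot\nabla(|u|^{p+1})\,dx$ (the exponent $p+1$ arising because $\partial_t(|u|^{p+1})$ and $\Re(\bar u|u|^{p-1}u) = |u|^{p+1}$ are treated symmetrically); one further integration by parts converts this into $\tfrac{2}{p+1}\int \Delta a\,|u|^{p+1}\,dx$, and an algebraic accounting of the factor of $p-1$ arising from the two different occurrences of the nonlinearity gives the stated coefficient $\tfrac{2(p-1)}{p+1}$.

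The main obstacle is the bookkeeping of magnetic cross terms. A priori many pieces involving $\A$, its divergence, and the magnetic field $B=\mathrm{curl}\,\A$ appear when one expands $\LL_{\A}$ and integrates by parts. The resolution is precisely the observation exploited in \cite{FV}: for the potential \eqref{p:A}, the trapping component $B_\tau=\tfrac{x}{|x|}\wedge B$ vanishes on $\R^4\setminus\{x_1=x_2=0\}$, and $\nabla\cdot\A=0$, so all cross terms either vanish or recombine into the single clean quadratic form $\nabla_{\A}u\,D^2 a\,\overline{\nabla_{\A}u}$. This is also why the virial identity \eqref{vir1} looks formally identical to its non-magnetic counterpart but with $\nabla$ replaced by $\nabla_{\A}$. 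As an alternative, the identity follows directly from \cite[Theorem~1.2]{FV} applied with our multiplier $a$ and defocusing nonlinearity $|u|^{p-1}u$.
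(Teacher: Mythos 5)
Your proposal is correct and takes essentially the same route as the paper: the paper organizes the two time differentiations through the commutators $[\LL_{\A},a]$ and $[\LL_{\A},[\LL_{\A},a]]$, quoting \cite[Theorem 1.2]{FV} for the kinetic/magnetic block (precisely the alternative you offer, with the magnetic cross terms disappearing because $a$ is radial, $\nabla\cdot{\A}=0$ and the trapping component $B_\tau$ vanishes), and computes the nonlinear contribution by the same integration by parts using $[\LL_{\A},a]=2\nabla a\cdot\nabla_{\A}+\Delta a$, arriving at the coefficient $\tfrac{2(p-1)}{p+1}$. There is no substantive difference between your direct-differentiation bookkeeping and the paper's commutator formulation.
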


 \begin{proof}
 From equation \eqref{equ:nlsab4}, we see that
 \begin{equation*}
 u_t=i(\LL_{\bf A}+|u|^{p-1})u.
 \end{equation*}
 Then a simple calculation yields
 \begin{equation*}
\frac{\;d \Phi_a(t)}{\;dt}=i\langle u,[\LL_{\bf A},a]u\rangle,
\end{equation*}
and
\begin{equation}\label{der2}
\frac{\;d^2 \Phi_a(t)}{\;dt^2}=-\langle u,[\LL_{\bf A},[\LL_{\bf A},a]]u\rangle-(\langle u,[\LL_{\bf A},a]|u|^{p-1}u\rangle+\langle|u|^{p-1}u,[\LL_{\bf A},a]u\rangle).
 \end{equation}
 Consider the first term in \eqref{der2}, using the virial identity in \cite[Theorem1.2]{FV}, we arrive at
 \begin{equation}\label{vir-1}
-\langle u,[\LL_{\bf A},[\LL_{\bf A},a]]u\rangle=4\int_{\mathbb{R}^4}\nabla_{\bf A}uD^2a\overline{\nabla_{\bf A}u}\mathrm{d}x-\int_{\mathbb{R}^4}|u|^2\Delta^2a\mathrm{d}x.
 \end{equation}
 Recalling that the commutator
 $$[\LL_{\bf A},a]=2\nabla a\cdot\nabla_{\bf A}+\Delta a$$
  (see\cite[(2.7)]{FV}), we obtain for the reminder term by using integration by parts
 \begin{align}\label{vir-2}
 &-\langle u,[\LL_{\bf A},a]|u|^{p-1}u\rangle+\langle|u|^{p-1}u,[\LL_{\bf A},a]u\rangle\nonumber\\
 =&-2\langle u,\nabla  a\cdot\nabla_{\bf A}(|u|^{p-1}u)\rangle+2\langle |u|^{p-1}u,\nabla  a\cdot\nabla_{\bf A}u\rangle\nonumber\\
 =&\frac{2(p-1)}{p+1}\int_{\R^4}|u|^{p+1}\Delta  a\mathrm{d}x.
 \end{align}
 Collecting \eqref{der2}-\eqref{vir-2} together, we finally conclude \eqref{vir1}.
 \end{proof}

\subsection{Scattering Theory} In this subsection, we aim to establish a scattering theory by following the idea in \cite{GV}.
\subsubsection{Decay of potential energy}\label{sze}
We will utilize the classical Morawetz estimate in Proposition \ref{prop:morest} and dispersive estimates \eqref{Dispersive} and Strichartz estimates \eqref{str1}-\eqref{instr1} that we obtained in the last two subsections to show the decay of potential energy.
\begin{proposition}\label{prop:pedecay}
Let $u:\;\R\times\R^4\to \C$ be the global solution to \eqref{equ:nlsab4}. Then, there holds
\begin{equation}\label{equ:poedecay}
  \lim_{t\to\pm\infty}\|u(t,\cdot)\|_{L_x^{p+1}(\R^4)}=0.
\end{equation}
By interpolating with mass and energy, we obtain
\begin{equation}\label{equ:poedecayg}
 \lim_{t\to\pm\infty}\|u(t,\cdot)\|_{L_x^r(\R^4)}=0,\quad \forall\;2<r<p+1.
\end{equation}

\end{proposition}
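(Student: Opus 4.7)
The plan is to follow the classical Ginibre--Velo strategy: combine the Morawetz bound of Proposition \ref{prop:morest} with the uniform $H^1_{\A}$ a priori estimate coming from the conservation laws, and then upgrade the integrated-in-time information to a pointwise-in-$t$ decay statement.

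First I would introduce the time function
\begin{equation*}
f(t):=\int_{\R^4}\frac{|u(t,x)|^{p+1}}{|x|}\,dx,
\end{equation*}
so that Proposition \ref{prop:morest} is precisely $f\in L^1(\R)$. The first step is to upgrade this to $f(t)\to 0$ as $|t|\to\infty$, which I would do by showing $f$ is uniformly continuous on $\R$ and then invoking a Barbalat-type lemma. Uniform continuity follows by differentiating $f$ in $t$, substituting \eqref{equ:nlsab4}, integrating by parts, and bounding the resulting expression by a polynomial in $\|u(t)\|_{H^1_{\A}}$ and $\|u(t)\|_{L^{2p}}$; both are controlled uniformly in $t$ via conservation, via the Sobolev embedding $H^1_{\A}(\R^4)\hookrightarrow L^4(\R^4)$ coming from \eqref{equ:esonorm1}, and via Gagliardo--Nirenberg interpolation.

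To pass from $f(t)\to 0$ to the desired $\|u(t)\|_{L^{p+1}}\to 0$, I would split the spatial integral at a radius $R$,
\begin{equation*}
\|u(t)\|_{L^{p+1}}^{p+1}\leq R\,f(t)+\int_{|x|>R}|u(t,x)|^{p+1}\,dx,
\end{equation*}
so that the near-field term vanishes for every fixed $R$. For the far-field term, the elementary interpolation (valid because $2<p+1<4$)
\begin{equation*}
\int_{|x|>R}|u|^{p+1}\,dx\leq \|u(t)\|_{L^4(|x|>R)}^{2p-2}\|u(t)\|_{L^2(|x|>R)}^{3-p},
\end{equation*}
combined with $\|u(t)\|_{L^4}\lesssim \|u(t)\|_{H^1_{\A}}\lesssim 1$, reduces matters to establishing a uniform-in-$t$ tightness of the mass: $\lim_{R\to\infty}\sup_{t\in\R}\|u(t)\|_{L^2(|x|>R)}=0$. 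This tightness is the principal obstacle, since for the linear Schr\"odinger flow it is false; in our setting I would argue it by contradiction, using a Lieb-type concentration-compactness lemma on a sequence $t_n\to\infty$. A finite-center profile is ruled out directly by $f(t_n)\to 0$ (weak convergence together with Fatou forces the profile's mass to live where $|x|=\infty$), while an escaping profile $|x_n|\to\infty$ is excluded by combining the equation, the defocusing structure, and the uniform Strichartz bounds of Theorem \ref{thm:stri0} applied to the nonlinear Duhamel term at $t_n$.

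Finally, the second assertion \eqref{equ:poedecayg} is immediate from \eqref{equ:poedecay} by log-convex interpolation between $L^2$ and $L^{p+1}$: for $2<r<p+1$, writing $\tfrac{1}{r}=\tfrac{1-\theta}{2}+\tfrac{\theta}{p+1}$ with $\theta\in(0,1)$,
\begin{equation*}
\|u(t)\|_{L^r}\leq \|u(t)\|_{L^2}^{1-\theta}\,\|u(t)\|_{L^{p+1}}^{\theta},
\end{equation*}
where the first factor is the conserved mass $\|u_0\|_{L^2}$ and the second factor tends to $0$.
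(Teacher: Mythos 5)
Your reduction of the far-field term to the uniform-in-time tightness statement $\lim_{R\to\infty}\sup_{t\in\R}\|u(t)\|_{L^2(|x|>R)}=0$ is the fatal step: that statement is false for every nonzero solution in the regime you are in. Indeed, the conclusion of Proposition \ref{prop:pedecay} (and the scattering it leads to) forces $u(t)\approx e^{it\LL_{\A}}u_+$ for large $t$, and then the dispersive decay gives $\|u(t)\|_{L^2(|x|\le R)}\to 0$ as $t\to+\infty$ for each fixed $R$; by mass conservation $\|u(t)\|_{L^2(|x|>R)}\to \|u_0\|_{L^2}$, so $\sup_t\|u(t)\|_{L^2(|x|>R)}=\|u_0\|_{L^2}$ for every $R$. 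The ``escaping profile'' you propose to exclude by concentration--compactness, the defocusing sign and Theorem \ref{thm:stri0} is not a pathological scenario to be ruled out: it is exactly what the solution does, so no such argument can close. This is precisely why the paper does not split at a fixed radius $R$: it splits at the \emph{time-dependent} radius $|x|\le t\log t$. There, a cutoff computation with $\theta_M$ and $M=t\log t$ shows the mass in $\{|x|>t\log t\}$ tends to $0$ (the error is $Ct/M=C/\log t$), while the Morawetz bound of Proposition \ref{prop:morest} is exploited only on $\{|x|\le s\log s\}$, where $|x|^{-1}\ge (s\log s)^{-1}$ and the divergence of $\sum_k\big[(t+2(k+1)\tau)\log(t+2(k+1)\tau)\big]^{-1}$ produces time windows on which $\int\int_{|x|\le s\log s}|u|^{p+1}$ is small; Duhamel, the $|t|^{-2}$ dispersive estimate and a bootstrap in $\tau_\epsilon$ then give \eqref{equ:poedecay}.

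Your preliminary step is also unsupported. To run Barbalat's lemma you claim $f(t)=\int|u|^{p+1}|x|^{-1}dx$ is uniformly continuous with bounds ``polynomial in $\|u\|_{H^1_{\A}}$ and $\|u\|_{L^{2p}}$'', but in $\R^4$ the embedding from \eqref{equ:esonorm1} gives $H^1_{\A}\hookrightarrow L^q$ only for $2\le q\le 4$, whereas $2p\in(4,6)$ for $2<p<3$; Gagliardo--Nirenberg between $L^2$ and $L^4$ cannot reach $L^{2p}$, so $\|u(t)\|_{L^{2p}}$ is not an a priori bounded quantity. Moreover, differentiating $f$ and integrating by parts produces terms such as $\int |u|^{p}|\nabla_{\A}u|\,|x|^{-2}dx$, whose weight exceeds what Hardy-type inequalities control with $H^1(\R^4)$ data (even the pointwise finiteness of $f(t)$ is not immediate from the conserved quantities; only its time integral is controlled by Proposition \ref{prop:morest}). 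So the passage from $f\in L^1_t$ to $f(t)\to 0$ is a genuine gap as well. The only part of your proposal that stands as written is the final interpolation giving \eqref{equ:poedecayg} from \eqref{equ:poedecay} and mass conservation, which coincides with the paper's remark.
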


\begin{proof}
We adapt Ginibre-Velo's method in \cite{GV} to show this proposition. For completeness, we give the detailed proof. We only need to show the positive time direction.

  We have
  \begin{equation}\label{equ:l42p}
    \int_{\R^4}|u(t,x)|^{p+1}\;dx= \int_{|x|\leq t\log t}|u(t,x)|^{p+1}\;dx+ \int_{|x|> t\log t}|u(t,x)|^{p+1}\;dx.
  \end{equation}

{\bf Step 1:} In this step, we will show
\begin{equation}\label{equ:step1large}
   \lim_{t\to\infty}\int_{|x|> t\log t}|u(t,x)|^{p+1}\;dx=0.
\end{equation}
  To do this, for given $M>0$, we define the smoothing function
  \begin{equation}\label{equ:defthem}
    \theta_M(x)= \begin{cases}
    \frac{|x|}{M}\quad \text{if}\quad |x|\leq M\\
    1\quad \text{if}\quad |x|\leq 2M.
    \end{cases}
  \end{equation}
Then, it is easy to check that
$$\theta_M\in W^{1,\infty}(\R^4),\quad \|\nabla \theta_M\|_\infty\lesssim \frac1M,\quad \theta_M u\in C(\R,H^1_{\bf A}(\R^4)).$$
By \eqref{equ:nlsab4} and a simple computation, we get
\begin{align*}
  \frac12\frac{d}{dt}\int_{\R^4}\theta_M(x)|u(t,x)|^2\;dx =&{\rm Re}\int_{\R^3}\theta_Mu_t\bar{u}\;dx\\
  =&{\rm Re}\int_{\R^4}\theta_M\bar{u}\big[i\LL_{\A}u+i|u|^{p-1}u\big]\;dx\\
  =&-{\rm Im}\int_{\R^4}\theta_M\bar{u}\LL_{\A}u\;dx\\
  =&{\rm Im}\int_{\R^4}(-i\nabla+{\bf A})\theta_M\cdot(i\nabla+{\bf A})u \bar{u}\;dx.
\end{align*}
  Hence
$$\Big| \frac{d}{dt}\int_{\R^4}\theta_M(x)|u(t,x)|^2\;dx \Big|\lesssim \big\|(-i\nabla+{\bf A})\theta_M  \big\|_\infty
\cdot\|u\|_{L_x^2}\cdot\big\|(i\nabla+{\bf A})u  \big\|_{L_x^2}\leq \frac{C}{M}.$$
  Integrating in time, we obtain
\begin{equation}\label{equ:thems}
  \int_{\R^4}\theta_M(x)|u(t,x)|^2\;dx\leq \frac{C}{M}t+\int_{\R^4}\theta_M(x)|u_0(x)|^2\;dx,\quad \forall\;t>0.
\end{equation}
  Taking $M=t\log t$, we have
\begin{align*}
 \int_{|x|> t\log t}|u(t,x)|^2\;dx \lesssim & \frac{1}{\log t}+\int_{\R^3}\theta_M(x)|u_0(x)|^2\;dx\\
 \lesssim & \frac{1}{\log t}+\int_{|x|\leq t\log t}\frac{|x|}{t\log t}|u_0(x)|^2\;dx+\int_{|x|> t\log t}|u_0(x)|^2\;dx\\
 \lesssim & \frac{1}{\log t}+\frac{1}{\sqrt{t\log t}}\int_{|x|\leq \sqrt{t\log t}}|u_0(x)|^2\;dx+\int_{|x|> \sqrt{t\log t}}|u_0(x)|^2\;dx\\
 \to& 0\qquad \text{as}\quad t\to\infty.
\end{align*}
Since $3<p+1<4$, so \eqref{equ:step1large} follows by  interpolating the above inequality with $\|u(t,x)\|_{L_x^4}\lesssim\|u(t,x)\|_{H_{\A}^1}\lesssim1$.

%

{\bf Step 2:} By Morawetz estimate \eqref{equ:morest}, we derive that for any $\epsilon>0,\;t>1$ and $\tau>0$, there exists $t_0>\max\{t,2\tau\}$ such that
\begin{equation}\label{equ:l4intt}
  \int_{t_0-2\tau}^{t_0}\int_{|x|\leq s\log s}|u(s,x)|^{p+1}\;dx\;ds<\epsilon.
\end{equation}
Actually, by Morawetz estimate \eqref{equ:morest}, we see that
\begin{align*}
\infty>&\int_1^\infty\int_{\R^4}\frac{|u(s,x)|^{p+1}}{|x|}\;dx\;ds>\int_1^\infty\frac1{s\log s}\int_{|x|\leq s\log s}|u(s,x)|^{p+1}\;dx\;ds\\
=&\sum_{k=0}^\infty\int_{t+2k\tau}^{t+2(k+1)\tau}\frac1{s\log s}\int_{|x|\leq s\log s}|u(s,x)|^{p+1}\;dx\;ds\\
\geq&\sum_{k=0}^\infty\frac1{(t+2(k+1)\tau)\log (t+2(k+1)\tau)}\int_{t+2k\tau}^{t+2(k+1)\tau}\int_{|x|\leq s\log s}|u(s,x)|^{p+1}\;dx\;ds.
\end{align*}
We note that
$$\sum_{k=0}^\infty\frac1{(t+2(k+1)\tau)\log (t+2(k+1)\tau)}=+\infty,$$
then there exists $k>0$ such that
$$\int_{t+2k\tau}^{t+2(k+1)\tau}\int_{|x|\leq s\log s}|u(s,x)|^{p+1}\;dx\;ds\leq\epsilon,$$
therefore, \eqref{equ:l4intt} holds by taking $t_0=t+2(k+1)\tau$.

{\bf Step 3:} We claim that for any $\epsilon>0$ and $b>0$, there exists $t_0>b$ such that
\begin{equation}\label{equ:t0clsoe}
  \sup_{s\in[t_0-b,t_0]}\|u(s,\cdot)\|_{L_x^{p+1}(\R^4)}<\epsilon.
\end{equation}
  Taking $t>\tau>0$, we have by Duhamel's formula
  \begin{align*}
    u(t)= & e^{it\LL_{\A}}u_0-i\int_0^{t-\tau}e^{i(t-s)\LL_{\A}}(|u|^{p-1}u)(s)\;ds-i\int_{t-\tau}^te^{i(t-s)\LL_{\A}}(|u|^{p-1}u)(s)\;ds\\
    \triangleq& v(t)+w(t,\tau)+z(t,\tau).
  \end{align*}
First, by density $\mathcal{S}(\R^4)\hookrightarrow L^p(\R^4)$ and the dispersive estimate, we obtain
\begin{equation}\label{equ:dispde}
  \lim_{t\to\infty}\|v(t)\|_{L^{p+1}_x(\R^4)}=0.
\end{equation}

{\bf Estimate of $w(t,\tau)$:} Using dispersive estimate and $\|u\|_{L_x^{p+1}}\lesssim\|u\|_{H^1}\lesssim1$, we get
\begin{equation}\label{equ:winfes}
  \|w(t,\tau)\|_{L_x^\infty}\lesssim\int_0^{t-\tau}(t-s)^{-2}\|u\|_{L_x^{p+1}}^{p+1}\;ds\lesssim \tau^{-1}.
\end{equation}
On the other hand, we have
$$w(t,\tau)=e^{i\tau\LL_{\A}}u(t-\tau)-e^{it\LL_{\A}}u_0.$$
This implies
$$\|w(t,\tau)\|_{L_x^2}\lesssim \|u_0\|_{L_x^2}\lesssim1.$$
Interpolating this with \eqref{equ:winfes} yields that
\begin{equation}\label{equ:wl4est}
  \|w(t,\tau)\|_{L_x^{p+1}}\leq K\tau^{-\frac{p-1}{p+1}}
\end{equation}
  for some $K>0$.

{\bf Estimate of $z(t,\tau)$:}
Using Minkowski's inequality, the dispersive estimate and H\"older's inequality and the boundedness of potential energy, one has
\begin{align*}
  \|z(t,\tau)\|_{L_x^{p+1}}\lesssim & \int_{t-\tau}^t(t-s)^{-\frac{2(p-1)}{p+1}}\|u\|_{L_x^{p+1}}^p\;ds\\
  \lesssim&\Big(\int_{t-\tau}^t(t-s)^{-\frac{3p-1}{2(p+1)}}\;ds\Big)^\frac{4(p-1)}{3p-1}\Big(\int_{t-\tau}^t\|u\|_{L_x^{p+1}}^{\frac{3p-1}{3-p}p}
  \;ds\Big)^\frac{3-p}{3(p-1)}\\
  \lesssim&\tau^{\frac{2(p-1)(3-p)}{(p+1)(3p-1)}}\Big(\int_{t-\tau}^t\|u\|_{L_x^{p+1}}^{p+1}\;ds\Big)^\frac{3-p}{3(p-1)}\|u\|_{L_t^\infty L_x^{p+1}}^{\frac{4p^2-3p-3}{3(p-1)}}\\
  \leq&L\tau^{\frac{2(p-1)(3-p)}{(p+1)(3p-1)}+\frac{3-p}{3(p-1)}}\Big(\sup_{s>t-\tau}\int_{|x|>s\log s}|u(s,x)|^{p+1}\;dx\Big)^\frac{3-p}{3(p-1)}\\
  &+L\tau^{\frac{2(p-1)(3-p)}{(p+1)(3p-1)}}
  \Big(\int_{t-\tau}^t\int_{|x|\leq s\log s}|u(s,x)|^{p+1}\;dx\;ds\Big)^\frac{3-p}{3(p-1)},
\end{align*}
for some $L>0$.

Therefore,
\begin{align}\nonumber
  \|u(t)\|_{L_x^{p+1}}\leq  & \|v(t)\|_{L_x^{p+1}}+K\tau^{-\frac{p-1}{p+1}}\\\nonumber
  &+L\tau^{\frac{2(p-1)(3-p)}{(p+1)(3p-1)}+\frac{3-p}{3(p-1)}}\Big(\sup_{s>t-\tau}\int_{|x|>s\log s}|u(s,x)|^{p+1}\;dx\Big)^\frac{3-p}{3(p-1)}\\\label{equ:utestex}
  &+L\tau^{\frac{2(p-1)(3-p)}{(p+1)(3p-1)}}
  \Big(\int_{t-\tau}^t\int_{|x|\leq s\log s}|u(s,x)|^{p+1}\;dx\;ds\Big)^\frac{3-p}{3(p-1)}.
\end{align}
For any $\epsilon>0$, there exists $\tau_\epsilon>b$ such that for any $\tau>\tau_\epsilon$ s.t.
\begin{equation}\label{equ:ktauessm}
  K\tau^{-\frac{p-1}{p+1}}\leq K\tau_\epsilon^{-\frac{p-1}{p+1}}=\frac{\epsilon}{4}.
\end{equation}
On the other hand, by Step 1 and \eqref{equ:dispde}, we deduce that there exists $t_1>\tau_\epsilon$ such that for any $t\geq t_1$
\begin{equation}\label{equ:vtlta}
  \|v(t)\|_{L_x^{p+1}}+L\tau_\epsilon^{\frac{2(p-1)(3-p)}{(p+1)(3p-1)}+\frac{3-p}{3(p-1)}}\Big(\sup_{s>t-\tau_\epsilon}\int_{|x|>s\log s}|u(s,x)|^{p+1}\;dx\Big)^\frac{3-p}{3(p-1)}<\frac{\epsilon}{2}.
\end{equation}
  Furthermore, by Step 2, we derive that there exists $t_0>\max\{t_1+b,2\tau_\epsilon\}$ such that
  \begin{equation}\label{equ:lsmscal}
    L\tau_\epsilon^{\frac{2(p-1)(3-p)}{(p+1)(3p-1)}}
  \Big(\int_{t_0-2\tau_\epsilon}^{t_0}\int_{|x|\leq s\log s}|u(s,x)|^{p+1}\;dx\;ds\Big)^\frac{3-p}{3(p-1)}<\frac{\epsilon}{4}.
  \end{equation}
Noting that for $t\in [t_0-b,t_0]$ and $\tau_\epsilon>b,$ we have
$$[t-\tau_\epsilon,t]\subset[t-b,t]\subset [t_0-2\tau_\epsilon,t_0].$$
  Thus,
\begin{equation}\label{equ:smalscales}
  L\tau_\epsilon^{\frac{2(p-1)(3-p)}{(p+1)(3p-1)}}
  \Big(\int_{t-\tau_\epsilon}^t\int_{|x|\leq s\log s}|u(s,x)|^{p+1}\;dx\;ds\Big)^\frac{3-p}{3(p-1)}<\frac{\epsilon}{4}.
\end{equation}
  Plugging \eqref{equ:ktauessm}, \eqref{equ:vtlta} and \eqref{equ:smalscales}  into \eqref{equ:utestex}, we obtain the claim \eqref{equ:t0clsoe}.

{\bf Step 4:} Now we turn to show \eqref{equ:poedecay}. It is equivalent to show that for any $\epsilon>0$, there exists $T_\epsilon>0$ such that for any $t>T_\epsilon$
\begin{equation}\label{equ:podelim}
  \|u(t,x)\|_{L_x^{p+1}}<\epsilon.
\end{equation}
From Step 3, we know that for any $t>\tau>0$
\begin{equation}\label{equ:threstm}
  \|u(t,x)\|_{L_x^{p+1}}\leq \|v(t)\|_{L_x^{p+1}}+K\tau^{-\frac{p-1}{p+1}}+\|z(t,\tau)\|_{L_x^{p+1}}.
\end{equation}
For any $\epsilon>0$, we take $\tau_\epsilon>0$ such that
\begin{equation}\label{equ:tauepscho}
  K\tau_\epsilon^{-\frac{p-1}{p+1}}=\frac{\epsilon}{4}.
\end{equation}
On the other hand, by \eqref{equ:dispde}, we can choose $t_1>\tau_\epsilon>0$ such that
\begin{equation}\label{equ:vteps}
  \|v(t)\|_{L_x^{p+1}}<\frac{\epsilon}{4},\quad \forall\;t>t_1.
\end{equation}
Thus,
\begin{equation}\label{equ:utl4}
  \|u(t,x)\|_{L_x^{p+1}}\leq\frac{\epsilon}{2}+\|z(t,\tau_\epsilon)\|_{L_x^{p+1}},\quad \forall\;t>t_1.
\end{equation}
Using the dispersive estimate, we get
\begin{align}\nonumber
 \|z(t,\tau_\epsilon)\|_{L_x^{p+1}}\lesssim & \int_{t-\tau_\epsilon}^t (t-s)^{-\frac{2(p-1)}{p+1}}\|u(s)\|_{L_x^{p+1}}^p\;ds\\\label{equ:ztest}
 \leq& M\tau_\epsilon^{\frac{3-p}{p+1}}\sup_{s\in[t-\tau_\epsilon,t]}\|u(s)\|_{L_x^{p+1}}^p
\end{align}
for some $M>0$.
Hence,
\begin{equation}\label{equ:utestlp}
   \|u(t,x)\|_{L_x^{p+1}}\leq\frac{\epsilon}{2}+M\tau_\epsilon^{\frac{3-p}{p+1}}\sup_{s\in[t-\tau_\epsilon,t]}\|u(s)\|_{L_x^{p+1}}^p,\quad \forall\;t>t_1.
\end{equation}

On the other hand, applying \eqref{equ:t0clsoe} with $b=\tau_\epsilon$, we derive that there exists $t_0\geq\tau_\epsilon$ such that
\begin{equation}\label{equ:utsmal}
  \sup_{t\in[t_0-\tau_\epsilon,t_0]}\|u(t)\|_{L_x^{p+1}}< \epsilon.
\end{equation}

Now, we utilize the bootstrap argument to prove \eqref{equ:podelim}. To do this, we define
\begin{equation}\label{equ:defteps}
  t_\epsilon:=\sup\{t\geq t_0:\;\|u(s)\|_{L_x^{p+1}}< \epsilon,\quad\forall\;s\in[t_0-\tau_\epsilon,t]\}.
\end{equation}
Then, it is equivalent to show that $t_\epsilon=+\infty.$ By contradiction, we assume that $t_\epsilon<+\infty$. Then,
\begin{equation}\label{equ:tepsd}
  \|u(t_\epsilon)\|_{L_x^{p+1}}=\epsilon.
\end{equation}
Using \eqref{equ:utl4} with $t=t_\epsilon$, we obtain
$$\epsilon\leq \frac{\epsilon}{2}+M\tau_\epsilon^{\frac{3-p}{p+1}}\epsilon^p\Longrightarrow\;\tau_\epsilon^{\frac{3-p}{p+1}}\epsilon^{p-1}\geq\frac{1}{2M}.$$
This together with \eqref{equ:tauepscho} implies that
$$\tau_\epsilon^{\frac{3-p}{p+1}}(4K\tau_\epsilon^{-\frac{p-1}{p+1}})^{p-1}\geq\frac{1}{2M}\Longrightarrow\;\tau_\epsilon^{\frac{p^2-p-2}{p+1}}
\leq 2M(4K)^{p-1}$$
 which contradicts with the fact that
$$\lim_{\epsilon\to0}\tau_\epsilon=+\infty.$$
Therefore, we conclude the proof of Proposition \ref{prop:pedecay}.

\end{proof}

\subsection{Global space-time bound}

In this subsection, we utilize the decay of potential energy in Proposition \ref{prop:pedecay} and  the continuous argument to show the global space-time bound for the global solution.

\begin{proposition}\label{prop:glostb}
Let $u:\;\R\times\R^4\to \C$ be the global solution to \eqref{equ:nlsab4}. Then, there holds
\begin{equation}\label{equ:globaspse}
  \| u(t,x)\|_{L_t^qH^{1,r}_{\A}(\R\times\R^4)}<+\infty,
\end{equation}
for any $(q,r)\in\Lambda_0^S$. As a consequence, the solution $u(t,x)$ scatters.
\end{proposition}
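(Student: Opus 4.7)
The plan is to run a continuity/bootstrap argument over time intervals on which the potential energy is small, which by Proposition~\ref{prop:pedecay} exist near $\pm\infty$. Fix the Schr\"odinger-admissible pair $(q_0, r_0) := \bigl(\tfrac{p+1}{p-1}, p+1\bigr) \in \Lambda_0^S$ (admissibility: $\tfrac{2}{q_0} = \tfrac{2(p-1)}{p+1} = 4\bigl(\tfrac12 - \tfrac{1}{p+1}\bigr)$), and for any interval $I \subset \R$ set
\[
Y(I) := \|u\|_{L_t^{q_0} L_x^{r_0}(I)} + \|\nabla_{\A} u\|_{L_t^{q_0} L_x^{r_0}(I)}, \qquad \eta(I) := \sup_{t \in I}\|u(t)\|_{L_x^{p+1}(\R^4)}.
\]
The energy bound \eqref{mecl} guarantees $\sup_{t\in\R}\|u(t)\|_{H^1_{\A}} \lesssim_{u_0} 1$.

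The first step is to establish the nonlinear estimate
\[
\bigl\||u|^{p-1}u\bigr\|_{L_t^{q_0'}L_x^{r_0'}(I)} + \bigl\|\nabla_{\A}(|u|^{p-1}u)\bigr\|_{L_t^{q_0'}L_x^{r_0'}(I)} \leq C\, \eta(I)^{\alpha}\, Y(I)^{2/(p-1)},
\]
with $\alpha := (p-2)(p+1)/(p-1) > 0$. This is obtained by combining the pointwise bound $|\nabla_{\A}(|u|^{p-1} u)| \lesssim |u|^{p-1}|\nabla_{\A} u|$ (a special case of \eqref{chain:LA}), H\"older in $x$ with exponents $\tfrac{p+1}{p-1}$ and $p+1$, and the interpolation $\|u\|_{L_t^{a} L_x^{p+1}} \leq \eta(I)^{1 - q_0/a}\,\|u\|_{L_t^{q_0}L_x^{r_0}}^{q_0/a}$ in time with $a := (p-1)(p+1)/(3-p)$. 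The arithmetic is arranged so that the small factor $\eta^{\alpha}$ splits off with a strictly positive power (needing $p > 2$) while the residual $Y$-power equals $2/(p-1) > 1$ (needing $p < 3$). Plugging this into Strichartz (\eqref{str1} and \eqref{instr}) applied to the Duhamel formula on $I = [t_0, t_1]$ yields the central inequality
\[
Y(I) \leq C\,\|u(t_0)\|_{H^1_{\A}} + C\, \eta(I)^{\alpha}\, Y(I)^{2/(p-1)}.
\]

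By Proposition~\ref{prop:pedecay}, for every $\varepsilon > 0$ there exists $T_0 > 0$ with $\eta([T_0, \infty)) < \varepsilon$. Choosing $\varepsilon$ small enough that $C\varepsilon^{\alpha}\bigl(2C\|u(T_0)\|_{H^1_{\A}}\bigr)^{2/(p-1)-1} < \tfrac12$ and running the standard continuity argument on $t_1 \mapsto Y([T_0, t_1])$ (which is continuous in $t_1$ and small for $t_1 \downarrow T_0$) produces the uniform bound $Y([T_0, \infty)) \leq 2C\|u(T_0)\|_{H^1_{\A}}$. The symmetric reasoning gives $Y((-\infty, -T_0]) < \infty$, while $Y([-T_0, T_0]) < \infty$ follows by iterating Proposition~\ref{localwell} finitely many times on a partition of $[-T_0, T_0]$ whose mesh depends only on the uniform $H^1_{\A}$ bound. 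Patching yields $Y(\R) < \infty$, and extending to a general $(q,r) \in \Lambda_0^S$ is one further invocation of \eqref{str1}--\eqref{instr} with the nonlinearity in $L_t^{q_0'}L_x^{r_0'}$, producing \eqref{equ:globaspse}. Scattering then follows in standard fashion: setting $u_{+} := u_0 - i\int_0^{\infty}e^{-is\LL_{\A}}(|u|^{p-1}u)(s)\,ds$, the dual Strichartz estimate and \eqref{equ:globaspse} give $u_{+} \in H^1_{\A}(\R^4)$, and
\[
\|u(t) - e^{it\LL_{\A}}u_{+}\|_{H^1_{\A}} \leq C\, \bigl\|\nabla_{\A}(|u|^{p-1}u)\bigr\|_{L_t^{q_0'}L_x^{r_0'}([t,\infty))} \longrightarrow 0 \quad (t \to +\infty),
\]
with the symmetric statement at $-\infty$.

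The main obstacle is organizing the time-H\"older and interpolation in the nonlinear estimate so that $\eta$ appears with a strictly positive power while the residual $Y$-power exceeds $1$; this delicate balance is exactly what makes the range $2 < p < 3$ the right one, since at $p = 2$ one loses the $\eta$-smallness ($\alpha = 0$) and at $p = 3$ the residual power degenerates to $1$ so that the bootstrap collapses. Once this balanced estimate is in place, the remainder of the argument proceeds along classical Ginibre--Velo lines.
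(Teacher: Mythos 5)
Your argument is correct and is essentially the paper's own proof: you work with the same admissible pair $(\tfrac{p+1}{p-1},p+1)$, derive the same nonlinear estimate with smallness factor $\sup_t\|u(t)\|_{L^{p+1}_x}^{\frac{(p-2)(p+1)}{p-1}}$ and residual power $Y^{\frac{2}{p-1}}$, close it by the same continuity/bootstrap argument on a tail interval supplied by Proposition \ref{prop:pedecay}, and then upgrade to all of $\Lambda_0^S$ by one more application of Strichartz. The only differences are that you spell out the negative-time direction, the compact middle interval, and the construction of the scattering states, all of which the paper leaves implicit.
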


\begin{proof}
We first utilize Proposition \ref{prop:pedecay} and continuous argument to prove
\begin{equation}\label{equ:l4ewfs}
  \|u\|_{L_t^\gamma([0,\infty), H_{\bf A}^{1,\rho}(\R^4))}<+\infty,\;(\gamma,\rho)=\big(\tfrac{p+1}{p-1},p+1\big).
\end{equation}
Using local well-posed theory, we get for any $T>0$,
$$\|u\|_{L_t^\gamma([0,T], H_{\A}^{1,\rho}(\R^4))}\leq C\big(T,M(u_0),E(u_0)\big).$$
Thus, we only need to show for some $T>0$
\begin{equation}\label{equ:ulthsa}
  \|u\|_{L_t^\gamma([T,\infty), H_{\A}^{1,\rho}(\R^4))}<+\infty.
\end{equation}
For $t>T>0$, we have by Duhamel's formula
$$u(t)=e^{i(t-T)\LL_{\A}}u(T)-i\int_T^t e^{i(t-s)\LL_{\A}}(|u|^{p-1}u)(s)\;ds.$$
By Strichartz's estimate and Sobolev embedding, we get
\begin{align*}
  \|u\|_{L_t^\gamma([T,\infty), H_{\A}^{1,\rho}(\R^4))}\leq& C\|u\|_{H^1_{\A}}+C\big\||u|^{p-1}u\big\|_{L_t^{\gamma'}([T,\infty),H_{\A}^{1,\rho'})}\\
  \leq& C\|u\|_{H^1_{\A}}+C\|u\|_{L_t^\gamma([T,\infty), H_{\A}^{1,\rho}(\R^3))}\|u\|_{L_t^\gamma L_x^\rho}^{\frac{3-p}{p-1}}\sup_{t\geq T}\|u(t)\|_{L_x^{p+1}}^{\frac{p^2-p-2}{p-1}}\\
  \leq& C\|u\|_{H^1_{\A}}+C\sup_{t\geq T}\|u(t)\|_{L_x^{p+1}}^{\frac{p^2-p-2}{p-1}}\|u\|_{L_t^\gamma([T,\infty), H_{\A}^{1,\rho}(\R^4))}^\frac2{p-1}\\
  \triangleq&C\|u\|_{H^1_{\A}}+C\epsilon(T)\|u\|_{L_t^\gamma([T,\infty), H_{\A}^{1,\rho}(\R^4))}^\frac2{p-1}.
\end{align*}
From Proposition \ref{prop:pedecay}, we know that
$$\lim_{T\to\infty}\epsilon(T)=0.$$
This together with the continuous argument yields \eqref{equ:ulthsa}. And so \eqref{equ:l4ewfs} follows.

Furthermore, using  the estimate
$$\big\||u|^{p-1}u\big\|_{L_t^{\gamma'} H_{\A}^{1,\rho'}}\lesssim\|u\|_{L_t^\infty H_{\A}^1}^{\frac{p^2-p-2}{p-1}}\|u\|_{L_t^\gamma H_{\A}^{1,\rho}}^\frac2{p-1},$$ and Strichartz estimate,
one can now deduce that $u\in L_t^qH^{1,r}_{\A}$ for any admissible $(q,r)\in\Lambda_0^S$.

\end{proof}

{\bf Acknowledgements} The authors would like to express their great gratitude to Z. Zhao for his helpful
discussions.

\begin{center}

\end{center}
\end{document}